\documentclass{amsart}
%reqno is for making equation numbers appear on the right side

\voffset=5mm
\oddsidemargin=5pt \evensidemargin=5pt
\headheight=9pt     \topmargin=-24pt
\textheight=655pt   \textwidth=463.pt

\usepackage{amsmath,amsfonts,amssymb,amsthm,mathtools}

\usepackage{verbatim}
\usepackage{amsmath,amsfonts}
\usepackage[mathscr]{euscript} % make a nice \Net
\usepackage{amsthm}
\usepackage{url}

\usepackage{graphicx} %to include figure

\usepackage{enumitem} %to use nice enumeration with changed indent

\usepackage{hyperref} % to make links for lemmas and theorems
\hypersetup{colorlinks} %so the links look nicer

\usepackage[colorinlistoftodos]{todonotes}

\RequirePackage{cleveref}
\usepackage{hypcap}
\hypersetup{colorlinks=true, citecolor=darkblue, linkcolor=darkblue}
\definecolor{darkblue}{rgb}{0.0,0,0.7}
\newcommand{\darkblue}{\color{darkblue}}
\newcommand{\defn}[1]{\emph{\darkblue #1}}

% \usepackage{lipsum} % to insert latin gibberish

% \usepackage{graphicx}
%\usepackage{tikz}
%\usetikzlibrary{arrows.meta} %to draw bigger arrows

%\usepackage{standalone} %to externalize tikz load

%global options for enumerate
\setlist[enumerate]{
	label=\textnormal{({\roman*})},
	ref={\roman*}}

%to make the theorem name bold
\makeatletter
\def\th@plain{%
	\thm@notefont{}% same as heading font
	\itshape % body font
}
\def\th@definition{%
	\thm@notefont{}% same as heading font
	\normalfont % body font
}
\makeatother

%%%%%%%%%%%%%%Copying \lescc from \fdsymbol

\makeatletter
\def\fdsy@scale{1}
\newcommand\fdsy@mweight@normal{Book}
\newcommand\fdsy@mweight@small{Book}
\newcommand\fdsy@bweight@normal{Medium}
\newcommand\fdsy@bweight@small{Medium}

\DeclareFontFamily{U}{FdSymbolB}{}
\DeclareFontShape{U}{FdSymbolB}{m}{n}{
	<-7.1> s * [\fdsy@scale] FdSymbolB-\fdsy@mweight@small
	<7.1-> s * [\fdsy@scale] FdSymbolB-\fdsy@mweight@normal
}{}
\DeclareFontShape{U}{FdSymbolB}{b}{n}{
	<-7.1> s * [\fdsy@scale] FdSymbolB-\fdsy@bweight@small
	<7.1-> s * [\fdsy@scale] FdSymbolB-\fdsy@bweight@normal
}{}

\makeatother

\DeclareSymbolFont{fdrelations}{U}{FdSymbolB}{m}{n}
\SetSymbolFont{fdrelations}{bold}{U}{FdSymbolB}{b}{n}

\DeclareMathSymbol{\lescc}{\mathrel}{fdrelations}{66}
%%%%%%%%%%%%%%%%%

%\linespread{1.5}

\newtheorem{thm}{Theorem}[section]
\newtheorem{lemma}[thm]{Lemma}

\newtheorem{cor}[thm]{Corollary}

\newtheorem{conj}[thm]{Conjecture}

% Theorem environments with roman or slanted font
\theoremstyle{definition}

\newtheorem{rem}[thm]{Remark}
\newtheorem{definition}[thm]{Definition}

\numberwithin{figure}{section}
\numberwithin{equation}{section}

%%%%%%%%%%%%% shortcuts and macros

\def\wh{\widehat}
\def\bu{\bullet}
\def\emp{\nothing}

\def\zz{\mathbb Z}
\def\nn{\mathbb N}

\def\pp{\mathbb P}

\def\sm{\smallsetminus}

\def\Ga{R}

\def\ga{\gamma}

\def\de{U}

\def\al{\alpha}

\def\ve{\varepsilon}

\def\vk{\varkappa}

\def\ssu{\subset}

\def\<{\langle}
\def\>{\rangle}

\def\Ups{S}
\def\ups{s}
\def\gar{r}

\def\0{{\mathbf 0}}

\def\nothing{\varnothing}

\def\.{\hskip.06cm}
\def\ts{\hskip.03cm}

\def\nin{\noindent}

\def\SP{{\textsc{\#P}}}

\def\Pb{{\text{\bf P}}}

\def\aF{\textrm{F}}
\def\aFr{\textrm{\em F}}
\def\aG{\textrm{G}}

\def\aH{\textrm{H}}

\def\aK{\textrm{K}}
\def\aKr{\textrm{\em K}}
\def\aR{\textrm{R}}
\def\aRr{\textrm{\em R}}

\def\bF{\textbf{\textit{F}}\hskip-0.03cm{}}

\def\bN{\textbf{\textit{N}}\hskip-0.05cm{}}

\def\bG{\textbf{\textit{G}}\hskip-0.03cm{}}
\def\bH{\textbf{\textit{H}}\hskip-0.03cm{}}

\def\bA{\textbf{\textit{A}}\hskip-0.03cm{}}
\def\bB{\textbf{\textit{B}}\hskip-0.03cm{}}
\def\bC{\textbf{\textit{C}}\hskip-0.03cm{}}

\def\ana{\emph{\textsf{a}}}
\def\bnb{\emph{\textsf{b}}}

\def\dndt{{\textsf{d}}}
\def\Lc{L_\circ}

\def\A{A}
\def\B{B}
\def\C{C}
\def\D{D}
\def\E{E}

\def\aAr{\mathrm{A}}
\def\aBr{\mathrm{B}}
\def\aCr{\mathrm{C}}

%%%%%%%%%%%%%%%%%%%%%%%%%%%%%%%%%%%%

%%%%%%%%%%%%%%%%%%%%%%%%%%%%% Macros by Swee Hong
%\DeclareMathOperator{\A}{A} % vertex
%\DeclareMathOperator{\B}{B} % vertex
%\DeclareMathOperator{\C}{C} % vertex
%\DeclareMathOperator{\D}{D} % vertex
%\DeclareMathOperator{\E}{E} % vertex
\DeclareMathOperator{\ab}{\mathbf{a}} % number of elements with a fixed rank
 % Minimum elements of poset of order ideals
 % Admissible sets
 % partial tiered events
 % bolded alpha
 % partial tiered events

\DeclareMathOperator{\bb}{\mathbf{b}}
\DeclareMathOperator{\abr}{a}
\DeclareMathOperator{\bbr}{b}

\DeclareMathOperator{\bAr}{A}
\DeclareMathOperator{\bBr}{B}

 % special set of events
 % characteristic vector
\DeclareMathOperator{\Cen}{\textnormal{C}} % Centers of allowed regions
\DeclareMathOperator{\Cendown}{\textnormal{C}_{\textnormal{down}}} % Centers of allowed regions
\DeclareMathOperator{\Cenup}{\textnormal{C}_{\textnormal{up}}} % Centers of allowed regions
\DeclareMathOperator{\eb}{\mathbf{e}} % standard basis vector
 %The set of partial linear extensions
\DeclareMathOperator{\down}{\textnormal{down}} % the downset of a vector
\DeclareMathOperator{\Ec}{\mathcal{E}} %The set of all linear extensions
 %The set of all partial linear extensions
 % Cross-ratio partial order

 %column of the f vector
\DeclareMathOperator{\Fc}{\mathcal{F}} %polynomial
 % forbidden subsets
\def\Forbdown{F_{\textnormal{down}}} % forbidden regions down
\def\Forbup{F_{\textnormal{up}}} % Centers of allowed regions
 %forward operation
\DeclareMathOperator{\gb}{\mathbf{g}} %characteristic vector
\DeclareMathOperator{\Gc}{\mathcal{G}} %polynomial
\DeclareMathOperator{\GCP}{\textnormal{GCP}} %polynomial
 % characteristic vector
\DeclareMathOperator{\Hc}{\mathcal{H}} % characteristic vector
\DeclareMathOperator{\HCP}{\textnormal{HCP}} %polynomial
 % set of lattice paths
\DeclareMathOperator{\inc}{\textnormal{inc}} % incomparable index
 % set of lattice paths

\newcommand{\Kc}{\mathcal{K}} %polynomial
 % Kreweras posets
%\DeclareMathOperator{\Lc}{\mathscr{L}} % Linear extensions of parallel composition of two chains

\DeclareMathOperator{\lessr}{\text{less}}
 % partial linear extensions
 % partial linear extensions
 % characteristic matrix
 % nondegenerate relations
\newcommand{\n}{n} %size of poset
 % Order polytopes
%\newcommand{\Nl}[1]{\overline{N_{#1}}} % partial linear extensions
 % vector with value 1 at the first entry 0 everything else
\DeclareMathOperator{\one}{\mathbf{1}} % vector with value 1 at the first entry 0 everything else
 % small probability vector
 % parallel composition of two chains
 % partial linear extensions
\DeclareMathOperator{\qb}{\mathbf{q}} % small probability vector
\DeclareMathOperator{\qbr}{q} % small probability vector

\DeclareMathOperator{\Rb}{\mathbb{R}} %Real numbers

\DeclareMathOperator{\rbb}{\mathbf{r}} %Real vectors
\DeclareMathOperator{\rbbt}{r} %Real vectors

\DeclareMathOperator{\Reg}{\textnormal{Reg}} % Region of paths being allowed

\def\precc{\prec}
\def\succc{\succ}

%\DeclareMathOperator{\precCP}{\preccurlyeq_{\textnormal{CP}}} % Cross--product relation
%\DeclareMathOperator{\succp}{\succcurlyeq_{\textnormal{CP}}} % Cross--product relation

%\lescc
\def\precCP{\lescc}

\DeclareMathOperator{\supp}{\textnormal{supp}} % support of a vector
 % Szemeredi parallel symbol

\DeclareMathOperator{\ub}{\mathbf{u}} % vector u
\def\rub{\mathrm{u}}

\DeclareMathOperator{\up}{\textnormal{up}} % the upset of a vector

\DeclareMathOperator{\vb}{\mathbf{v}} % vector
\DeclareMathOperator{\vbr}{{v}} % vector

\DeclareMathOperator{\wb}{\mathbf{w}} % vector
\DeclareMathOperator{\wbr}{{w}} % vector

\DeclareMathOperator{\wgt}{\mathtt{wt}} % weight function
 % a vector
 % parallel composition of two chains
%\DeclareMathOperator{\xsf}{x} % special elements in the poset
 % a vector
 % a vector
\DeclareMathOperator{\Zb}{\mathbb{Z}} % Integers
\DeclareMathOperator{\zero}{\mathbf{0}} % zero vector

\DeclareMathOperator{\gbr}{g} % weight function

\def\xsf{x}
\def\Cr{\mathcal{C}}
\def\Yu{{Y^{\<u\>}}}
\def\Yt{{Y^{\<t\>}}}

\def\Yw{{Y^{\<w\>}}}
%\def\GG{\textbf{G}}

%%%%%%%%%%%%%%%%%%%%%%%%%%%%%%%% Macros by GP
%\def\eone{\bf{\ts\textit{e_1}}}
%\def\etwo{\bf{\ts\textit{e_2}}}

\def\eone{\textbf{\ts\textrm{e}$_1$}}
\def\etwo{\textbf{\ts\textrm{e}$_2$}}

%%%%%%%%%%%%%%%%%%%%%%%%%%%%%%%%%%%

\title{The cross--product conjecture for width two posets}
\date{\today}

\author{Swee Hong Chan}
\address[Swee Hong Chan]{Department of Mathematics, UCLA,  Los Angeles, CA 90095.}
\email{\texttt{sweehong@math.ucla.edu}}

\author[\ts Igor Pak]{Igor Pak}
\address[Igor Pak]{Department of Mathematics, UCLA,  Los Angeles, CA 90095.}
\email{\texttt{pak@math.ucla.edu}}

\author[\ts Greta Panova]{Greta Panova}
\address[Greta Panova]{Department of Mathematics, USC,  Los Angeles, CA 90089.}
\email{\texttt{gpanova@usc.edu}}

%   \thanks{Department of Mathematics, University of California, Los Angeles.}

%%%%%%%%%%%%%%%Commands for making comments and to do lists

\begin{document}
	
%\keywords{}
%\subjclass[2010]{}

\begin{abstract}
The \emph{cross--product conjecture} (CPC) of Brightwell, Felsner and
Trotter (1995) is a two-parameter quadratic inequality for the number of
linear extensions of a poset $P= (X, \prec)$ with given value differences
on three distinct elements in~$X$.  We give two different proofs
of this inequality for posets of width two.  The first proof is
algebraic and generalizes CPC to a four-parameter family.
The second proof is combinatorial and extends CPC to a $q$-analogue.
Further applications include relationships between CPC and other
poset inequalities, and the equality part
of the~CPC for posets of width two.
\end{abstract}

\keywords{Linear extensions of posets, cross--product conjecture, $1/3$--$2/3$ conjecture,
Stanley inequality, Kahn--Saks inequality, Graham--Yao--Yao inequality,
$XYZ$ inequality,
log--concavity, lattice path, Lindstr\"om--Gessel--Viennot lemma, $q$-analogue.}
\subjclass[2020]{Primary: 05A20, \. Secondary: 05A30, 06A07, 06A11}

\maketitle

\section{Introduction}\label{s:intro}

% \medskip
%
% \subsection{Main results} \label{ss:intro-main}
Among combinatorial objects, \emph{linear extensions of posets} occupy
a remarkable middle ground between chaos and order.  Posets themselves
come in a variety of shapes and sizes, with applications to many
different areas of mathematics and other sciences.  Consequently,
 linear extensions can also seem unwieldy, and counting them
is known to be computationally intractable (see~$\S$\ref{ss:finrem-hist}).
And yet, there are many positive results for the number of linear
extensions in some special cases, including product and determinant
formulas, polynomial time dynamic programming and approximation
algorithms via Markov chains.

In this paper, we prove several new inequalities between the numbers
of linear extensions for the important special case of posets of width two.
Notably, we resolve the \emph{cross--product conjecture} in this case
and generalize it.  We also show that this generalization is extremely
powerful as it implies a number of (known) results, thus uniting them
under one roof.

\smallskip

\subsection{Classical poset inequalities} \label{ss:intro-classical}
Throughout the paper, let \ts $P=(X,\prec)$ \ts be a finite poset.
A \defn{linear extension} of $P$ is a bijection \. $L: X \to [n]$, such that
\. $L(x) < L(y)$ \. for all \. $x \prec y$.
Let \ts $\Ec(P)$ \ts be the set of linear extensions of~$P$, and
let \ts $e(P):=|\Ec(P)|$. Much of research in the area is motivated by the following:

\medskip

\begin{conj}[{\rm {\. \defn{$\frac{1}{3}-\frac{2}{3}$~conjecture}~\cite{Kis,Fre}}}]
\label{conj:1323}
In every finite poset \. $P=(X,\prec)$ that is not totally ordered, there are two distinct elements \ts $x,y\in X$,
such that
$$\frac13 \, \le \,
\frac{\bigl|\bigl\{L \in \Ec(P)~:~L(x)<L(y)\bigr\}\bigr|}{e(P)} \, \le \, \frac23\,.
$$\end{conj}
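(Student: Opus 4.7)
Since the $1/3$--$2/3$ conjecture is notoriously open, my plan is to sketch a line of attack rather than claim a complete resolution. The basic strategy is to identify a pair of incomparable elements whose order in a uniformly random linear extension is as balanced as possible, and to argue that this balance must fall in $[1/3,\ts 2/3]$.

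First I would single out a natural candidate pair. A classical choice is an extremal incomparable pair $(x,y)$ that minimizes $\bigl|\Pr[L(x)<L(y)] - 1/2\bigr|$; the task then reduces to showing that this minimum is always at most $1/6$. An alternative is to focus on cover relations or on pairs lying on a maximum antichain, where swap arguments immediately give one-sided bounds of the form $\Pr[L(x)<L(y)] \ge 1/2$ and reduce the remaining work to controlling the upper tail.

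The heart of any attack should be a correlation inequality. The best general bound $(5-\sqrt{5})/10 \approx 0.276$ of Brightwell--Felsner--Trotter was obtained by combining Shepp's XYZ inequality with a careful extremal analysis. I would try to feed in stronger quadratic inequalities such as the cross--product inequality, which controls joint probabilities of the form $\Pr[L(x)-L(y)=a,\ts L(u)-L(v)=b]$; log-concavity of the profile of $L(x)-L(y)$ should, via a convexity argument, force a balanced pair to exist provided one can couple it to the right choice of elements.

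For the special case of width-two posets that is the focus of this paper, the strategy becomes much more concrete. Here $P = C_1 \sqcup C_2$ splits as two chains, $\Ec(P)$ is encoded by monotone lattice paths in a rectangle avoiding a forbidden region, and $\Pr[L(x)<L(y)]$ becomes a ratio of path counts amenable to the Lindstr\"om--Gessel--Viennot lemma. A pigeonhole applied to the ratios associated with consecutive cover relations inside the two chains should produce a pair with ratio in $[1/3,\ts 2/3]$, essentially recovering Linial's route. The main obstacle, already in width two, is matching the sharp constant $1/3$ rather than some weaker number; this is precisely where the full strength of CPC (proved later in this paper) seems needed, since softer correlation tools only deliver sub-optimal constants.
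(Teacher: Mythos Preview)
The statement you are addressing is a \emph{conjecture}, not a theorem, and the paper does not prove it; it is stated purely as background and motivation. There is thus no ``paper's own proof'' to compare against, and you yourself concede in your first sentence that you are not offering a proof. So as a proof proposal this is a non-starter by design.

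A couple of substantive remarks on the sketch itself. First, your closing claim that ``the full strength of CPC \dots seems needed'' to get the sharp constant $1/3$ for width-two posets is incorrect: Linial~\cite{Lin} established the width-two case in 1984 by an elementary argument, long before CPC was formulated, and the paper explicitly cites this. Second, the paper's own discussion in \S\ref{ss:finrem-1323} points in the opposite direction from your suggestion, noting that ``there seem to be evidence that the $\frac13$--$\frac23$ conjecture is unattainable by means of general poset inequalities'' such as CPC; the Kahn--Saks and BFT arguments top out at $3/11$ and $(5-\sqrt5)/10$ respectively, and no one has extracted $1/3$ from this route. So the program you outline --- feed in stronger quadratic inequalities and hope the constant improves to $1/3$ --- is exactly the approach that has stalled, not a promising new line.
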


\medskip

While open in full generality, the conjecture is proved in several other
special cases (see~$\S$\ref{ss:finrem-1323}).
Notably, it was proved by Linial~\cite{Lin} for
posets of width two, where the conjecture is tight.
For general posets, a breakthrough was made by Kahn and
Saks~\cite{KS} who showed a slightly weaker \. $\frac3{11}-\frac8{11}$ \. version
of the conjecture by using the following remarkable inequality.

\medskip

\begin{thm}[{\cite[Thm~2.5]{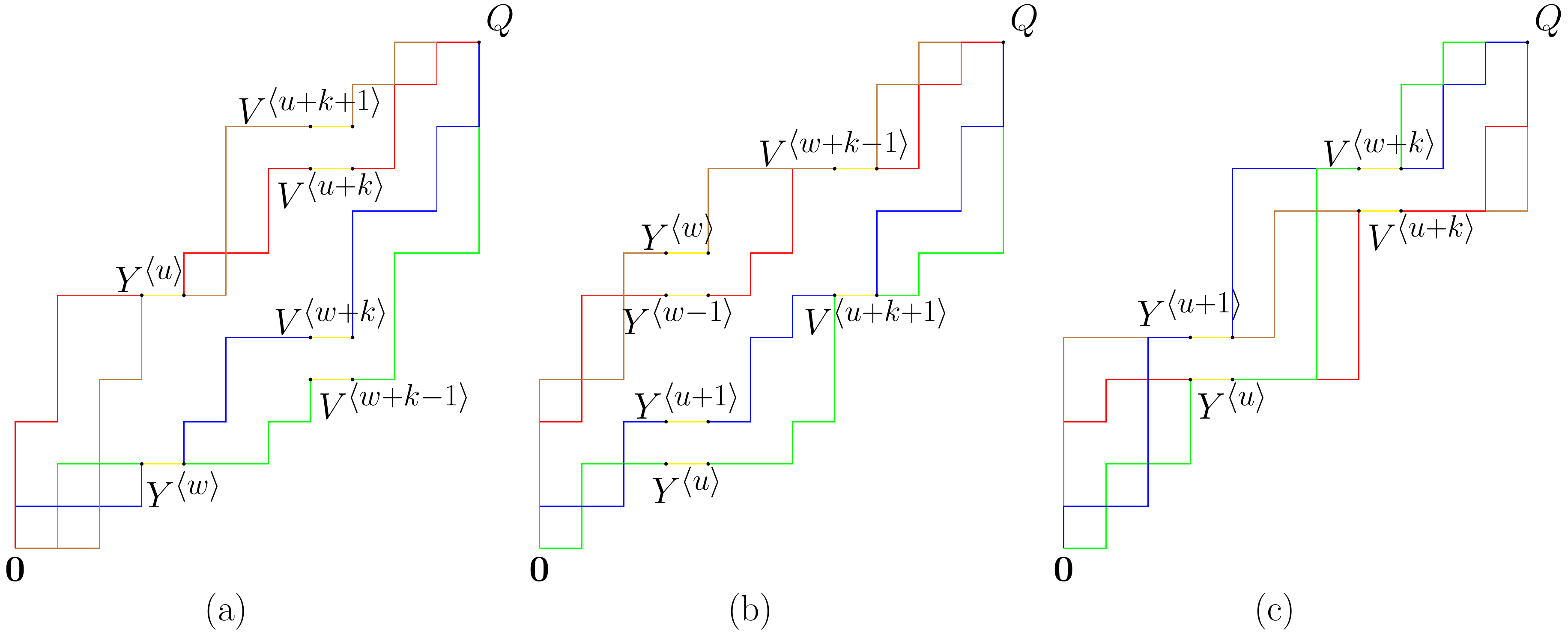}}]\label{t:KS}
Let \ts $x,y\in X$ \ts be distinct elements of a finite poset \ts $P=(X,\prec)$.
Denote by \ts $\aFr(k)$ \ts the number of linear extensions \ts $L\in \Ec(P)$,
such that \ts $L(y)-L(x)=k$.  Then:
\begin{equation}\label{eq:KS-ineq}
\aFr(k)^2 \,\. \ge \,\. \aFr(k-1) \,\. \aFr(k+1) \quad \text{for all} \quad k\. > \. 1\ts.
\end{equation}
\end{thm}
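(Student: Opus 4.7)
My plan is to deduce the inequality from the Alexandrov--Fenchel inequality for mixed volumes, following the strategy pioneered by Stanley for the univariate log-concavity $N(k)^2 \ge N(k-1) N(k+1)$ and adapted by Kahn and Saks to the two-element setting. First, I pass to a continuous model: set $P' := P \smallsetminus \{x,y\}$, $m := n-2$, and consider the order polytope $\Oc(P) \subset [0,1]^P$, whose chambers are in bijection with $\Ec(P)$ and whose total volume equals $e(P)/n!$. For $0 \le s < t \le 1$, the section $\Oc(P) \cap \{f(x)=s,\. f(y)=t\}$ factors as a product of three order polytopes on the subposets of $P'$ forced into $[0,s]$, $[s,t]$, and $[t,1]$ respectively; let $V(s,t)$ be its $m$-dimensional volume, a piecewise polynomial function of $(s,t)$.

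Second, I realize $F(k)$ as a mixed volume. After integrating $V(s,t)$ along the line $\{t-s = \text{const}\}$ and performing a rational change of variables, the resulting slab becomes a Minkowski sum $\lambda K_- + (1-\lambda) K_+$, where $K_\pm \subset \mathbb{R}^m$ are convex bodies built from order polytopes of appropriate subposets of $P'$ (corresponding to the two extreme configurations in which the pair $(x,y)$ is pushed towards one end or the other of the chain). Expanding via the Steiner formula,
\[
\vol\bigl(\lambda K_- + (1-\lambda) K_+\bigr) \,=\, \sum_{j=0}^{m} \binom{m}{j}\, \lambda^j\,(1-\lambda)^{m-j}\, V\bigl(K_-^{\,j},\, K_+^{\,m-j}\bigr),
\]
and matching coefficients, one identifies $F(k)$ with a constant multiple of $V(K_-^{\,k-1},\, K_+^{\,m-k+1})$.

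Third, I invoke Alexandrov--Fenchel: for any convex bodies $K_-, K_+ \subset \mathbb{R}^m$, the mixed volumes $V_j := V(K_-^{\,j},\, K_+^{\,m-j})$ are log-concave in $j$, i.e.\ $V_j^2 \ge V_{j-1}\, V_{j+1}$. Pulling this back through the identification of Step~2 yields $F(k)^2 \ge F(k-1)\, F(k+1)$ for all $k > 1$. The main obstacle lies in Step~2: producing convex bodies whose mixed volumes literally equal the counts $F(k)$. The three-fold factorisation of $V(s,t)$ and the binomial coefficients arising from the continuous-to-discrete conversion must be absorbed into the Steiner polynomial exactly; this matching is the technical heart of the Kahn--Saks argument, after which the inequality is an immediate corollary of Alexandrov--Fenchel.
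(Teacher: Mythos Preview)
Your sketch follows the original geometric approach of Kahn and Saks via the Alexandrov--Fenchel inequality, which is indeed how the cited result~\cite[Thm~2.5]{KS} is proved. The outline is correct in spirit, though one caution: the slice $\Oc(P)\cap\{f(x)=s,\,f(y)=t\}$ does not in general factor as a product of three order polytopes, since elements of $P'$ incomparable to $x$ or to $y$ are not forced into any one of the intervals $[0,s]$, $[s,t]$, $[t,1]$. The construction of the bodies $K_\pm$ is therefore more delicate than your Step~2 suggests; Kahn and Saks build them explicitly, and as you rightly acknowledge, that matching is the technical crux rather than the application of Alexandrov--Fenchel itself.

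The paper, however, does not reprove Theorem~\ref{t:KS} geometrically. Instead, in~\S\ref{ss:power-KS} it gives an elementary combinatorial reduction from the Cross--Product Conjecture: given $P$ and distinct $x,z\in X$, adjoin a new element $y$ incomparable to all of $X$, forming a poset $Q$ on $X\cup\{y\}$, and observe that $\aF_Q(k,\ell\ts;\ts x,y,z)=\aF_P(k+\ell-1\ts;\ts x,z)$ for all $k,\ell\ge 1$. The cross--product inequality~\eqref{eq:CP-ineq} for $Q$ then collapses directly into $\aF_P(m)^2\ge\aF_P(m-1)\,\aF_P(m+1)$. This argument is only a few lines long and avoids convex geometry entirely, but it is \emph{conditional}: CPC is open in general, and the reduction increases the width of the poset by one, so even the paper's own width-two results do not feed back through it to give an unconditional proof of Kahn--Saks (the paper flags this explicitly in the remark following~\S\ref{ss:power-KS}). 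Your route, by contrast, delivers the full theorem unconditionally, at the cost of invoking Alexandrov--Fenchel.
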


\medskip

In a special case when \ts $x=\wh 0$ \ts is the minimal element, the
\defn{Kahn--Saks inequality}~\eqref{eq:KS-ineq} reduces to the earlier
\emph{Stanley inequality}~\cite[Thm~3.1]{Sta}, see also~$\S$\ref{s:log-Stanley}.
Both Stanley and Kahn--Saks inequalities are proved geometrically, by
using the \emph{Alexandrov--Fenchel inequalities}.

In an effort to improve the constants in the \emph{Kahn--Saks \.
$\frac3{11}-\frac8{11}$ \.  theorem}, Brightwell, Felsner and Trotter
formulated the following \emph{cross--product conjecture} (CPC) generalizing
 Theorem~\ref{t:KS} (see $\S$\ref{ss:finrem-CPC}):

\medskip

\begin{conj}[{\rm \defn{cross--product conjecture}~\cite[Conj.~3.1]{BFT}}]
\label{conj:CP}
Let \ts $x,y,z\in X$ \ts be distinct elements of a finite poset \ts $P=(X,\prec)$.
Denote by \ts \ts $\aFr(k,\ell)$ \ts the number of linear extensions $L\in \Ec(P)$,
such that \ts $L(y)-L(x)=k$ \ts and \ts $L(z)-L(y)=\ell$.
Then:
\begin{equation}\label{eq:CP-ineq}
\aFr(k,\ell) \ \aFr(k+1,\ell+1) \ \le \ \aFr(k,\ell+1) \ \aFr(k+1,\ell)
 \quad \text{for all} \quad k, \ts \ell \. \ge \. 1\ts.
\end{equation}
\end{conj}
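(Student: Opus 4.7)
The plan is to extend the geometric approach that proves the Kahn--Saks inequality (Theorem~\ref{t:KS}) from one parameter to two. Let $\Oc(P)\subset[0,1]^X$ be Stanley's order polytope, so that $e(P)/n!=\vol(\Oc(P))$. By slicing along the hyperplanes $\{t_y-t_x=k/n\}$ and $\{t_z-t_y=\ell/n\}$ one realizes $\aFr(k,\ell)/n!$ as a volume. The first step is to package this volume as a mixed volume $V(K(k),L(\ell),M_3,\dots,M_{n-1})$, where $K(k)$ and $L(\ell)$ are convex bodies depending linearly on $k$ and $\ell$ respectively, built from the order polytopes of $P$ with the relations $x\prec y$ and $y\prec z$ replaced by suitable interpolation constraints. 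This mirrors Stanley's construction but uses two interpolation parameters instead of one.

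With this representation, CPC becomes a $TP_2$-type inequality for the matrix $\bigl(V(K(k),L(\ell),M_3,\dots)\bigr)_{k,\ell\ge 1}$. This is \emph{not} a direct consequence of the Alexandrov--Fenchel inequality, which gives only single-parameter log-concavity (the content of Theorem~\ref{t:KS}). The next step is to prove a bilinear strengthening: apply AF once in each variable, and then combine the two one-parameter inequalities using a Cauchy--Schwarz step that makes the $(k,\ell)$ cross-difference nonnegative. For this to close one must verify that the bodies $K(k)$ and $L(\ell)$ are compatible in the Minkowski sense at fixed $k+\ell$, so that the two AF applications share a common base of $n-2$ auxiliary bodies $M_3,\dots,M_{n-1}$.

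The main obstacle is exactly this bilinear strengthening, which is not part of classical convex geometry and for which no general argument is known. I would first test the plan on posets of width two, where a combinatorial shortcut is available: linear extensions correspond to pairs of non-crossing lattice paths in $\Z^2$, and the Lindstr\"om--Gessel--Viennot lemma reduces $\aFr(k,\ell)$ to a $2\times 2$ determinant of single-path counts; the CPC then reduces to the $TP_2$ property of an explicit binomial-like matrix, which can be verified directly. Lifting this to arbitrary width appears to require either the bilinear AF step above, or a decomposition of general $P$ as a positive combination of width-two contributions compatible with the $(k,\ell)$-grading. The latter naturally motivates introducing an auxiliary statistic, for instance a $q$-parameter tracking inversions, or a four-parameter extension that records $L(x)$ and $L(z)$ separately, designed to refine CPC into a statement whose coefficient-wise positivity is visible one width-two slice at a time.
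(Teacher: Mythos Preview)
Your proposal is a research outline, not a proof, and it contains a genuine gap that you yourself flag: the ``bilinear strengthening'' of Alexandrov--Fenchel. Combining row-wise and column-wise log-concavity via Cauchy--Schwarz does \emph{not} yield the $TP_2$ property. A tiny counterexample: the array $\bigl(\begin{smallmatrix}2&1\\1&2\end{smallmatrix}\bigr)$ is trivially log-concave along each row and column, yet the $2\times 2$ minor is $+3>0$, violating the cross--product sign. So the step ``combine the two one-parameter inequalities using a Cauchy--Schwarz step'' fails as stated; one-variable AF applied twice constrains the wrong minors. This is precisely why CPC is still open for general posets --- the paper does \emph{not} prove Conjecture~\ref{conj:CP} in general, only for width two (Theorem~\ref{t:main}), and the remark in \S\ref{ss:finrem-CPC} explicitly says no combinatorial or geometric proof is known beyond that.

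Your width-two fallback also needs correction. For a width-two poset, a linear extension corresponds to a \emph{single} NE lattice path in a staircase region $\Reg(P)$ (see \S\ref{ss:lattice path interpretation}), not to a pair of non-crossing paths; LGV does not directly produce $\aF(k,\ell)$ as a $2\times2$ determinant in path counts. What the paper actually does is: (i) split each linear extension at $z_2$, writing $\aF(k,\ell)=\sum_t \aG(k,t)\,\aH(\ell,t)$ as a bilinear sum over the position $t=L(z_2)$; (ii) prove by explicit LGV-style injections between \emph{pairs} of paths (Lemmas~\ref{l:lattice path bijection 1}--\ref{l:lattice path bijection 2}) that the $\aG$-factor has nonnegative and the $\aH$-factor nonpositive $2\times2$ minors; (iii) conclude via Cauchy--Binet. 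There is also an independent algebraic proof (Sections~\ref{s:characteristic matrix}--\ref{s:proof of CPC}) writing the relevant matrices as products of elementary matrices with controlled minors. Your instinct that a $q$-refinement helps is sound --- the paper's combinatorial proof is weight-preserving and gives Theorem~\ref{t:main-big-q} --- but neither this nor any ``positive combination of width-two slices'' is known to lift to general width, and the paper does not claim otherwise.
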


\medskip

As a motivation, the authors proved the \defn{cross--product
inequality}~\eqref{eq:CP-ineq} for \ts $k=\ell=1$ \ts \cite[Thm~3.2]{BFT}.
Their proof was based on the classical \emph{Ahlswede--Daykin Four Functions
Theorem} (see e.g.~\cite[$\S$6.1]{AS}).  The authors lamented:  ``something
more powerful seems to be needed'' to prove the general form
of~\eqref{eq:CP-ineq}.

\medskip

\subsection{New results} \label{ss:intro-new}
Here is the central result of this paper:

\medskip

\begin{thm}  \label{t:main}
The Cross--Product Conjecture~\ref{conj:CP} holds for all posets
of width two.
\end{thm}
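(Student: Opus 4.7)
The plan is to use the lattice-path model for linear extensions of width-two posets. By Dilworth's theorem, $P = (X, \prec)$ decomposes into two chains $A = (a_1 \prec \cdots \prec a_p)$ and $B = (b_1 \prec \cdots \prec b_q)$, and linear extensions biject with monotone lattice paths from $(0, 0)$ to $(p, q)$ whose east (resp.\ north) steps correspond to picking elements of $A$ (resp.\ $B$); the cross-relations $a_i \prec b_j$ and $b_j \prec a_i$ carve out a staircase-shaped forbidden region that valid paths must avoid. Fixing the values $L(x), L(y), L(z)$ of the three designated elements forces the path to pass through three designated lattice points at three designated time steps, and $\aFr(k, \ell)$ is obtained by summing over all triples with the prescribed differences.

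I would first treat the ``same-chain'' subcase where $x, y, z$ all lie in one chain, say chain $A$ (by the $A \leftrightarrow B$ symmetry), with chain-gaps $a := i_y - i_x$ and $b := i_z - i_y$. Decomposing $\aFr(k, \ell)$ by the ``base height'' parameter $h$ that pins down $L(x)$, each linear extension splits into four lattice-path segments. The two middle segments between consecutive waypoints are confined to small rectangular strips disjoint from the forbidden region, with exactly $k - 1$ and $\ell - 1$ steps respectively, and so contribute middle factors $\binom{k - 1}{a - 1}$ and $\binom{\ell - 1}{b - 1}$ that depend only on $k, \ell$ and not on $h$. The two outer segments depend on $h$ and on $(k, \ell)$ only through the combined shift $s := k + \ell$ along the anti-diagonal. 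The middle binomials appear symmetrically and cancel in the CPC ratio
\[
\frac{\aFr(k, \ell)\.\aFr(k + 1, \ell + 1)}{\aFr(k, \ell + 1)\.\aFr(k + 1, \ell)},
\]
reducing CPC to the one-variable log-concavity $G(s)\.G(s + 2) \le G(s + 1)^2$, where $G(s)$ is the convolution of the two outer path-counts shifted relative to one another by $s$.

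Log-concavity of $G$ would then be proved in two steps. In the degenerate case of no forbidden region, a Chu--Vandermonde-type summation collapses $G(s)$ into the single binomial $\binom{p + q - s}{p - (i_z - i_x)}$, whose log-concavity in $s$ is classical. In general, I would apply the Lindstr\"om--Gessel--Viennot lemma to express each outer path-count as a $2 \times 2$ determinant of binomial counts (interpreted as pairs of non-intersecting lattice paths navigating around the staircase), and verify log-concavity of their convolution $G$ either algebraically from a matrix-inequality argument or combinatorially by a ``swap at the first crossing'' injection on pairs of path pairs.

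For the remaining subcases in which $x, y, z$ are split between the two chains, the middle factors are no longer independent of the summation variable $h$, and the clean cancellation above breaks. Here the cleanest tool is a combinatorial injection applied directly to pairs of paths: given a pair of lattice paths with waypoint-gaps $(k, \ell)$ and $(k + 1, \ell + 1)$ contributing to the LHS of CPC, one performs a local surgery on the two paths between their $y$- and $z$-waypoints to produce a pair with waypoint-gaps $(k, \ell + 1)$ and $(k + 1, \ell)$. The main obstacle is to define this surgery explicitly and verify that it is injective and preserves forbidden-region avoidance; the key observation is that the staircase region is a union of upward-closed and downward-closed quadrants, so local swaps at path crossings preserve membership in each such quadrant, ensuring the validity of the injection.
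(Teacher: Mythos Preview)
Your proposal has a genuine gap in the ``same-chain'' case, and it stems from the decomposition you chose.

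The claim that the two middle segments are ``confined to small rectangular strips disjoint from the forbidden region'' and therefore contribute pure binomial factors $\binom{k-1}{a-1}$ and $\binom{\ell-1}{b-1}$ is false for a general width-two poset. The staircase region carved out by the cross-relations $a_i\prec b_j$ and $b_j\prec a_i$ can and typically does intrude into the rectangle traversed between the $x$- and $y$-waypoints (and between the $y$- and $z$-waypoints). Worse, as your base-height parameter $h$ varies, that rectangle slides vertically and the staircase cuts it differently, so the middle path-count depends on $h$ as well as on $k$. The binomials do not factor out, the cancellation you describe does not happen, and there is no reduction to a one-variable log-concavity $G(s)G(s+2)\le G(s+1)^2$. (Your argument is essentially correct only for the disjoint union of two chains.) The LGV step is also off: the forbidden region is a general two-sided staircase, not a single corner, so a $2\times 2$ determinant does not encode the outer path-count.

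The paper's combinatorial proof fixes this by decomposing on $L(y)$, the \emph{middle} element, rather than $L(x)$. Writing $\aFr(k,\ell)=\sum_u \aG(k,Y^{\langle u\rangle})\,\aH(\ell,Y^{\langle u\rangle})$, where $\aG$ counts the path from $\zero$ to the $y$-waypoint (through the $x$-waypoint) and $\aH$ counts the rest, cleanly separates the $k$-dependence from the $\ell$-dependence. The CPC difference then telescopes to
\[
\sum_{u<w}\GCP(k,Y^{\langle u\rangle},Y^{\langle w\rangle})\cdot\HCP(\ell,Y^{\langle u\rangle},Y^{\langle w\rangle}),
\]
and the whole problem reduces to showing each $\GCP\ge 0$ and each $\HCP\le 0$ separately. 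Those sign conditions are exactly the ``swap at first crossing'' injections you allude to in your split-chain case --- but applied to pairs of half-paths with a common start (or end), not to full paths, and with a vertical translation before the swap so that the paths are forced to cross (Lemmas~8.2 and~8.4 in the paper). That translation-then-swap, together with the check that the resulting paths stay inside the staircase region, is the actual technical content; your last paragraph gestures at it but does not supply it, and your same-chain treatment bypasses it with a factorization that does not hold.
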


\medskip

We present two different proofs for this theorem, which both have their
own unique advantages.  The first proof use the technique of \emph{characteristic
matrices} which arise in the forthcoming paper~\cite{CP} by the first two
authors.  Roughly speaking, this approach translates the \emph{dynamic programming}
approach to computing \ts $e(P)$ \ts into the language of matrix multiplication.
This approach is versatile enough to allow extensive computations for all
width two posets.

The CPC-type inequalities translate into nonpositivity of all \ts $2\times 2$ \ts minors
of the matrix \ts $\bF_P = \bigl(\aF(k,\ell)\bigr)$, cf.~$\S$\ref{ss:finrem-total}.
We note that this property is preserved under matrix multiplication
(see $\S$\ref{ss:back-CB}); this observation
turned out to be the key to the otherwise very technical proof.  After a rather
extensive setup, we prove that  matrix \ts $\bF_P$ \. is a product of certain elementary
matrices, which implies Theorem~\ref{t:main}.  Our approach also proves the following
extension of the theorem, and suggests the following conjecture:

\medskip

\begin{conj}[{\rm \defn{generalized cross--product conjecture}}]\label{conj:CP-gen}
Let \ts $x,y,z\in X$ \ts be distinct elements of a finite poset \ts $P=(X,\prec)$.
Denote by \ts \ts $\aFr(k,\ell)$ \ts the number of linear extensions $L\in \Ec(P)$,
such that \ts $L(y)-L(x)=k$ \ts and \ts $L(z)-L(y)=\ell$.
Then:
\begin{equation}\label{eq:CP-ineq-gen}
\aFr(k,\ell) \ \aFr(k+i,\ell+j) \ \le \ \aFr(k,\ell+j) \ \aFr(k+i,\ell)
 \quad \text{for all} \quad i, \ts j, \ts k, \ts \ell \. \ge \. 1\ts.
\end{equation}
\end{conj}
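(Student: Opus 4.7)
The plan is to split the problem into two stages: (i) a reduction of the general $i, j \ge 1$ case to the adjacent case $i = j = 1$ (\Cref{conj:CP}) via the classical monotonicity of $2 \times 2$ minors, and (ii) an attack on \Cref{conj:CP} itself in full generality, which is the principal obstacle.

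For stage (i), assume \Cref{conj:CP} and propagate to arbitrary $i, j \ge 1$ in two rounds of induction. First, for fixed $i = 1$, induct on $j$ to establish
\[
F(k, \ell)\ts F(k{+}1, \ell{+}j) \,\le\, F(k, \ell{+}j)\ts F(k{+}1, \ell) \qquad \text{for all } k, \ell \ge 1,
\]
with the step $j \to j{+}1$ obtained by multiplying the hypothesis with \Cref{conj:CP} applied at $(k, \ell{+}j)$ and cancelling the common factor $F(k, \ell{+}j)\ts F(k{+}1, \ell{+}j)$. Second, for each $j$, induct on $i$: the step $i \to i{+}1$ multiplies the hypothesis at $(k{+}1, \ell)$ (with shift $(i, j)$) with the $(1, j)$--case applied at $(k, \ell)$, and cancels the factor $F(k{+}1, \ell)\ts F(k{+}1, \ell{+}j)$. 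Both cancellations require the middle values to be positive, which motivates an independent lemma asserting that the support of $F$ is \emph{grid--convex}: whenever $F(k_0, \ell_0), F(k_1, \ell_1) > 0$ with $(k_0, \ell_0) \le (k_1, \ell_1)$ componentwise, $F$ is positive throughout the enclosed sub--rectangle. I expect this via the adjacent--transposition graph on $\Ec(P)$: an elementary swap changes $\bigl(L(y) - L(x),\. L(z) - L(y)\bigr)$ by a vector in $\{(0, 0),\ts (\pm 1, 0),\ts (0, \pm 1),\ts (\pm 1, \mp 1)\}$, so any path between linear extensions realizing the two endpoints forces the pair to visit every intermediate lattice point of the rectangle by a planar continuity argument.

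The main obstacle is stage (ii): \Cref{conj:CP} is open for posets of width $\ge 3$, and the characteristic--matrix machinery of this paper appears intrinsically tied to width two --- its efficacy relies on encoding the poset as two interleaved chains and reducing the count to a product of small matrices, a structure unavailable in wider posets. A genuinely new idea is needed. Two avenues I would explore are: (a) expressing $F(k, \ell)$ as a coefficient of a bivariate Lorentzian polynomial in the sense of Br\"and\'en--Huh, so that \Cref{conj:CP} becomes a log--submodularity statement for such coefficients --- a bivariate strengthening of the Alexandrov--Fenchel inequality that does not follow from its standard one--parameter form used for the Stanley and Kahn--Saks inequalities; or (b) building a combinatorial injection from pairs of linear extensions realizing $(k, \ell)$ and $(k{+}1, \ell{+}1)$ to pairs realizing $(k, \ell{+}1)$ and $(k{+}1, \ell)$, in the spirit of the $q$--analogue and lattice--path bijection this paper develops for width two. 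The challenge in (b) is that any such injection must fully exploit the full poset structure rather than a mere chain decomposition, and its existence in general remains uncertain.
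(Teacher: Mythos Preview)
Note first that the statement under discussion is a \emph{conjecture}: the paper does not prove it for general posets, only for posets of width two (Theorem~\ref{t:main-gen}). You correctly identify stage~(ii) --- proving Conjecture~\ref{conj:CP} in full generality --- as the essential open problem, and you are candid that no proof is offered there.

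Your stage~(i), however, has a genuine gap, and the paper flags exactly this issue in~$\S$\ref{ss:finrem-GenCPC}. The telescoping reduction to the adjacent case is precisely the ``casual reader'' argument the authors warn against: it requires cancelling intermediate factors $\aFr(k',\ell')$, which is legitimate only when they are nonzero, and the authors note that ``determining when $\aFr(i,j)=0$ is rather difficult.'' Your proposed remedy --- a grid-convexity lemma via adjacent transpositions --- does not work as stated. You correctly observe that a single transposition changes $\bigl(L(y)-L(x),\,L(z)-L(y)\bigr)$ by a vector in $\{(0,0),(\pm1,0),(0,\pm1),(\pm1,\mp1)\}$, but the conclusion that any transposition path must visit every lattice point of the enclosed rectangle is false: with those step vectors one can walk from $(1,1)$ to $(3,3)$ via $(2,1)\to(3,1)\to(3,2)$ and never touch $(2,2)$ or $(1,3)$. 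Connectedness of the transposition graph on $\Ec(P)$ gives only connectedness of the support of~$\aFr$, not the rectangular convexity your cancellations need. Whether the support is in fact grid-convex --- or even satisfies the weaker corner property that $\aFr(k_0,\ell_0),\aFr(k_1,\ell_1)>0$ forces $\aFr(k_0,\ell_1),\aFr(k_1,\ell_0)>0$ --- is a real question, not a lemma with an evident proof.

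It is worth contrasting this with the paper's proof of the width-two case, which does \emph{not} reduce to $i=j=1$. Instead it factors the correlation matrix as $\bF_P = \bG_Q\,S^c\,(\bH_R)^\top$ and uses the Cauchy--Binet formula to deduce nonpositivity of \emph{all} $2\times 2$ minors of $\bF_P$ at once from the sign pattern of the factors (Lemmas~\ref{l:minor G} and~\ref{l:minor H}). This yields the generalized inequality directly and sidesteps the vanishing problem entirely --- a structural advantage of the characteristic-matrix approach over telescoping.
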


%\com{SH}{Should we explain why generalized CPC is a nontrivial extension of CPC? In a way it is trivial, if combined with the fact that we know $\aF(k,\ell)$ is unimodal.
%Since Igor already explains this to the editor before, I leave this for  Igor to decide. }

\medskip

\begin{thm}\label{t:main-gen}
The Generalized Cross--Product Conjecture~\ref{conj:CP-gen} holds for all
posets of width two.
\end{thm}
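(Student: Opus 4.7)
The plan is to reformulate \eqref{eq:CP-ineq-gen} as a statement about $2\times 2$ minors of a matrix. Setting $\bF_P := \bigl(\aFr(k,\ell)\bigr)_{k,\ell \ge 1}$, with rows indexed by $k$ and columns by $\ell$, the quantity
\[
\aFr(k,\ell+j)\,\aFr(k+i,\ell) \,-\, \aFr(k,\ell)\,\aFr(k+i,\ell+j)
\]
is precisely (the negative of) the $2\times 2$ minor of $\bF_P$ at rows $\{k, k+i\}$ and columns $\{\ell, \ell+j\}$. So Theorem~\ref{t:main-gen} is equivalent to the assertion that \emph{every} $2\times 2$ minor of $\bF_P$ is non-positive --- a global property of $\bF_P$ that no longer distinguishes adjacent from non-adjacent indices, and which is evidently stronger than Theorem~\ref{t:main}. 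This reformulation already shows that the generalization from \eqref{eq:CP-ineq} to \eqref{eq:CP-ineq-gen} is the "right" one to attack, since the global minor property is stable under the algebraic operations used in the proof.

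Next, I would set up the characteristic-matrix machinery alluded to in the introduction. For a width-two poset $P = C_1 \sqcup C_2$, the linear extensions can be enumerated by a dynamic program that scans positions $1, 2, \ldots, n = |X|$ and records at each step the pair $(a,b)$ of prefix lengths consumed from $C_1$ and $C_2$. This DP translates into a factorization
\[
\bF_P \,=\, \bv^{T}\,\bM_1\,\bM_2 \cdots \bM_n\,\bw,
\]
where each $\bM_i$ is a sparse transition matrix encoding which element may be inserted at step~$i$, and the boundary vectors $\bv, \bw$ enforce the constraints $L(y)-L(x)=k$, $L(z)-L(y)=\ell$ via a suitable indexing of rows and columns by $k$ and $\ell$. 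Concretely, I would split the scan into three phases separated by the insertions of $x$ and $y$ (and terminated by that of $z$); the row parameter $k$ advances during the middle phase and the column parameter $\ell$ during the last, which is what couples the indexing of $\bF_P$ to the DP.

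The crux is then an algebraic closure statement: the class of matrices all of whose $2\times 2$ minors are non-positive is closed under the specific type of product appearing in the factorization, and each individual $\bM_i$ belongs to this class. The second assertion is a short case analysis driven by the relative position of the insertion step~$i$ with respect to $x, y, z$ along the two chains --- in each case $\bM_i$ is either an identity, a coordinate shift, or a very sparse combination of the two, for which the relevant minor sign is immediate. The first assertion is the heart of the matter: a raw Cauchy--Binet expansion of a $2\times 2$ minor of a product expresses it as a sum of products of two non-positive quantities, a priori of indeterminate sign. I would exploit the sparsity of the $\bM_i$ so that in each such expansion only one term survives, after which the signs combine correctly, and then conclude by induction on the length of the factorization.

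The principal obstacle I anticipate is organizing the factorization so that both $y$ and $z$ "trigger" the appropriate shift in indexing (row index $k$ after the $x \to y$ interval, column index $\ell$ after $y \to z$) while keeping each $\bM_i$ sparse enough for the closure step to go through with a single surviving Cauchy--Binet term. Managing the two distinct markers $y$ and $z$ simultaneously --- and hence the two parameters $k, \ell$ simultaneously --- is where Theorem~\ref{t:main-gen} is genuinely harder than the Kahn--Saks-type inequality of Theorem~\ref{t:KS}, and where the full four-parameter form $(i,j,k,\ell)$ becomes both the target and the object preserved by the induction on the length of the product.
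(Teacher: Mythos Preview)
Your reformulation as a statement about all $2\times 2$ minors of $\bF_P$ being nonpositive is exactly right, and the instinct to use a matrix factorization plus Cauchy--Binet is also the paper's route. But the plan as written has a genuine gap at the closure step, and the fix you propose is not the one that works.

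First, the expression $\bF_P = \bv^{T}\bM_1\cdots \bM_n\bw$ is a scalar, not a matrix in $(k,\ell)$; you gesture at ``phases'' that advance $k$ and $\ell$ separately, but this means the factorization is not a single chain of matrix products --- it is a product of the form $\bF_P = \bG\cdot \bH^{\top}$ where $\bG$ records the portion of the linear extension up to $y$ (indexed by $k$ and the position of $y$) and $\bH$ the portion after $y$ (indexed by $\ell$ and the position of $y$). This split at the middle element $y=z_2$ is the organizing move in the paper, and it is not optional: without it you do not get a matrix in two independent indices.

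Second, and more seriously: your closure statement is false, and the sparsity patch does not rescue it. Matrices with all $2\times 2$ minors \emph{nonnegative} are closed under products (Cauchy--Binet gives a sum of nonnegative terms), but matrices with all minors \emph{nonpositive} are not, since a product of two nonpositive quantities is nonnegative. You recognize this, but the ``only one term survives'' idea fails: among the elementary DP matrices is the upper-triangular all-ones matrix $T$ (arising whenever the element inserted is from the second chain), which is not sparse, and products involving $T$ generically have many surviving Cauchy--Binet terms. The paper's resolution is different and structural: after the split $\bF_P = \bG_{Q}\,S^{c}\,\bH_{R}^{\top}$, one shows that $\bG_Q$ has all $2\times 2$ minors \emph{nonnegative} and $\bH_R$ has all $2\times 2$ minors \emph{nonpositive} (the latter by order-duality from the former). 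Then Cauchy--Binet gives each term as (nonnegative)$\times$(nonpositive), so the sum is nonpositive. The sign asymmetry between the two factors is the point.

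Finally, proving that $\bG_Q$ has nonnegative $2\times 2$ minors is itself the bulk of the work and is not a short case check on elementary matrices. The paper introduces a ``cross--product relation'' $\precCP$ on vectors, shows it is preserved by left multiplication by each elementary matrix, and uses this to propagate through the characteristic-matrix factorization of an auxiliary matrix $\bN_P$; along the way one needs a Stanley-type log-concavity statement (Lemma~\ref{l:rvector is decreasing}) for certain slice vectors. This machinery, not a one-term Cauchy--Binet, is what carries the induction.
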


\medskip

\noindent
Note that Conjecture~\ref{conj:CP-gen} \ts contains Conjecture~\ref{conj:CP} \ts
when \ts $i=j=1$ (see also~$\S$\ref{ss:finrem-GenCPC} for more on the relation).
Thus, Theorem~\ref{t:main-gen} \ts contains Theorem~\ref{t:main} \ts in that case.

\medskip

Our second proof is entirely combinatorial and gives a surprising
$q$-analogue of Theorem~\ref{t:main}.  In the notation of the theorem,
fix a partition $P$ into two chains \ts $\Cr_1,\Cr_2 \ssu X$,
where \ts $\Cr_1 \cap \Cr_2=\emp$.
% Note that such partitions exists by Dilworth's theorem.
The \defn{weight} of a linear extension $L\in \Ec(P)$ is defined as
\begin{equation}\label{eq:def-weight}
 \wgt(L) \ := \ \sum_{x \in \Cr_1} \.  L(x)\..
\end{equation}
The \ts \defn{$q$-analogue} \ts of \ts $\aF(k,\ell)$ \ts is now defined as:
\begin{equation}\label{eq:def-weight-q-analogue}
\aF_q(k,\ell) \ := \  \sum_{L}  \, q^{\wgt(L)}\.,
\end{equation}
where the summation is over all linear extensions \ts $L\in \Ec(P)$,
such that \ts $L(y)-L(x)=k$ \ts and \ts $L(z)-L(y)=\ell$.  We think of
\ts $\aF_q(k,\ell) \in \nn[\ts q]$ \ts as a polynomial with integer coefficients.
Note that the definitions of
both \ts $\wgt(L)$ \ts and \ts $\aF_q(k,\ell)$ \ts depend on the chain
partition (cf.~$\S$\ref{ss:finrem-q}).

\medskip

\begin{thm}[{\rm\defn{$q$-cross--product inequality}}]
\label{t:main-big-q}
Let \. $P=(X,\prec)$ \. be a finite poset of width two, let $(\Cr_1,\Cr_2)$
be a partition of~$P$ into two chains.  For all distinct elements \ts $x,y,z\in X$,
we have:
\begin{equation}\label{eq:CP-ineq-q}
\aFr_q(k,\ell) \ \aFr_q(k+1,\ell+1) \ \leqslant \ \aFr_q(k,\ell+1) \ \aFr_q(k+1,\ell)
\quad \text{for all} \quad  k, \ts \ell \. \ge \. 1\ts,
\end{equation}
where \ts $\aFr_q(k,\ell)$ \ts is defined in~\eqref{eq:def-weight-q-analogue}, and
the inequality between polynomials is coefficient-wise.
\end{thm}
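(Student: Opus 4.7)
The plan is to prove Theorem~\ref{t:main-big-q} via a lattice-path encoding of linear extensions of $P$ and an application of the Lindstr\"om--Gessel--Viennot (LGV) lemma. Writing $\Cr_1=\{a_1\prec\cdots\prec a_p\}$ and $\Cr_2=\{b_1\prec\cdots\prec b_q\}$, I encode each $L\in\Ec(P)$ by the lattice path $\pi_L$ from $(0,0)$ to $(p,q)$ whose $t$-th step is east when $L^{-1}(t)\in\Cr_1$ and north otherwise; cross-chain comparabilities of $P$ confine $\pi_L$ to a staircase region $R$. A direct count gives
$$
\wgt(L) \ = \ \binom{p+1}{2} \, + \, \sum_{s=1}^{p}\bigl(\text{number of north steps of $\pi_L$ preceding its $s$-th east step}\bigr),
$$
so $q^{\wgt(L)}$ is $q^{c}$ times a $q$-area statistic that is additive under concatenation of sub-paths.

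\smallskip

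Next, I would translate the marker constraints into factored path counts. Fixing $L(u)=m$ for $u\in X$ forces $\pi_L$ to pass through a specific lattice point $A_u(m)\in R$, namely the endpoint of the step placing $u$ (determined by $u$'s chain and rank together with $m$). Hence each $L$ contributing to $\aFr_q(k,\ell)$ factors uniquely as a concatenation of four sub-paths in $R$,
$$
(0,0) \. \to \. A_x(m) \. \to \. A_y(m{+}k) \. \to \. A_z(m{+}k{+}\ell) \. \to \. (p,q),
$$
whose $q$-area weights multiply. Summing over $m$ writes $\aFr_q(k,\ell)$ as an $m$-sum of four-fold products of path-generating functions over the corresponding sub-regions of $R$.

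\smallskip

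To prove
$$
\aFr_q(k,\ell{+}1) \, \aFr_q(k{+}1,\ell) \,-\, \aFr_q(k,\ell) \, \aFr_q(k{+}1,\ell{+}1) \ \in \ \nn[q],
$$
I would interpret each product as a $q$-weighted count of ordered pairs of lattice paths subject to the respective marker constraints, and apply a small coordinate shift to one of the paths so that the pair sits in an LGV-admissible configuration. In this configuration, the straight pairing counts pairs of type $\bigl((k,\ell{+}1),(k{+}1,\ell)\bigr)$ and the crossed pairing counts pairs of type $\bigl((k,\ell),(k{+}1,\ell{+}1)\bigr)$. The standard path-swap involution at the first intersection cancels intersecting pairs between the two pairings, and the surviving contribution is the $q$-generating function of non-intersecting pairs---a polynomial with non-negative integer coefficients---which gives the desired coefficient-wise inequality.

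\smallskip

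The main obstacle will be arranging the shift and the shifted marker points so that (a) the path-swap involution is weight-preserving, with additivity of the $q$-area statistic playing a key role, (b) both sub-paths remain in (shifted copies of) $R$ for every distribution of $x,y,z$ across $\Cr_1$ and $\Cr_2$, and (c) the swap exactly converts crossed pairs of type $\bigl((k,\ell),(k{+}1,\ell{+}1)\bigr)$ into intersecting straight pairs of type $\bigl((k,\ell{+}1),(k{+}1,\ell)\bigr)$. Since the local geometry of $R$ near each marker changes with the case, checking the swap in each of the several cases---depending on which chain contains each of $x,y,z$---is where the combinatorial work of the proof concentrates.
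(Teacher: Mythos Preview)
Your proposal has the right ingredients---the lattice-path encoding, the area interpretation of the $q$-weight, and the LGV-style tail-swap at a first intersection---and these are exactly the tools the paper uses.  But the plan of a \emph{single} shift-and-swap on pairs of full paths from $(0,0)$ to $(p,q)$ is too optimistic, and this is where the proposal has a real gap.

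The difficulty is that the two parameters $k$ and $\ell$ are governed by markers on opposite sides of the $y$-step: increasing $k$ moves the $x$-marker, while increasing $\ell$ moves the $z$-marker.  A single tail-swap at one intersection point exchanges the suffixes of the two paths; it cannot simultaneously trade the $x$-marker data (which lives before the intersection) and the $z$-marker data (which lives after).  Concretely, there is no single shift that places a pair of type $\bigl((k,\ell),(k+1,\ell+1)\bigr)$ into an LGV configuration whose crossed pairing is $\bigl((k,\ell+1),(k+1,\ell)\bigr)$, because both coordinates have to move and they move at different points along the path.  The paper avoids this by first performing a bilinear decomposition at the $y$-step: writing $\aF_q(i,j)=\sum_u q^{\ast}\,\aG_q(i,Y^{\langle u\rangle})\,\aH_q(j,Y^{\langle u\rangle})$, expanding the difference as
\[
\sum_{u<w}\ \GCP_q(i,Y^{\langle u\rangle},Y^{\langle w\rangle})\cdot \HCP_q(j,Y^{\langle u\rangle},Y^{\langle w\rangle}),
\]
and then proving separately that $\GCP_q\geqslant 0$ and $\HCP_q\leqslant 0$.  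Each of these sign claims in turn requires \emph{two} consecutive injections (one near the $x$-- or $z$--marker, one near the $y$--split), not one.  This two-stage structure is the missing idea in your outline; once you insert it, the case analysis you anticipate (according to which chains contain $x,y,z$) is exactly what remains.
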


\medskip

Clearly, by setting \ts $q=1$ \ts we recover Theorem~\ref{t:main}.
Our final application of the lattice path approach is the following
necessary and sufficient condition for equality in~\eqref{eq:CP-ineq} and~\eqref{eq:CP-ineq-q}.

\medskip

\begin{thm}[{\rm \defn{cross--product equality}}]
\label{t:cpc_equality}
Let \. $P=(X,\prec)$ \. be a finite poset of width two, $|X|=n$, and let $(\Cr_1,\Cr_2)$
be a partition of~$P$ into two chains.  Fix distinct elements \ts $x,y,z\in X$, and integers
\ts $k,\ell$, s.t.\ \ts $1\le k, \ell \le n-1$.  Denote by \ts \ts $\aFr(k,\ell)$ \ts
the number of linear extensions $L\in \Ec(P)$,
such that \ts $L(y)-L(x)=k$ \ts and \ts $L(z)-L(y)=\ell$. Then the equality
\begin{equation}\label{eq:cpc_equality}
\aFr(k,\ell) \ \aFr(k+1,\ell+1) \ = \ \aFr(k,\ell+1) \ \aFr(k+1,\ell)
\end{equation}

\nin
holds \ts \emph{\underline{if and only if}} \ts one of the following holds:

\smallskip

{\small \bf (a)} \  $\aFr(k,\ell) \ts = \ts \aFr(k+1,\ell)$ \ and \ $\aFr(k,\ell+1) \ts = \ts \aFr(k+1,\ell+1)$,

\smallskip

{\small \bf (b)} \   $\aFr(k,\ell) \ts = \ts \aFr(k,\ell+1)$ \ and \ $\aFr(k+1,\ell) \ts = \ts \aFr(k+1,\ell+1)$,

\smallskip

{\small \bf (c)} \  $\aFr(k+1,\ell) \ \aFr (k,\ell+1)\. = \ts 0$,

\smallskip

{\small \bf (d)} \ There exists an integer~$\ts m$, s.t.\ \. $L(y) = m$ \. for every \. $L\in \Ec(P)$.
% \ts $\Fc(i,j) \ts \cup \ts \Fc(i+1,j) \ts \cup \ts \Fc(i,j+1) \ts \cup \ts \Fc(i+1,j+1)$\ts.

\smallskip

\nin
Moreover, the equality~\eqref{eq:cpc_equality} holds \ts \emph{\underline{if and only if}} \ts
\begin{equation}\label{eq:cpc_equality-q}
\aFr_q(k,\ell) \ \aFr_q(k+1,\ell+1) \ = \ \aFr_q(k,\ell+1) \ \aFr_q(k+1,\ell+1)\ts.
\end{equation}
\end{thm}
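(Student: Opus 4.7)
The plan is to prove sufficiency and necessity separately, and then deduce the $q$-equivalence as a formal consequence of the coefficient-wise nature of~\eqref{eq:CP-ineq-q}.

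\emph{Sufficiency.} Each of (a)--(d) implies~\eqref{eq:cpc_equality} by a short computation. Cases (a) and (b) are immediate: if two columns (resp.\ two rows) of the $2\times 2$ array $\bigl(\aFr(k+i,\ell+j)\bigr)_{i,j\in\{0,1\}}$ agree, the two cross-products coincide. For (c), Theorem~\ref{t:main} gives $\aFr(k,\ell)\,\aFr(k+1,\ell+1) \le 0$, and non-negativity of the factors forces the LHS of~\eqref{eq:cpc_equality} to vanish as well. For (d), the hypothesis $L(y)=m$ for every $L$ is equivalent to $y$ being comparable to every other element of $X$: any element $u$ incomparable to $y$ admits an extension where $u$ sits adjacent to $y$, and swapping them shifts $L(y)$. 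Thus $P$ splits at $y$ into sub-posets $P_{<y}$ and $P_{>y}$, giving a factorization $\aFr(k,\ell)=\vvt_1(m-k)\,\vvt_2(m+\ell)$, where $\vvt_1,\vvt_2$ count linear extensions of the two sub-posets with the prescribed value of $L(x)$, resp.\ $L(z)$. Identity~\eqref{eq:cpc_equality} then holds termwise.

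\emph{Necessity.} I would work inside the lattice-path encoding of width-two posets underlying the proof of Theorem~\ref{t:main-big-q}. Pairs of linear extensions $(L,L')$ contributing to $\aFr(k,\ell)\,\aFr(k+1,\ell+1)$ correspond to ordered pairs of lattice paths with prescribed passage windows dictated by the positions of $x,y,z$, and the LGV-style uncrossing swap gives an injection into the pairs counted by $\aFr(k,\ell+1)\,\aFr(k+1,\ell)$. Equality~\eqref{eq:cpc_equality} holds iff this injection is a bijection, so the task becomes classifying the pairs in the target that admit no swap-preimage. A geometric case analysis of where a non-swappable configuration can lie shows that the obstruction set is nonempty only in four rigid regimes, corresponding one-for-one to conditions~(a)--(d); in particular, the collapse of the $y$-coordinate in the lattice model is translated via the comparability lemma above into the statement that $L(y)$ is constant, giving~(d).

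\emph{$q$-equivalence.} Since~\eqref{eq:CP-ineq-q} holds coefficient-wise, equality of the $q=1$ specializations in~\eqref{eq:cpc_equality} combined with coefficient-wise $\le$ forces every coefficient of the polynomial products to agree, hence~\eqref{eq:cpc_equality-q}; the reverse direction is immediate on setting $q=1$.

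The main obstacle will be the rigidity classification in the necessity step. Ensuring that there are no exotic failure modes beyond (a)--(d) requires carefully identifying each geometric obstruction in the lattice-path model with its poset-theoretic meaning. The most delicate identification is (d), where an apparent partial degeneration of the path configuration must actually force $y$ to have a unique admissible rank --- a structural statement about width-two posets rather than a purely combinatorial statement about paths.
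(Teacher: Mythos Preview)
Your sufficiency and necessity arguments follow essentially the same route as the paper: the ``if'' direction is a direct check in each case (with (d) handled via the factorization at~$y$), and the ``only if'' direction is the analysis of when the lattice-path injections underlying Theorem~\ref{t:main-big-q} are bijections, splitting according to whether the range of admissible values of $L(y)$ collapses to a point (yielding~(d)) or not (yielding (a)~or~(b) via the equality case of the path lemmas). Your sketch is light on exactly the step the paper works hardest at --- namely, the decomposition of the difference $\aFr(k,\ell)\aFr(k+1,\ell+1)-\aFr(k,\ell+1)\aFr(k+1,\ell)$ as a sum over pairs $(u,w)$ of products $\GCP\cdot\HCP$ with fixed signs, and then pinning down what $\GCP=0$ or $\HCP=0$ forces about the region $\Reg(P)$ --- but the strategy is the same.

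Your $q$-equivalence argument, however, is genuinely different and cleaner than the paper's. The paper re-enters the case split (a)--(d) and verifies the polynomial identity~\eqref{eq:cpc_equality-q} separately in each case using the explicit relations $\aG_q(i,\Yt)=q\,\aG_q(i+1,\Yt)$ etc. You instead observe that Theorem~\ref{t:main-big-q} gives a coefficient-wise inequality of polynomials in~$\nn[q]$, so equality at $q=1$ forces equality of every coefficient and hence of the polynomials. This is a one-line deduction that bypasses the casework entirely; it is worth keeping.
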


\medskip

In other words, the theorem says that the cross--product equality~\eqref{eq:cpc_equality}
can occur only in some degenerate cases when the equality is straightforward.
For example,  item~{\small \bf (c)} \ts says that there are zero terms on both sides
of the equality.  Similarly, item~{\small \bf (d)} \ts says that poset $P$ can be
written as a series composition \. $P' \ast y \ast P''$, where $P'$ is an induced order on
$(m-1)$ elements smaller than~$y$, and $P'$ is an induced order on $(n-m)$ elements
greater than~$y$.  In that case both the LHS and the RHS of~\eqref{eq:cpc_equality}
split into products of four identical terms.

We should mention that Theorem~\ref{t:cpc_equality} is modeled after a
remarkable recent result by
Shenfeld and van Handel \cite[Thm~15.3]{SvH}, which gave an equality
criterion for Stanley's inequality~\eqref{eq:stanley} in the generality
of all finite posets.  We postpone until $\S$\ref{ss:finrem-equality}
further discussion of poset equalities.

\smallskip

Our proof of Theorem~\ref{t:main-big-q} is based on interpreting linear extensions
of width two posets as \emph{lattice paths}, a classical approach recently
employed by the authors in~\cite{CPP1}.  To prove inequalities, we construct
explicit injections in the style of the \emph{Lindstr\"om--Gessel--Viennot}
(LGV) \emph{lemma}, by looking at \emph{first intersections} of certain
lattice paths~\cite{GV}.  Theorem~\ref{t:cpc_equality} is then derived by
careful analysis of these injections.

Now, to prove ``$q$-inequalities'', we observe that the \emph{$q$-statistic}
given by the weight in~\eqref{eq:def-weight}, counts the area below the
corresponding paths, and are preserved under our injections.
We refer to~\cite[Ch.~5]{GJ} for both background on lattice paths,
the LGV lemma, and the $q$-statistics by the area.

\medskip

\subsection{The importance of CPC}\label{ss:intro-importance}
We believe that our Generalized Cross--Product Conjecture~\ref{conj:CP-gen}
should be viewed as a central problem in the area.  Our Theorem~\ref{t:main-gen}
is one justification, but we have other reasons to support this claim.  We show
that Conjecture~\ref{conj:CP-gen-even-more}, which is a minor extension of
Conjecture~\ref{conj:CP-gen},  implies the following classical inequalities
in the area:

\smallskip

$\bu$ \ the Kahn--Saks inequality~\eqref{eq:KS-ineq}, see $\S$\ref{ss:power-KS},

\smallskip

$\bu$ \ the Graham--Yao--Yao inequality~\eqref{eq:GYY}, see $\S$\ref{ss:power-GYY}
(see also~$\S$\ref{ss:finrem-GYY}),

\smallskip

$\bu$ \ the XYZ inequality~\eqref{eq:XYZ} by Shepp, see $\S$\ref{ss:power-XYZ}
(see also~$\S$\ref{ss:finrem-XYZ}).

\smallskip

\nin
Each of these implications is a relatively short probabilistic argument largely
independent of the rest of the paper.  We collect them in Section~\ref{s:power}.

\bigskip

\subsection{Structure of the paper}\label{ss:intro-structure}
We begin with a short Section~\ref{s:background} which covers notation and some
background definitions which we use throughout the paper.  In a warmup
Section~\ref{s:power}, we expound on the importance of the cross--product
conjectures by showing that it implies several known inequalities, see above.

The remaining sections are split into two parts giving the algebraic
proof of Theorem~\ref{t:main-gen} and combinatorial proof of
Theorem~\ref{t:main-big-q}.  Both parts are rather technical and lengthy;
the algebraic part is presented in
Sections~\ref{s:characteristic matrix}--\ref{s:proof of CPC}, while the
combinatorial part is presented in
Sections~\ref{s:lattice}--\ref{s:proof of theorem qCP}.
%Theorem~\ref{t:CPC-strict} is also proved in
%Section~\ref{s:proof of theorem qCP} as a byproduct of the proof of
%Theorem~\ref{t:main-big-q}.
In Section~\ref{s:equality}, we derive the  equality case (Theorem~\ref{t:cpc_equality}),
using our combinatorial approach.  We conclude with
final remarks and open problems in Section~\ref{s:finrem}.

%\newpage
 \bigskip

\section{Preliminaries}\label{s:background}

\subsection{Basic notation} \label{ss:back-not}
We use \ts $[n]=\{1,\ldots,n\}$, \ts $\nn = \{0,1,2,\ldots\}$, and
\ts $\pp = \{1,2,\ldots\}$. Throughout the paper we use $q$ as a variable.
For polynomials \ts $f, g\in \zz[q]$, we write \ts $f\leqslant g$ \ts if
the difference \ts $(g-f) \in \nn[q]$, i.e.\ if \ts $(g-f)$ \ts
is a polynomial with nonnegative coefficients.  Finally, we use
relation \. ``$\precCP$'' \. for vectors, to indicate a property in
Definition~\ref{def:CP-relation}.  Note the difference between relations
$$
x \ts \preccurlyeq_P y\,, \quad a \ts\le \ts b\,,  \quad f \leqslant \ts g \quad  \text{and} \quad
\vb \ts \precCP \ts\wb\ts,
$$
for posets elements, integers, polynomials and vectors, respectively.

\subsection{Fonts and letters} \label{ss:back-fonts}
We adopt somewhat nonstandard notation for both vectors and matrices.
Most matrices are written in bold, with their integer entries in
Roman font with indices in parentheses.  For example, we use a
matrix \ts $\bA$ \ts with entries \ts $\bAr(i,j)$, $1\le i,j\le n$.
Same goes for vectors: we write \ts $\ab=\bigl(\abr(1),\ldots,\abr(n)\bigr)$.
There are several reasons for this, notably because a lot of action happen
to these entries, and the fact that we need space for subscripts as these
vectors are indexed by posets and their elements.

What makes it more complicated, is that we use the usual English notation
for certain especially simple matrices, such as $S, T, W$, etc., and the
fact that both our vectors and matrices are infinite dimensional.
Everything we do can actually be done in $|X|=n$ dimensions, but fixing
dimension brings a host of other technical and notational problems
that we avoid with our choices.

In the second half of the paper we use small Greek letters to denote the
lattice paths, and  capital English letters to denote
the start and end points of these paths in $\Zb^2$.  The coordinates are denoted
by the corresponding small letters.  So e.g.\ we can have a lattice
path \ts $\ga: \A\to\B$, where \ts $\A=(a_1,a_2)$ \ts and \ts $\B=(b_1,b_2)$.
We also use a nonstandard notation for polynomials, writing e.g.\
\. $\aK_q(\A,\B)$ \. for a $q$-polynomial~$\aK$ which counts certain
paths from~$\A$ to~$\B$.  Finally, we use curvy English letters to denote
sets of path, i.e.\ we would write that \ts $\aK=|\Kc|$ \ts is the number of
lattice paths in the set~$\Kc$.  Note the difference in fonts in all
these cases.

\medskip

\subsection{Posets} \label{ss:back-def}
Let \. $P=(X,\precc_P)$ \. be a finite poset with ground set $X$ of size~$n$.
We write $\prec$ in place of $\precc_P$ whenever the underlying poset is clear.
For every $x \in X$, denote
$$
\lessr_P(x) \ := \ \bigl|\{\ts y\in X~:~y\prec x\ts \}\bigr| \qquad \text{and}\qquad
\inc_P(x) \ := \ \bigl|\{\ts y\in X~:~y\ne x, \ y\nprec x \ \text{and} \ y \nsucc x\ts\}\bigr|
$$
the numbers of poset elements that are strictly smaller and that are incomparable to~$x$,
respectively.

A \defn{linear extension} of $P$ is a bijection \. $L: X \to [n]$, such that
\. $L(x) < L(y)$ \. for all \. $x \prec y$.
Denote by \ts $\Ec(P)$ \ts the set of linear extensions of $P$,
and write \. $e(P):=|\Ec(P)|$.  For a subset $Y\ssu X$ and a poset $P=(X,\prec_P)$,
define a \defn{restriction} $P'=P|_Y$ to be a poset $P':=(Y,\prec_Y)$ with the
order $\prec_Y$ induced by~$\prec_P$.  Similarly, a for a linear extension $L\in \Ec(P)$,
define a \defn{restriction} $L'=L|_Y \in \Ec(P')$, with the linear order on~$Y$ induced
by the linear order on~$L$.

\medskip

\subsection{Correlation matrix} \label{ss:back-matrix}
Fix three distinct elements $z_1,z_2,z_3$ of $X$ throughout this paper.
For every \ts $i,j \geq 1$, denote by \ts $\Fc(i,j)$ \ts the set of linear
extensions of $X$ defined as
\begin{equation}\label{eq:Fmatrix}
	\Fc(i,j) \ := \ \bigl\{\. L \in \Ec(X) \. \mid \.  L(z_2)-L(z_1) =i\., \ L(z_3)-L(z_2) = j \. \bigr\}\..
\end{equation}
Let \. $\aF(i,j) \. := \. \bigl|\Fc(i,j)\bigr|$, for all \ts $i,j \geq 1$.

Denote by \ts $\bF=\bF_P$ \ts the \ts $\pp \times \pp$ \ts matrix with integer
entries \ts $\aF(i,j)$.  We call it the \defn{correlation matrix} of poset~$P$.
While this matrix has a bounded support for all finite posets,
for technical reasons it is convenient to keep it infinite.
We do the same for the \defn{$q$-correlation matrix} \ts $\bF_q=\bF_{q,P}$ \ts with
polynomial entries $\aF_q(i,j) \in \nn[q]$ defined as in the introduction:
\[
\aF_q(i,j) \ := \  \sum_{L \in \Fc(i,j)}  \. q^{\wgt(L)} \qquad \text{for all \ $i, \ts j \. \geq \. 1$}\ts.
\]

\medskip

\subsection{Cross--product inequalities} \label{ss:back-ineq}
We can now restate the inequalities in the new notation. First,
the \defn{cross--product inequality}~\eqref{eq:CP-ineq} can be written concisely
in the matrix form:
\begin{equation}\label{eq:CPconjecture}
\det \.
\begin{bmatrix}
		\aF(i,j) & \aF(i,j+1)\\
		\aF(i+1,j) & \aF(i+1,j+1)
\end{bmatrix}
\ \leq \ 0 \qquad \text{for all \ \, $i, \, j \. \ge 1$\ts.}
\end{equation}
Similarly, the \defn{generalized cross--product inequality}~\eqref{eq:CP-ineq-gen}
can be written as:
\begin{equation}\label{eq:CPconjecture-gen}
\det \.
\begin{bmatrix}
		\aF(i,j) & \aF(i,\ell)\\
		\aF(k,j) & \aF(k,\ell)
\end{bmatrix}
\ \leq \ 0 \qquad \text{for all \ \, $1 \leq i \leq k, \ 1 \leq j \leq \ell$\ts.}
\end{equation}
This is the form in which we prove these inequalities for posets of width two.

\smallskip

Note that for the purposes of these inequalities, without loss of generality
we can always assume that elements $z_1, z_2,z_3$ satisfy
\begin{equation}\label{eq:z1z2z3}
	z_1 \ \precc_P \ z_2 \ \precc_P \ z_3\..
\end{equation}
Indeed, since \. $i,j,k,\ell \geq 1$, all the linear extensions \ts $L\in \Ec(P)$ \ts
counted by $\aF(i,j)$, $\aF(i,\ell)$, $\aF(k,j)$ and $\aF(k,\ell)$,
satisfy \. $L(z_1) < L(z_2) < L(z_3)$.  Thus the ordering in~\eqref{eq:z1z2z3}
can always be added to~$\prec_P$\ts.

\medskip

\subsection{Cauchy--Binet formula} \label{ss:back-CB}
Below we rewrite the \defn{Cauchy--Binet formula} for \ts $2\times 2$ \ts minors
in our matrix notation.
For every three \ts $n\times n$ \ts matrices \ts $\bA=\bB\ts\bC$, we have:
\begin{equation}\label{eq:CB-formula}
	   \det \begin{bmatrix}
		\aAr(i,j) & \aAr(i,\ell)\\
		\aAr(k,j) & \aAr(k,\ell)
	\end{bmatrix} \ = \  \sum_{1 \.\leq t\. \leq\. m\. \le\. n} \
\det \begin{bmatrix}
	\aBr(i,t) & \aBr(i,m)\\
	\aBr(k,t) & \aBr(k,m)
\end{bmatrix} \, \det \begin{bmatrix}
\aCr(t,j) & \aCr(t,\ell)\\
\aCr(m,j) & \aCr(m,\ell)
\end{bmatrix}\ts,
\end{equation}
for all \ts $1\le i \leq k\le n$ \ts and \ts $1\le j \leq \ell\le n$.
In particular, when both \ts $\bB$ \. and \ts $\bC$ \. have
nonnegative \ts $2\times 2$ \ts minors, the so does~$\bA$.
This simple property will be used several times in the algebraic proof.

\medskip

\subsection{Posets of width two} \label{ss:back-width-2}
\defn{Width} of a poset is the size of the \emph{maximal antichain}.
Unless stated otherwise, we assume that all posets have width two.
By the \emph{Dilworth theorem}, every poset \ts $P=(X,\prec)$ \ts of width two
can be partitioned into two chains.
From this point on, without loss of generality, we fix a partition of~$P$
into chains \. $\Cr_1,\Cr_2 \ssu X$~:
\[
\Cr_1 \ := \ \{ \. \alpha_1 \prec \ldots \prec \alpha_{\ana}\. \}\.,
\ \quad  \Cr_2 \ := \  \{ \. \beta_1 \prec \ldots \prec \beta_{\bnb} \. \}\.,
\ \quad \text{for some \ $\ana+\bnb =n$\.,}
\]
where \. $\Cr_1 \cup \Cr_2=X$ \. and \. $\Cr_1 \cap \Cr_2=\emp$.
The \defn{weight} of a linear extension \ts $L\in \Ec(P)$ \ts can then be written as:
\begin{equation}\label{eq:definition weight L}
 \wgt(L) \ = \  \sum_{x \in \Cr_1} \. L(x) \ = \ \sum_{j=1}^{\ana} \. L(\al_j)\..
\end{equation}
We will use this notation throughout the paper.

\bigskip

\section{The power of CPC}\label{s:power}

\smallskip

In this short section we show the power of the Cross--Product Conjecture by deriving
three earlier results directly from it: the Kahn--Saks inequality (Theorem~\ref{t:KS}),
the Graham--Yao--Yao inequality (Theorem~\ref{t:GYY}),
and the \ts $XYZ$ \ts inequality (Theorem~\ref{t:xyz}).

\subsection{Conjecture~\ref{conj:CP} implies Theorem~\ref{t:KS}} \label{ss:power-KS}
Let \ts $P=(X,\prec)$ \ts be a finite poset, let $|X|=n$, and let \ts $x, z \in X$.
Denote by \ts $Q=(X',\prec')$ \ts be a poset on a set \ts $X'=X+y$,
with added element \ts $y$ \ts incomparable with~$X$ in the order~$\prec'$.

We compare the Kahn--Saks inequality~\eqref{eq:KS-ineq} for the poset~$P$ with
and the cross--product inequality~\eqref{eq:CP-ineq} for the poset~$Q$.
Expounding on the notation in the introduction, denote
$$
\aligned
\aF_P(k\ts;\ts x,z) \, & := \, \bigl|\{\ts L\in \Ec(P)~:~L(z)-L(x)=k\ts\}\bigr|\ts, \\
\aF_Q(i,j\ts;\ts x,y,z) \, & := \, \bigl|\{\ts L\in \Ec(Q)~:~L(z)-L(y)=i, \. L(y)-L(x)=j\ts\}\bigr|\ts.
\endaligned
$$
Observe that in the construction above, we have:
$$
\aF_P(k+\ell-1\ts;\ts x,z) \, = \, \aF_Q(k,\ell\ts;\ts x,y,z) \quad \text{for all \ \. $k,\ell\ge 1$}\ts.
$$
Indeed, the only constraint on $L(y)$ in the RHS is the difference with $L(x)$ and $L(z)$.
Since \ts $|X'|=n+1$, the restriction of $L\in \Ec(Q)$ to $X$ give the bijection.

Now, the cross--product inequality~\eqref{eq:CP-ineq} gives:
$$
\aF_Q(k+1,\ell\ts;\ts x,y,z) \ \aF_Q(k,\ell+1\ts;\ts x,y,z) \ \ge \
\aF_Q(k,\ell\ts;\ts x,y,z) \ \aF_Q(k+1,\ell+1\ts;\ts x,y,z)\ts.
$$
This translates into
$$
\aF_P(k+\ell\ts;\ts x,z)^2 \ \ge \ \aF_P(k+\ell-1\ts;\ts x,z) \ \aF_P(k+\ell+1\ts;\ts x,z)\ts,
$$
which is the desired Kahn--Saks inequality~\eqref{eq:KS-ineq}. \qed

\smallskip

\begin{rem}{\rm
Note that this reduction increases the width of the poset.  Thus,
the cross--product inequality for posets of width two does not imply
anything about the Kahn--Saks inequality by this argument.  We do, however,
prove the $q$-Kahn--Saks inequality for posets of width two in a
followup paper, see~$\S$\ref{ss:finrem-KS}.
}\end{rem}

\medskip

\subsection{The (even more) generalized cross--product inequality} \label{ss:power-CPC-even-more}
From the point of view of this paper, it is best to state the
Generalized Cross--Product Conjecture~\ref{conj:CP-gen} in an even more
general form:

\smallskip

\begin{conj}\label{conj:CP-gen-even-more}
In conditions of Conjecture~\ref{conj:CP-gen}, we have:
\begin{equation}\label{eq:CP-ineq-gen-even-more}
\aFr(i,j) \ \aFr(k,\ell) \ \le \ \aFr(i,\ell) \ \aFr(k,j)
 \quad \text{for all} \quad i \le k, \ts j \le \ell\ts.
\end{equation}
\end{conj}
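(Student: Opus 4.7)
The plan is to deduce Conjecture \ref{conj:CP-gen-even-more} directly from Theorem \ref{t:main-gen} for posets of width two, by observing that the ``even more'' form differs from Conjecture \ref{conj:CP-gen} only by including the trivial boundary cases where two of the four indices coincide. After relabeling, both conjectures assert the same inequality $\aFr(a,c)\,\aFr(b,d) \le \aFr(a,d)\,\aFr(b,c)$ on a $2\times 2$ grid of index pairs; Conjecture \ref{conj:CP-gen} requires the strict inequalities $a<b$ and $c<d$, while Conjecture \ref{conj:CP-gen-even-more} merely requires $a\le b$ and $c\le d$.

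First I would handle the interior case $i < k$ and $j < \ell$. Setting the CP-gen parameters to $k' := i$, $\ell' := j$, $i' := k - i \geq 1$, and $j' := \ell - j \geq 1$, the inequality \eqref{eq:CP-ineq-gen-even-more} rewrites as
\[
\aFr(k', \ell') \, \aFr(k'+i', \ell'+j') \ \le \ \aFr(k', \ell'+j') \, \aFr(k'+i', \ell'),
\]
which is exactly \eqref{eq:CP-ineq-gen}. Since $i', j', k', \ell' \geq 1$, Theorem \ref{t:main-gen} supplies the desired bound for posets of width two. For the boundary cases: if $i = k$, both sides of \eqref{eq:CP-ineq-gen-even-more} equal $\aFr(i,j) \, \aFr(i,\ell)$; if $j = \ell$, both sides equal $\aFr(i,j) \, \aFr(k,j)$. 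In each case the inequality holds as an equality.

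There is essentially no obstacle in the argument -- Conjecture \ref{conj:CP-gen-even-more} is a notationally convenient rephrasing of Conjecture \ref{conj:CP-gen} that automatically absorbs the degenerate index configurations. The payoff is clarity in the applications of Section \ref{s:power}: the reductions to the Kahn--Saks, Graham--Yao--Yao, and XYZ inequalities are cleaner when the inequality is stated symmetrically over the full range $i \le k$, $j \le \ell$, so that one need not separately treat the cases where some of the index differences are zero.
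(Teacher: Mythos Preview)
Your reduction misses the main content of Conjecture~\ref{conj:CP-gen-even-more}. You have correctly observed that relaxing $a<b$, $c<d$ to $a\le b$, $c\le d$ is trivial, but that is \emph{not} the substantive extension being made. The paper states explicitly, immediately after Conjecture~\ref{conj:CP-gen-even-more}: ``Substantively, the only difference is that in notation of Conjecture~\ref{conj:CP-gen} we now allow integers $i$ and $j$ to be \emph{negative}.'' In Conjecture~\ref{conj:CP-gen} all four indices $k,\ell,k+i,\ell+j$ are required to be $\ge 1$; in Conjecture~\ref{conj:CP-gen-even-more} there is no positivity constraint at all, only $i\le k$ and $j\le\ell$. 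Your ``interior case'' argument silently assumes $k':=i\ge 1$ and $\ell':=j\ge 1$, which is not part of the hypothesis, so your invocation of Theorem~\ref{t:main-gen} is unjustified for any configuration with a nonpositive index.

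This omission is not cosmetic. The application to the $XYZ$ inequality in~\S\ref{ss:power-XYZ} uses precisely the mixed-sign range (the sum in~\eqref{eq:xyzdifference} runs over $i<0$, $j<0$, $k>0$, $\ell>0$), so the negative-index case is the entire point of introducing the ``even more'' form. The paper's route to Theorem~\ref{t:main-gen-even-more} is not a black-box reduction to Theorem~\ref{t:main-gen}; rather, one observes that a sign pattern such as $i<0$ corresponds to $L(z_1)>L(z_2)$, relabels the triple $(z_1,z_2,z_3)$ accordingly, and then reruns the algebraic proof of Sections~\ref{s:characteristic matrix}--\ref{s:proof of CPC} verbatim for the relabeled poset. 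To repair your argument you would need to carry out these sign/relabeling reductions explicitly and check that each one lands back in the range covered by Theorem~\ref{t:main-gen}.
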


\smallskip

Substantively, the only difference is that in notation of Conjecture~\ref{conj:CP-gen}
we now allow integers \ts $i$ \ts and \ts $j$ \ts to be negative.  This corresponds to
changing the relative order of elements $z_1,z_2,z_3$ in~\eqref{eq:z1z2z3}.
While this makes a large number of (easy) change of sign implications, the proof
of this conjecture for posets of width two follows verbatim.

\smallskip

\begin{thm}\label{t:main-gen-even-more}
Conjecture~\ref{conj:CP-gen-even-more} holds for
posets of width two.
\end{thm}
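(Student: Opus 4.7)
The plan is to reduce Theorem~\ref{t:main-gen-even-more} to Theorem~\ref{t:main-gen} by a sign-based case analysis, exploiting the symmetry that Theorem~\ref{t:main-gen} applies to \emph{any} triple of distinct elements $x, y, z \in X$.  I would partition the index pairs $(a, b)$ with $a, b, a+b$ all nonzero into six sign regions, each corresponding to one of the six possible relative linear orders of $z_1, z_2, z_3$ in the counted extensions.  When the four terms $(i, j), (k, \ell), (i, \ell), (k, j)$ appearing in~\eqref{eq:CP-ineq-gen-even-more} all lie in a common region, I would select the permutation $\sigma$ of $\{z_1, z_2, z_3\}$ that places these elements in increasing order in every such extension, and relabel $(x, y, z) := \sigma(z_1, z_2, z_3)$.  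Each entry $\aFr(a, b)$ then transforms into a new entry $\tilde{\aFr}(a', b')$ with $a', b' \ge 1$ via an invertible affine change of variables on the indices.

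For the identity and full reversal permutations, this change of variables preserves the rectangular $2 \times 2$ index pattern, and Theorem~\ref{t:main-gen} applied to the relabeled triple yields the desired inequality immediately.  For the remaining four permutations the pattern becomes a parallelogram in the new coordinates rather than a rectangle, so Theorem~\ref{t:main-gen} cannot be invoked as a black box.  Instead, I would re-run the algebraic proof of Theorem~\ref{t:main-gen} (Sections~\ref{s:characteristic matrix}--\ref{s:proof of CPC}) in the relabeled setting: the characteristic matrix framework, its decomposition into elementary factors, and the Cauchy--Binet identity~\eqref{eq:CB-formula} depend only on the identification of three distinct elements and not on the assumed ordering~\eqref{eq:z1z2z3}, so the same structural arguments carry through and yield the parallelogram version of the inequality, which is precisely \eqref{eq:CP-ineq-gen-even-more} after pulling back to the original labeling.

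For mixed-sign configurations, where $(i, k)$ or $(j, \ell)$ straddle zero, I would split each $\aFr(a, b)$ according to the relative order of $z_1, z_2, z_3$ and reduce to finitely many pure-sign contributions handled above.  The main obstacle will be the bookkeeping: each of the six relabelings produces a signed variant of the inequality, and one must verify that the algebraic machinery is compatible with each of them.  As the paper emphasizes, this amounts to ``a large number of (easy) change of sign implications,'' which is what ``follows verbatim'' is intended to convey; no new combinatorial or algebraic ideas beyond the proof of Theorem~\ref{t:main-gen} are required.
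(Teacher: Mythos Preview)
Your overall strategy---recognize that no new idea is needed beyond the algebraic proof in Sections~\ref{s:characteristic matrix}--\ref{s:proof of CPC}---matches the paper's, but the route you take is longer than necessary and has a gap in the mixed-sign case.

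The paper does not relabel at all.  It simply drops the normalizing assumption~\eqref{eq:z1z2z3}, lets the row index of \ts $\bG_Q$ \ts and \ts $\bH_R$ \ts range over all of \ts $\mathbb{Z}\setminus\{0\}$ \ts (so that \ts $\aF(i,j)$, $\aG(i,t)$, $\aH(j,t)$ \ts make sense for negative $i,j$), and then re-verifies Lemmas~\ref{l:crossproduct G}--\ref{l:minor H} and the Cauchy--Binet step~\eqref{eq:formula FGH Cauchy-Binet} in this extended range.  The ``large number of (easy) change of sign implications'' refers to the minor adjustments in the proofs of those lemmas when $z_1,z_2,z_3$ are not assumed totally ordered in~$P$; once that is done, \emph{every} $2\times 2$ minor of the extended $\bF_P$ is nonpositive, and~\eqref{eq:CP-ineq-gen-even-more} is exactly such a minor---no sign-region case split is needed.

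Your relabeling detour does not buy anything.  In the four ``parallelogram'' regions you correctly observe that Theorem~\ref{t:main-gen} does not apply as a black box, and you propose to re-run the algebraic proof for the relabeled triple.  But the algebraic proof, run verbatim for the relabeled triple, still yields only \emph{rectangular} minors of~$\tilde{\bF}$; to get the parallelogram minors you actually need, you are forced to extend the index range of~$\tilde{\bF}$ to negative integers---and once you do that, the parallelogram minor of~$\tilde{\bF}$ is literally the rectangular minor of the original extended~$\bF_P$.  So you end up doing the paper's direct argument anyway, just in different coordinates.

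The mixed-sign case is a genuine gap in your plan.  When, say, $i<0<k$, the four entries \ts $\aFr(i,j),\aFr(k,\ell),\aFr(i,\ell),\aFr(k,j)$ \ts already each count extensions with a single fixed relative order of $z_1,z_2,z_3$; there is nothing to ``split'', and the inequality genuinely mixes two different sign regions.  It cannot be reduced to pure-sign instances---it is exactly a $2\times 2$ minor of the extended~$\bF_P$ with one row index negative and one positive, and the only way to get it is via the extended Cauchy--Binet decomposition, i.e., the paper's direct approach.
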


\smallskip

Fix \ts $x, z\in X$ \ts and define \. $\aR(i,j):=\aR_P(i,j)$ \. as follows:
\begin{align*}
	\aR(i,j) \ := \ \big| \{\. L \in \Ec(P) \,~:~\,  L(x) =i\., \ L(z)= j \. \}\big|
\qquad \text{for all \ \. $i, \ts j \in \nn$.}
\end{align*}

\smallskip

\begin{cor}\label{c:CPC}  In notation above, we have:
	\[
	\aRr(i,j) \ \aRr(k,\ell)  \ \geq   \ \aRr(i,\ell) \ \aRr(k,j) \quad
        \text{for all \ \ $i \leq k$ \ and \ $j \leq \ell$.}
	\]
\end{cor}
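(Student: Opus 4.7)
The plan is to derive Corollary~\ref{c:CPC} from Theorem~\ref{t:main-gen-even-more} by reducing to an augmented poset. Let $Q := \widehat{0} \ast P$ be the poset obtained from $P$ by adjoining a new, unique minimum element $\widehat{0}$. Since $\widehat{0}$ is comparable with every element of $P$, the width of $Q$ remains two, and Theorem~\ref{t:main-gen-even-more} applies to $Q$.

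Every linear extension $L_Q \in \Ec(Q)$ satisfies $L_Q(\widehat{0}) = 1$, and induces a linear extension $L_P$ of $P$ via $L_P(p) = L_Q(p) - 1$. Taking $z_1 = \widehat{0}$, $z_2 = x$, $z_3 = z$ in the definition of the correlation matrix (Section~\ref{ss:back-matrix}), one directly computes
\[
\aF_Q(a, b) \;=\; \bigl|\{L_Q \in \Ec(Q) \,:\, L_Q(x) - 1 = a,\ L_Q(z) - L_Q(x) = b\}\bigr| \;=\; \aR_P(a,\,a+b),
\]
so that $\aR_P(i, j) = \aF_Q(i,\,j - i)$ for all $i, j \in \nn$.

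Under this identification, Corollary~\ref{c:CPC} becomes a $2 \times 2$ minor inequality for $\aF_Q$ at the four entries $(i, j - i),\ (i, \ell - i),\ (k, j - k),\ (k, \ell - k)$. These form a \emph{sheared parallelogram} in the index lattice of $\aF_Q$, whereas Theorem~\ref{t:main-gen-even-more} directly controls only minors on \emph{axis-aligned rectangles}. The crux of the argument is therefore to derive this parallelogram inequality from the rectangular ones.

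The main obstacle is this last step. My plan is to use the Cauchy--Binet formula recalled in Section~\ref{ss:back-CB}: after factoring $\aF_Q$ as a product of elementary characteristic matrices in the style of Sections~\ref{s:characteristic matrix}--\ref{s:proof of CPC}, the shear becomes transparent at the level of the factors, and the sign of the parallelogram minor propagates from rectangular minors through the factorisation. Alternatively, one can argue combinatorially via the lattice-path model of Sections~\ref{s:lattice}--\ref{s:proof of theorem qCP} and construct a Lindstr\"om--Gessel--Viennot-type injection between the pairs of linear extensions counted by the two sides of the desired inequality, with the fixed positions $L(x)$ and $L(z)$ pinning specific nodes of the paired paths before the first-intersection swap.
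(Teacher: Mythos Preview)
Your reduction has a concrete gap, and it stems from putting \(\widehat 0\) in the wrong slot. You take \(z_1=\widehat 0\), \(z_2=x\), \(z_3=z\), which indeed yields \(\aR_P(i,j)=\aF_Q(i,\,j-i)\). But then the desired inequality \(\aR(i,j)\,\aR(k,\ell)\ge\aR(i,\ell)\,\aR(k,j)\) becomes a relation among the four entries \(\aF_Q(i,j-i)\), \(\aF_Q(i,\ell-i)\), \(\aF_Q(k,j-k)\), \(\aF_Q(k,\ell-k)\), which, as you note, is a sheared parallelogram rather than a rectangle in the index lattice. Theorem~\ref{t:main-gen-even-more} controls only axis-aligned \(2\times2\) minors. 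Your two proposed remedies are not proofs: Cauchy--Binet as in~\eqref{eq:CB-formula} propagates signs of rectangular minors through matrix products, it does not convert parallelogram minors into rectangular ones; and the lattice-path injection you allude to is simply not constructed.

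The paper's proof avoids this obstacle entirely by choosing \(\widehat 0\) as the \emph{middle} element, i.e.\ \(x\gets x\), \(y\gets\widehat 0\), \(z\gets z\). Since \(L_Q(\widehat 0)=1\) is constant, both differences \(L_Q(\widehat 0)-L_Q(x)\) and \(L_Q(z)-L_Q(\widehat 0)\) are anchored at the same value, so \(\aF_Q(a,b)\) depends on \(L_P(x)\) and \(L_P(z)\) \emph{separately}: one gets \(\aR_P(i,j)=\aF_Q(-i,\,j)\). Under this substitution the four-term inequality becomes an honest rectangular \(2\times2\) minor of \(\aF_Q\) (after reversing the first index, which flips the sign), and Theorem~\ref{t:main-gen-even-more} applies directly with no further work. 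In short: your choice of \(z_1=\widehat 0\) couples the two coordinates through the shear \(j\mapsto j-i\); the paper's choice of \(y=\widehat 0\) decouples them.
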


\begin{proof}
This inequality follows immediately from Theorem~\ref{t:main-gen-even-more},
by setting $x\gets x$, $y\gets\wh 0$, and $z\gets z$, where \ts $\wh 0$ \ts
is a global minimal element added to~$P$.  The details are
straightforward.
\end{proof}

\medskip

\subsection{GYY inequality} \label{ss:power-GYY}
For the rest of this section we use a probabilistic language
on the set  $\Ec(P)$ of linear extensions of~$P$.

An \defn{event} is a subset of $\Ec(P)$.
A \defn{forward atomic event} is an event that is of the form
\[
\bigl\{\. L \in \Ec(P) \,~:~\, L(\alpha_i) < L(\beta_j) \.  \bigr\},
\]
for some \ts $\alpha_i \in \Cr_1$ \ts and \ts $\beta_j \in \Cr_2$.
A \defn{forward event}~$A$ is an intersection \. $A_1 \cap \ldots \cap A_k$ \.
of forward atomic events $A_1,\ldots, A_k$.  We denote by \ts $\Pb := \Pb_{P}$ \ts
the uniform measure on linear extensions of~$P$.

\smallskip

\begin{thm}[{\cite[Thm~1]{GYY}}]\label{t:GYY}
	Let $P$ be a finite poset of width two, and let $A$ and
$B$ be forward events. Then:
	\begin{equation}\label{eq:GYY}
	\Pb[A \cap  B] \ \geq \  \Pb[A] \, \Pb[B]\ts.
	\end{equation}
\end{thm}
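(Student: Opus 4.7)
Plan: The strategy is to encode linear extensions of a width-two poset as elements of a finite distributive sublattice, on which the uniform measure is automatically log-supermodular, and then invoke the FKG inequality.

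First, I would encode each $L \in \Ec(P)$ by its \emph{jump sequence} $\sigma_L : [\ana] \to \{0,1,\ldots,\bnb\}$ defined by $\sigma_L(i) := L(\alpha_i) - i$, which counts the elements of $\Cr_2$ that precede $\alpha_i$ in $L$. Since $\alpha_1 \prec \ldots \prec \alpha_{\ana}$ in $\Cr_1$, the sequence $\sigma_L$ is weakly increasing, and $L \mapsto \sigma_L$ is a bijection between $\Ec(P)$ and a subset $\Sigma \subseteq \{0,\ldots,\bnb\}^{\ana}$.

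Second, under this encoding each forward atomic event $\{L(\alpha_i) < L(\beta_j)\}$ translates into the coordinatewise upper bound $\{\sigma_L(i) \leq j-1\}$, so any forward event is a down-set in the product order. The relations of $P$ translate coordinatewise as well: $\alpha_i \prec_P \beta_j$ forces $\sigma_L(i) \leq j-1$, while $\beta_j \prec_P \alpha_i$ forces $\sigma_L(i) \geq j$. Combined with the weak-monotonicity condition $\sigma(i) \leq \sigma(i+1)$, also preserved coordinatewise, this shows that $\Sigma$ is a finite distributive sublattice of $\{0,\ldots,\bnb\}^{\ana}$.

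Third, I would apply FKG. The uniform measure on $\Sigma$, extended by zero to the ambient product lattice, is trivially log-supermodular: the inequality $\mu(\sigma \wedge \sigma')\,\mu(\sigma \vee \sigma') \geq \mu(\sigma)\,\mu(\sigma')$ holds with equality $1/|\Sigma|^2$ when $\sigma,\sigma' \in \Sigma$ and as $\geq 0$ otherwise. Hence any two down-sets $A$ and $B$ satisfy $|A \cap B|\,|\Sigma| \geq |A|\,|B|$, which rearranges to~\eqref{eq:GYY}. Alternatively, the same conclusion can be viewed as an iteration of the two-variable log-supermodularity supplied by Corollary~\ref{c:CPC}, which is the sense in which the generalized CPC implies GYY.

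The main obstacle is the sublattice verification in the second step: this is where the width-two hypothesis is used decisively, since the poset relations collapse to coordinatewise bounds on the one-dimensional parameter $\sigma_L$, and both the monotonicity and the box constraints are jointly preserved under componentwise $\min$ and $\max$. For posets of higher width, the analogous position-vector encoding fails to produce a lattice, and the FKG route no longer applies.
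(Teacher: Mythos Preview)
Your proof is correct and is essentially Shepp's FKG argument, which the paper cites in \S\ref{ss:power-GYY} but deliberately does not follow.  The paper's own proof takes a different route whose whole purpose is to exhibit the GYY inequality as a \emph{consequence of the generalized CPC}: it first handles the atomic case (Lemma~\ref{l:GYY atomic}) by rewriting the events $\{L(\alpha_r)<L(\beta_s)\}$ and $\{L(\alpha_t)<L(\beta_u)\}$ as conditions on the pair $\bigl(L(\alpha_r),L(\alpha_t)\bigr)$, expanding $\Pb[A\cap B]\,\Pb[A^c\cap B^c]-\Pb[A\cap B^c]\,\Pb[A^c\cap B]$ as a double sum of terms $\aR(i,j)\aR(k,\ell)-\aR(i,\ell)\aR(k,j)$, and invoking Corollary~\ref{c:CPC} termwise.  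It then passes from atomic to general forward events by induction on the number of atomic factors, conditioning on all but one factor at a time (which produces another width-two poset).

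What each approach buys: yours is shorter, self-contained, and uses nothing from the paper; the lattice verification is exactly where width two enters, as you note.  The paper's proof is longer but fulfils the section's stated aim of deriving GYY from CPC, so it stays internal to the paper's framework.  Your closing sentence gestures at this connection via Corollary~\ref{c:CPC}, but does not actually carry it out; if you want to match the paper's intent rather than give an independent proof, that reduction is precisely what needs to be written down.
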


\smallskip

The theorem was originally proved by Graham, Yao and Yao in~\cite{GYY} using
a lattice paths argument, and soon after reproved by Shepp~\cite{She}
using the \emph{FKG inequality}.  We refer to~\eqref{eq:GYY} as the
\defn{Graham--Yao--Yao {\rm (GYY)} inequality}.

\smallskip

Below we rederive the GYY inequality first for atomic, and then for general
forward events.  The aim is to give an elementary self-contained proof of
Theorem~\ref{t:GYY}.

\medskip

\subsection{CPC implies GYY inequality} \label{ss:power-CPC-GYY}
We start with the following lemma:

\smallskip

\begin{lemma}\label{l:GYY atomic}
GYY inequality~\eqref{eq:GYY} holds for atomic forward events.
\end{lemma}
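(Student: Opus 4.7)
My plan is to first reformulate each forward atomic event as a one-variable threshold condition, and then deduce the correlation inequality from the log-supermodular (TP2) property furnished by Corollary~\ref{c:CPC} via a standard FKG argument. The reformulation rests on the fact that, because $\Cr_1$ and $\Cr_2$ are each chains, if $L\in \Ec(P)$ satisfies $L(\alpha_i)=k$ then the first $k$ positions of $L$ must contain exactly $\alpha_1,\ldots,\alpha_i$ together with $\beta_1,\ldots,\beta_{k-i}$. Hence $\beta_j$ falls after $\alpha_i$ in $L$ if and only if $j>k-i$, which gives
\[
\bigl\{L(\alpha_i)<L(\beta_j)\bigr\}\ =\ \bigl\{L(\alpha_i)< i+j\bigr\}.
\]
So every forward atomic event has the simple form $\{L(\alpha_i)<c\}$ with $c = i+j$, a condition on a single $\alpha$-element.

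Given two atomic events $A = \{L(\alpha_i)< i+j\}$ and $B = \{L(\alpha_{i'})< i'+j'\}$, I would split into two cases. If $\alpha_i=\alpha_{i'}$, then without loss of generality $j\le j'$, so $i+j\le i+j'$; hence $A\subseteq B$ and $\Pb[A\cap B]=\Pb[A]\ge \Pb[A]\ts\Pb[B]$. Otherwise $\alpha_i\ne \alpha_{i'}$, and without loss of generality $i<i'$ so $\alpha_i\prec\alpha_{i'}$ in~$P$; define the joint count
\[
\aR(p,q)\ :=\ \bigl|\bigl\{L\in \Ec(P)\,:\, L(\alpha_i)=p,\ L(\alpha_{i'})=q\bigr\}\bigr|,
\]
and apply Corollary~\ref{c:CPC} with $x\gets \alpha_i$, $z\gets \alpha_{i'}$ to conclude that $\aR$ is log-supermodular: $\aR(p,q)\ts\aR(p',q')\ge \aR(p,q')\ts\aR(p',q)$ for all $p\le p'$ and $q\le q'$. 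Both $A=\{p< i+j\}$ and $B=\{q< i'+j'\}$ are downsets in the product order on $(p,q)$.

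To finish, I would invoke the FKG / Ahlswede--Daykin inequality for log-supermodular measures on a two-dimensional grid (two decreasing events in a TP2 measure are positively correlated), which yields
\[
e(P)\cdot |A\cap B|\ \ge\ |A|\cdot |B|,
\]
equivalently $\Pb[A\cap B]\ge \Pb[A]\ts \Pb[B]$. The only non-routine step here is the 2D FKG application; if a self-contained derivation is preferred, it can be done in a few lines by expanding $e(P)\cdot |A\cap B|-|A|\cdot|B|$ as a bilinear sum over ordered pairs $(p,p')$ and $(q,q')$ and applying the TP2 inequality termwise, so I do not anticipate any serious obstruction.
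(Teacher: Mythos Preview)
Your proposal is correct and follows essentially the same approach as the paper: both start from the reformulation \ts $\{L(\alpha_i)<L(\beta_j)\}=\{L(\alpha_i)<i+j\}$ \ts and then feed the TP2 property of \ts $\aR$ \ts from Corollary~\ref{c:CPC} into a positive-correlation argument for two threshold events. The only cosmetic difference is that the paper does the final step by hand---showing \ts $\Pb[A\cap B]\ts\Pb[A^c\cap B^c]\ge\Pb[A\cap B^c]\ts\Pb[A^c\cap B]$ \ts termwise and then applying inclusion--exclusion---whereas you invoke the 2D FKG inequality as a black box (your proposed direct expansion unwinds to exactly the paper's computation).
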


%\smallskip

\begin{proof}
Let
\[
A \ = \  \bigl\{\.L \in \Ec(P) \,~:~\, L(\alpha_{r}) < L(\beta_{s})\. \bigr\},
\qquad B \  = \  \bigl\{\. L \in \Ec(P) \,~:~\,  L(\alpha_{t}) < L(\beta_{u}) \.\bigr\},
\]
where \ts  $r,t \in [\ana]$ \ts and \ts $s,u \in [\bnb]$.

Suppose \ts $L\in A$.  Then $L$ satisfies \.
$L(\alpha_r) < L(\alpha_{r+1})< \ldots < L(\alpha_{\ana})$ \.
and \. $L(\alpha_r) < L(\beta_s)< \ldots < L(\beta_{\bnb})$.
This implies that \ts $L(\alpha_r)< r+s$.  In the opposite direction,
for every \ts $L\in \Ec(P)$, \ts $L(\alpha_r)< r+s$, we have \ts $L\in A$.
We conclude:
	\begin{align*}
		A \ &= \  \bigl\{\. L \in \Ec(P) \,~:~\, L(\alpha_{r}) < r+s \. \bigr\}, \qquad
        B \ = \   \bigl\{\. L \in \Ec(P) \,~:~\,  L(\alpha_{t}) < t+u \.  \bigr\}\ts.
	\end{align*}
	Now let \ts $x=\alpha_{r}$, \ts $z = \alpha_{t}$, and let $L\in \Ec(P)$ be a uniform random linear extension of~$P$.
	Write \ts $c_1:= r+s$ \ts and \ts $c_2:=t+u$.
	Under this notation, we have:
	\begin{align*}
		\Pb[A \cap B]  \ &= \, \Pb\bigl[L(\alpha_{r})<r+s, \ts L(\alpha_{t})<t+u \bigr] \
        = \sum_{i < c_1, \ts j < c_2 } \, \Pb\bigl[L(x)=i, \ts L(z)=j\bigr] \ = \sum_{i < c_1, j < c_2 } \frac{\aR(i,j)}{e(P)}\..
	\end{align*}
By the same reasoning, we have:
	\begin{align*}
	\Pb[A \cap B^c]  \ &=  \    \sum_{i < c_1, \ell \geq  c_2 } \frac{\aR(i,\ell)}{e(P)}\ts, \qquad \Pb[A ^c \cap B]  \
    = \sum_{k \geq  c_1, j <  c_2 } \frac{\aR(k,j)}{e(P)} \ts, \qquad
	\Pb[A^c \cap B^c]  \ =  \sum_{k \geq  c_1, \ell  \geq   c_2 } \frac{\aR(k,\ell)}{e(P)}\ts.
	\end{align*}
	It then follows from these equations that
	\begin{align*}
		\Pb[A \cap B] \ \Pb[A^c \cap B^c] \ = \ \sum_{i < c_1, j < c_2 } \, \sum_{k \geq  c_1, \. \ell \geq   c_2 }
        \, \frac{\aR(i,j) \, \aR(k,\ell)}{e(P)^2}\,,\\
		\Pb[A  \cap B^c] \ \Pb[A^c \cap B]  \ = \ \sum_{i < c_1, j < c_2 } \, \sum_{k \geq  c_1, \. \ell \geq   c_2 }
        \. \frac{\aR(i,\ell) \, \aR(k,j)}{e(P)^2}\,.
	\end{align*}
	Note that in the two equations above, we have $i \leq k$ and $j \leq \ell$.
	It then follows from Corollary~\ref{c:CPC} that
	\begin{equation}\label{eq:atomic 1}
		\Pb[A \cap B] \ \Pb[A^c \cap B^c] \ \geq \
		\Pb[A \cap B^c] \ \Pb[A^c \cap B]\..
	\end{equation}
	On the other hand, by the inclusion exclusion we have:
	\begin{equation}\label{eq:atomic 2}
	\begin{split}
		\Pb[A \cap B] \ \Pb[A^c \cap B^c] \ &= \ \Pb[A \cap B] \ - \ \Pb[A\cap B] \, \Pb[A \cup B]\ts,\\
		\Pb[A \cap B^c] \ \Pb[A^c \cap B] \  &= \  \bigl(\Pb[A] -  \Pb[A\cap B]\bigr) \, \bigl(\Pb[B] -  \Pb[A\cap B]\bigr) \\
		&= \  \Pb[A] \ \Pb[B] \ - \ \Pb[A \cap B] \ \Pb[A \cup B]\..
	\end{split}
	\end{equation}
	The lemma now follows by combining~\eqref{eq:atomic 1} and~\eqref{eq:atomic 2}.
\end{proof}

\smallskip

\begin{proof}[Proof of Theorem~\ref{t:GYY}]
Let \. $A=A_1\cap \ldots \cap A_k$ \. and \. $B=B_1 \cap \ldots \cap B_\ell$ \. be
forward events, where \. $A_1,\ldots, A_k$ \. and \. $B_1,\ldots, B_\ell$ \. are
forward atomic events.  We prove the theorem by induction on \ts $k+\ell$.
The base of induction \ts $k=\ell=1$ \ts is given in Lemma~\ref{l:GYY atomic}.

For \ts $\ell>1$, let \. $C:=B_1 \cap \ldots \cap B_{\ell-1}$ \. and \. $D:=B_\ell$.
Without loss of generality, assume that \ts $\Pb[C] >0$, as otherwise \.
$\Pb[B] \leq \Pb[C]=0$ and \eqref{eq:GYY} is trivially true.
Note that
	\begin{align*}
		\Pb[A \cap B] \ = \  \Pb[A \cap C \cap D]
		\ = \ \Pb\big[A \cap D \mid C\big] \, \Pb[C]\ts.
	\end{align*}

Now let \ts $P':=(X,\prec')$ \ts be the poset for which the relation \. $\prec'$ \. is defined
by~$C$.  Formally, we have \. $x \prec' y$ \. if and only if $L(x)<L(y)$ \. for all $L \in C$.
Since \ts $\Pb[C] >0$, poset~$P'$ is well defined.  Clearly, $\Ec(P') \subseteq \Ec(P)$.

Write \. $\Pb':= \Pb_{P'}$ \. for the uniform measure on~$\Ec(P')$.
Note that the probability measure $\Pb'[\ts H \ts ]$ is equal to
the conditional probability measure \.
$\Pb[\ts H \ts \mid \ts C\ts ]$, for all \. $H \subseteq \Ec(P')$.
It then follows that
\begin{align*}
	\Pb\big[A \cap D \mid C\big] \ \Pb[C] \ \ = \ \   \Pb'[A \cap D ] \ \Pb[C] \ \  \geq  \ \  \Pb'[A] \ \Pb'[D] \ \Pb[C]\.,
\end{align*}
where the last inequality is by applying \eqref{eq:GYY} to the event \ts $A$ \ts and \ts $D$ \ts on the poset~$P'$.
Rewriting the right side of the equation above in terms of the measure \ts $\Pb$, we obtain:
\begin{align*}
	 \Pb'[A] \ \Pb'[D ] \ \Pb[C]
	\ \ & = \ \ \Pb\big[A \mid C\big] \ \Pb\big[D \mid C\big] \ \Pb[C] \ \
= \ \ \Pb\big[A \mid C\big] \ \Pb[D \cap C] \\
	&=  \ \  \Pb[A \mid C] \ \Pb[B] \quad \ \geq \quad
	\Pb\big[A\big] \ \Pb[B]\.,
\end{align*}
where the last inequality is by applying \eqref{eq:GYY} to the events \ts $A$ and \ts $C$ \ts on the poset~$P$.
The case \ts $k>1$ \ts follows analogously.
\end{proof}

\medskip

\subsection{XYZ inequality} \label{ss:power-XYZ}
This following remarkable inequality is saying that there is a positive correlation
on random linear orders of events recording partial information.

\medskip

\begin{thm}[{\rm \defn{$XYZ$ inequality}, {\rm Shepp~\cite{She-XYZ}}}]
\label{t:xyz}
Let \ts $x,y,z\in X$ \ts be distinct elements of a finite poset \ts $P=(X,\prec)$.  Then:
\begin{equation}\label{eq:XYZ}
\Pb\big[L(x)<L(y), \ts L(x) < L(z)\big] \  \ \geq \ \ \Pb\big[L(x)<L(y)\big] \ \Pb\big[L(x) < L(z)\big]\..
\end{equation}
\end{thm}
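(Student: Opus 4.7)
The plan is to deduce Theorem~\ref{t:xyz} from the even more generalized cross--product inequality~\eqref{eq:CP-ineq-gen-even-more} by recognizing the latter as log-submodularity of a certain bivariate counting function, then invoking a standard two-dimensional negative-correlation argument analogous to that of Lemma~\ref{l:GYY atomic}.

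First I would apply Conjecture~\ref{conj:CP-gen-even-more} with the labeling $z_1:=y$, $z_2:=x$, $z_3:=z$, and introduce
\[
\aH(k,\ell) \ := \ \bigl|\bigl\{\ts L \in \Ec(P) \,~:~\, L(x)-L(y)=k,\ L(z)-L(x)=\ell\ts\bigr\}\bigr|
\]
for all integers~$k,\ell$. Since the ``even more generalized'' form permits differences of either sign, \eqref{eq:CP-ineq-gen-even-more} translates directly into the log-submodularity inequality $\aH(i,j)\ts\aH(k,\ell) \le \aH(i,\ell)\ts\aH(k,j)$ for all integers $i \le k,\, j \le \ell$.

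Now let $U := \{(k,\ell) : k > 0\}$ and $V := \{(k,\ell) : \ell > 0\}$; these are up-sets in the two respective coordinates, and under the change of variables $k = L(x)-L(y)$, $\ell = L(z)-L(x)$ the XYZ events correspond as $A^c = \{L(x)>L(y)\} \leftrightarrow U$ and $B = \{L(x)<L(z)\} \leftrightarrow V$. Expanding the product gives
\[
\aH(U \cap V)\ts\aH(U^c \cap V^c) \. - \. \aH(U \cap V^c)\ts\aH(U^c \cap V) \ = \sum_{\substack{k,\ell > 0 \\ k',\ell' < 0}} \bigl[\ts \aH(k,\ell)\ts\aH(k',\ell') \. - \. \aH(k,\ell')\ts\aH(k',\ell)\ts\bigr],
\]
and applying log-submodularity termwise (since $k' < k$ and $\ell' < \ell$) makes each summand nonpositive. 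Combined with the algebraic identity $\Pb[U \cap V] - \Pb[U]\ts\Pb[V] = \Pb[U \cap V]\ts\Pb[U^c \cap V^c] - \Pb[U \cap V^c]\ts\Pb[U^c \cap V]$, valid for any two events $U,V$, this yields $\Pb[A^c \cap B] \le \Pb[A^c]\ts\Pb[B]$; substituting $\Pb[A^c] = 1 - \Pb[A]$ and $\Pb[A^c \cap B] = \Pb[B] - \Pb[A \cap B]$ then gives~\eqref{eq:XYZ}.

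The main conceptual observation---and essentially the only step beyond CPC itself---is that~\eqref{eq:CP-ineq-gen-even-more} amounts to log-submodularity in the MTP$_2$-reverse direction, which in two coordinates automatically produces negative correlation between up-sets in distinct coordinates; the familiar complementation trick then converts this negative correlation into the positive correlation~\eqref{eq:XYZ}. The only technical point to verify is that CPC can be applied with the sign convention corresponding to $z_1 = y$, $z_2 = x$, $z_3 = z$, which is exactly what the ``even more generalized'' form~\eqref{eq:CP-ineq-gen-even-more} provides, so no serious obstacle is anticipated.
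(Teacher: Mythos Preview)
Your proposal is correct and is essentially the same argument as the paper's proof of Theorem~\ref{t:CPC-XYZ}: you apply Conjecture~\ref{conj:CP-gen-even-more} with the same labeling (the paper writes $x\gets v$, $y\gets u$, $z\gets w$, which is exactly your $z_1:=y$, $z_2:=x$, $z_3:=z$), expand the cross term as a double sum, and use the inclusion--exclusion identity from Lemma~\ref{l:GYY atomic}. The only cosmetic difference is that you pass through the negative correlation of $(A^c,B)$ and then complement, whereas the paper writes the positive correlation of $(A,B)$ directly; the underlying termwise estimate is identical.
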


\medskip

We show that it follows from the (unproven) Generalized Cross--Product
Conjecture.

\medskip

\begin{thm}\label{t:CPC-XYZ}
Conjecture~\ref{conj:CP-gen-even-more} implies Theorem~\ref{t:xyz}.
\end{thm}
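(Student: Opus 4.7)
The plan is to mimic the structure of the proof of Lemma~\ref{l:GYY atomic}: translate both events $A = \{L(x)<L(y)\}$ and $B = \{L(x)<L(z)\}$ into single-coordinate conditions on an appropriate $\aF$-matrix, express $\Pb[A\cap B]-\Pb[A]\ts\Pb[B]$ as a double sum of $2\times 2$ minors of $\aF$, and apply Conjecture~\ref{conj:CP-gen-even-more} term by term. The crucial preparatory step is to choose the relabelling of $(z_1,z_2,z_3)$ so that each event becomes a coordinate half--plane; this is why we need the ``even more'' version of the conjecture that permits negative parameters.

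Concretely, I would set $z_1:=y$, $z_2:=x$, $z_3:=z$ and define, for all integers $i,j$,
\[
\aFr(i,j) \ := \ \bigl|\{L\in\Ec(P) \,:\, L(x)-L(y)=i, \ L(z)-L(x)=j\}\bigr|\ts.
\]
Since $x,y,z$ are distinct, $\aFr(i,j)=0$ whenever $i=0$ or $j=0$. Under this choice, $A=\{i\leq -1\}$ and $B=\{j\geq 1\}$, so the four joint probabilities split as
\[
\Pb[A\cap B] = \tfrac{1}{e(P)}\sum_{i\leq -1,\,j\geq 1}\aFr(i,j)\ts, \qquad \Pb[A^c\cap B^c] = \tfrac{1}{e(P)}\sum_{k\geq 1,\,\ell\leq -1}\aFr(k,\ell)\ts,
\]
and symmetrically for $\Pb[A\cap B^c]$, $\Pb[A^c\cap B]$.

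The key algebraic identity, which I would verify by direct inclusion--exclusion (exactly as in~\eqref{eq:atomic 2}), is
\[
\Pb[A\cap B]-\Pb[A]\,\Pb[B] \ = \ \Pb[A\cap B]\,\Pb[A^c\cap B^c] \. - \. \Pb[A\cap B^c]\,\Pb[A^c\cap B]\ts.
\]
Substituting the four sums above and re--indexing, both products on the right become quadruple sums over the same index set $\{i\leq -1,\ \ell\leq -1,\ k\geq 1,\ j\geq 1\}$, and the difference collapses to
\[
\Pb[A\cap B]-\Pb[A]\,\Pb[B] \ = \ \frac{1}{e(P)^2}\sum_{\substack{i,\ell\leq -1\\ j,k\geq 1}}\bigl(\aFr(i,j)\ts\aFr(k,\ell) \. - \. \aFr(i,\ell)\ts\aFr(k,j)\bigr)\ts.
\]

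To finish, each term in the sum has $i\leq -1 < 1 \leq k$ and $\ell \leq -1 < 1 \leq j$, i.e.\ $i\leq k$ and $\ell\leq j$. Swapping the roles of $j$ and $\ell$ in~\eqref{eq:CP-ineq-gen-even-more} gives precisely $\aFr(i,\ell)\ts\aFr(k,j)\leq \aFr(i,j)\ts\aFr(k,\ell)$ under the hypothesis $i\leq k$, $\ell\leq j$, so every summand is nonnegative and~\eqref{eq:XYZ} follows. The only place where care is needed is verifying the direction of the inequality after the sign flip in the second coordinate: this is the sole reason one cannot make the identical argument directly from the original Generalized CPC (Conjecture~\ref{conj:CP-gen}), whose parameters are restricted to be positive, and is the main conceptual obstacle.
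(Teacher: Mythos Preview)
Your proof is correct and matches the paper's argument essentially line for line: the paper makes the same substitution (placing the XYZ element~$x$ in the middle slot of the correlation matrix), derives the same quadruple sum via~\eqref{eq:atomic 2}, and concludes by the same application of Conjecture~\ref{conj:CP-gen-even-more}. The only cosmetic difference is that the paper swaps the dummy names $j\leftrightarrow\ell$ in the summation, so their summand reads $\aFr(i,\ell)\ts\aFr(k,j)-\aFr(i,j)\ts\aFr(k,\ell)$ over $i,j<0$, $k,\ell>0$---identical to yours after relabelling.
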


\smallskip

\begin{proof}
To avoid the clash of notation, we will prove the ``$uvw$ inequality'' instead:
\begin{equation*}\label{eq:uvw}
\Pb\big[L(u)<L(v), \ts L(u) < L(w)\big] \  \ \geq \ \ \Pb\big[L(u)<L(v)\big] \ \Pb\big[L(u) < L(w)\big]\..
\end{equation*}
Let \ts $A, B \subseteq\Ec(P)$ \ts given by
	\[
        A \ := \ \bigl\{\. L \in \Ec(P)  \. : \. L(u) < L(v)\. \bigr\}, \qquad
        B \ := \ \big\{\. L \in \Ec(P)  \. : \. L(u) < L(w)\. \big\}\ts.
    \]
The theorem can then be restated as
$$
\Pb[A \cap B] \  \geq \  \Pb[A] \ \Pb[B]\ts.
$$
In the notation of Conjecture~\ref{conj:CP-gen}, let \ts $x\gets v$, \ts $y\gets u$ \ts and \ts $z\gets w$.
Then we have:
	\begin{align*}
		\Pb[A \cap B]  \ &= \ \Pb\bigl[L(y)< L(x), L(y) < L(z)\bigr]
		\ = \    \sum_{i < 0, \, \ell >0 } \Pb\bigl[L(y)-L(x)=i, L(z)-L(y)=\ell\bigr] \\
		& = \ \sum_{i < 0, \,  \ell >0 } \frac{\aF(i,\ell)}{e(P)}\,.
	\end{align*}
	 By the same  reasoning, we have
	 	\begin{align*}
	 	\Pb[A \cap B^c]  \ =    \sum_{i < 0, \, j <0 } \frac{\aF(i,j)}{e(P)}\,, \quad \Pb[A^c \cap B]  \ =  \sum_{k >0, \, \ell >0 } \frac{\aF(k,\ell)}{e(P)} \,, \quad
	 	\Pb[A^c \cap B^c]  \ =   \sum_{k >0, \, j <0 } \frac{\aF(k,j)}{e(P)}\,.
	 \end{align*}
 It then follows from these equations that
 \begin{equation}\label{eq:xyzdifference}
 	\Pb[A \cap B] \ \Pb[A^c \cap B^c] \ - \ \Pb[A \cap B^c] \ \Pb[A^c \cap B] \ = \  \sum_{\substack{i < 0, \, j <0 \\ k >0, \, \ell >0}} \frac{\aF(i,\ell)\ \aF(k,j) \ - \  \aF(i,j)\ \aF(k,\ell)}{e(P)^2}\,.
 \end{equation}
Now note that the right side \eqref{eq:xyzdifference} is a sum of nonnegative
terms by Conjecture~\ref{conj:CP-gen-even-more}.
The rest of the proof follows verbatim the proof of Lemma~\ref{l:GYY atomic}
given above.  The minor changes in the summation ranges are straightforward.
\end{proof}

\bigskip

\section{Characteristic matrices}\label{s:characteristic matrix}

In this section we convert the basic dynamic programming approach to
computing the number of linear extensions of posets of width two into
an algebraic statement as a matrix product of certain characteristic
matrices.  These matrices will be further analyzed in the next section.

\medskip

\subsection{Recursion formula}\label{ss:char-recursion}

Let $P$ be a finite poset of width two.
Denote by  \ts $\bN_P$  \ts the \ts $\pp \times \pp$ \ts matrix with entries
\begin{equation*}\label{eqN}
	N_P(i,j) \ :=  \ \bigl| \bigl\{ L\in \Ec(P)~:~L(\beta_1)=i\,, \ L(\beta_{\bnb}) =  j+\lessr_P(\beta_{\bnb})\bigr\}\bigr|\..
\end{equation*}
%
%We will write $\bN_{P}$  as a product of simpler matrices, through the following recursion.
%
Let $\xsf_1$ be the element of $X$ given by
\begin{align*}
	\xsf_1 \ := \
	\begin{cases}
		\, \alpha_1 \, & \ \text{ if \ } \alpha_1  \. \precc_P \. \beta_1\,,\\
		\, \beta_1 \, & \ \text{ otherwise\ts.}
	\end{cases}
\end{align*}
Denote \ts $X':=X- \{\xsf_1\}$, and let \ts $P'=(X',\prec)$ \ts be the induced subposet.

\medskip

\begin{lemma}  \label{l:recursion}
Let \ts $i,j \geq 1$.
If $\xsf_1= \alpha_1$, then  we have
\begin{align}\label{eq:recursion NP 1}
	N_P(i,j) \quad = \quad
	\begin{cases}
		0 & \text{ if } \ i=1\ts,\\
		N_{P'}(i-1,j) & \text{ if } \ i >1\ts.
	\end{cases}
\end{align}
If $\xsf_1= \beta_1$, then we have
\begin{align}\label{eq:recursion NP 2}
N_P(i,j) \quad = \quad
\begin{cases}
	\sum_{k=i}^\infty N_{P'}(k,j)  & \text{ if } \ i\leq \inc_P(\xsf_1)+1,\\
	0 & \text{ if } \ i > \inc_P(\xsf_1)+1\..
\end{cases}
\end{align}
\end{lemma}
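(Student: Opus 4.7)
\medskip

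\textbf{Proof plan.} The identity asserts a dynamic-programming recursion obtained by removing the unique ``leftmost'' element $\xsf_1$ from $P$, so my plan is a direct bijection between the sets of linear extensions counted by $N_P(i,j)$ and $N_{P'}(\,\cdot\,,j)$. The preliminary observation I would establish, for both cases, is that $\xsf_1$ is a minimum of $P$: in Case~1 we have $\alpha_1 \prec_P \beta_1$, hence $\alpha_1$ is the global minimum; in Case~2 we have $\alpha_1 \not\prec_P \beta_1$, and since no $\alpha_j$ can lie below $\beta_1$ (otherwise $\alpha_1 \prec \alpha_j \prec \beta_1$), the element $\beta_1$ is a minimum. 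I would also record the arithmetic identity $\lessr_{P'}(\beta_{\bnb}) = \lessr_P(\beta_{\bnb}) - 1$ in Case~1 when $\alpha_1 \prec \beta_{\bnb}$, and analogously in Case~2 since $\beta_1 \prec \beta_{\bnb}$; this tracks the shift of $\beta_{\bnb}$'s position after deletion of $\xsf_1$.

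For Case~1, since $\alpha_1$ is the unique global minimum, every $L \in \Ec(P)$ has $L(\alpha_1) = 1$; in particular $L(\beta_1) \geq 2$, which gives $N_P(1,j)=0$. For $i > 1$, I would send $L$ to the restriction $L' := L|_{X'}$, which shifts every value down by~$1$. Then $L'(\beta_1) = i-1$ and $L'(\beta_{\bnb}) = L(\beta_{\bnb}) - 1 = j + \lessr_{P'}(\beta_{\bnb})$, which lands in $N_{P'}(i-1, j)$. Conversely, prepending $\alpha_1$ at position~$1$ to any such $L'$ recovers a valid linear extension of $P$; this yields the bijection.

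For Case~2, the map is again restriction $L \mapsto L' := L|_{X'}$, which shifts positions above $L(\beta_1) = i$ down by one. The first element of $\Cr_2$ in $P'$ is now $\beta_2$, and $L'(\beta_2) = L(\beta_2) - 1 \geq i$, while the $\beta_{\bnb}$ value matches as above. This shows that $L'$ is counted by $N_{P'}(k,j)$ for some $k \geq i$, giving an injection into $\bigsqcup_{k \geq i}\Fc_{P'}$. Injectivity is clear since $L$ is determined by $L'$ and the insertion position $i$. The only nontrivial point---and what I expect to be the main obstacle---is surjectivity: I must show that given any $L' \in \Ec(P')$ with $L'(\beta_2) \geq i$, re-inserting $\beta_1$ at position $i$ yields a valid linear extension of $P$ precisely when $i \leq \inc_P(\beta_1)+1$. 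The key structural observation is that in such an $L'$, no $\beta_r$ with $r \geq 2$ can occupy a position $<i$ (since $L'(\beta_r) \geq L'(\beta_2) \geq i$), so the first $i-1$ positions of $L'$ are filled by an initial segment of the chain $\Cr_1$, namely $\alpha_1, \ldots, \alpha_{i-1}$. Since the $\alpha_j$'s incomparable to $\beta_1$ form an initial segment of $\Cr_1$ of length $\inc_P(\beta_1)$ (as $\beta_1$ is minimal), inserting $\beta_1$ respects $\prec_P$ if and only if $\alpha_{i-1}$ is incomparable to $\beta_1$, i.e.\ $i - 1 \leq \inc_P(\beta_1)$. This simultaneously proves the bijection for $i \leq \inc_P(\beta_1)+1$ and the vanishing $N_P(i,j) = 0$ for $i > \inc_P(\beta_1)+1$, since in the latter range any candidate linear extension of $P$ would have to place some $\alpha_j$ with $j > \inc_P(\beta_1)$ (hence with $\alpha_j \succ_P \beta_1$) before $\beta_1$, a contradiction.
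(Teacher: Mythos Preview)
Your proposal is correct and follows essentially the same approach as the paper: both proofs use the restriction map $L \mapsto L' := L|_{X'}$ and analyze its fibers. Your write-up is in fact slightly more careful than the paper's in two places---you explicitly track the shift $\lessr_{P'}(\beta_{\bnb}) = \lessr_P(\beta_{\bnb}) - 1$ needed to match the second index~$j$, and you spell out why the first $i-1$ positions of $L'$ must be $\alpha_1,\ldots,\alpha_{i-1}$ and why the incomparable $\alpha_j$'s form an initial segment of $\Cr_1$---whereas the paper asserts the analogous facts more tersely.
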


\smallskip

\begin{proof}
We associate to each linear extension \ts $L\in \Ec(P)$ \ts
a restriction \ts $L'\in \Ec(P')$ defined as in~$\S$\ref{ss:back-def}.
Note that this map $\phi: \Ec(P) \to \Ec(P')$ is a surjection, since for every \ts $L'\in \Ec(P')$ \ts
we can always set \ts $L(x_1):=1$, $L(y):= L'(y)$ for all $y\ne x_1$.

There are two  possibilities.
First, if $\xsf_1 =\alpha_1$, then the map~$\phi$ is a bijection. This follows from \ts
$L(\xsf_1)=1$ \ts for every $L \in \Ec(X)$, and this implies~\eqref{eq:recursion NP 1}, as desired.

Second, if $\xsf_1 =\beta_1$, let $\ell:=\inc_P(\beta_1)+1$.
Then every linear extension \ts $L \in \Ec(P)$ \ts satisfies
$L(\beta_1) < L(\alpha_\ell)$.
This implies that, every  $x \in X$ satisfying  $L(x) < L(\beta_1)$,
is contained in $\{\alpha_1,\ldots, \alpha_{\ell-1} \}$.
This in turn implies that $L(\beta_1) \leq \ell$.
We then  conclude that \. $N_P(i,j)=0$ \. if \. $i= L(\beta_1) > \ell$\.,
which proves the second part of~\eqref{eq:recursion NP 2}.

Now suppose that $\xsf_1 =\beta_1$ and $i \leq \ell$.
Let $L' \in \Ec(P')$, and let $k:=L'(\beta_2)$.
Then  every linear extension \ts $L \in \Ec(P)$ \ts such that \ts $L\in \phi^{-1}(L')$ \ts
satisfies  \. $L(\beta_1)<L(\beta_2) = k+1$.  In fact, if \ts $i < k+1$,
then \ts $\phi^{-1}(L')$ \ts contains a linear extension \ts $L \in \Ec(P)$ \ts such that \ts $L(\beta_1)=i$.
Indeed, this is the unique linear extension \ts $L \in \Ec(P)$ \ts for which \.
$L(\alpha_{i-1}) <  L(\beta_1) < L(\alpha_i)$ \. and \ts $L|_{X'}=L'$.
Hence we have \. $N_P(i,j) = \sum_{k=i}^\infty N_{P'}(k,j)$\.,
which proves the first part of~\eqref{eq:recursion NP 2}.
This completes the proof of the lemma.
\end{proof}

\medskip

\subsection{Main definitions}\label{ss:char-main-def}
% We now turn this recursion into a product formula for $\bN_P$.
Define the \defn{minimal linear extension} $\Lc$ of $P$ to be  the unique linear
extension of~$P$, such that \ts $\Lc(x) \leq  \Lc(y)$ \ts if \ts $x \prec y$, and \ts
$\Lc(y) \leq  \Lc(x)$ \ts if \ts $x \nprec y$, for all $x \in \Cr_1$ and $y \in \Cr_2$.
Equivalently, $\Lc$ is the linear extension of $P$ which assigns the smallest possible
values to the elements of $\Cr_2$.
Note that $\xsf_1$ in the previous recursion is equal to \ts $\Lc^{-1}(1)$.

\smallskip

Let  \ts $S:=(s_{i,j})_{i,j \geq 1}$ \ts and \ts $T:=(t_{i,j})_{i,j \geq 1}$ \ts
be the \ts $\mathbb{P} \times \mathbb{P}$ \ts matrices given by
\[  s_{i,j} \ := \
\begin{cases}
	\. 1 & \text{ if } i-j=1 \\
	\. 0 & \text{ if } i-j \neq 1
\end{cases} \.  \qquad \text{ and } \qquad
t_{i,j} \ := \
\begin{cases}
	\. 1 & \text{ if } i-j \leq  0 \\
	\. 0 & \text{ if } i-j > 0
\end{cases}\..
%\qquad \text{ for } \quad i,j\geq 1.
\]
In other words,
\begin{align*}
	S \ := \
	\begin{bmatrix}
		0 & 0 & 0 &   \\
		1 & 0 & 0 &   \ddots  \\
		0 & 1 & 0  &  \ddots \\
		  &  \ddots  &  \ddots & \ddots
	\end{bmatrix},
	\qquad
	T \ := \
	\begin{bmatrix}
		1 & 1 & 1 & \\
		0 & 1 & 1 &  \ddots\\
		0 & 0 & 1  & \ddots \\
		  &  \ddots  &  \ddots & \ddots
	\end{bmatrix}\..\end{align*}

\smallskip

Similarly, for $k\geq 1$, denote by  \ts $W_k:=(w_{i,j})_{i,j \geq 1}$ \ts
the  \ts $\mathbb{P} \times \mathbb{P}$ \ts matrix given by
\[ w_{i,j} \ := \
\begin{cases}
	1 & \text{ if $i=j \leq k$},\\
	0 & \text{ otherwise.}
\end{cases}
\]

In other words,
\begin{align*}
	W_1 \ := \
	\begin{bmatrix}
		1 & 0 & 0 &   \\
		0 & 0 & 0 &   \ddots  \\
		0 & 0 & 0  &  \ddots \\
		&  \ddots  &  \ddots & \ddots
	\end{bmatrix},
	\qquad
	W_2 \ := \
	\begin{bmatrix}
		1 & 0 & 0 & \\
		0 & 1 & 0 &  \ddots\\
		0 & 0 & 0  & \ddots \\
		&  \ddots  &  \ddots & \ddots
	\end{bmatrix}, \qquad \text{etc.}\end{align*}

\smallskip

\begin{definition}
Let \ts $x:=\Lc^{-1}(i)$.  \ts The \defn{characteristic matrices} \ts $M_1, \ldots, M_{\n}$ \ts of the poset~$P$
are defined as:
	\begin{equation}\label{eq:Mi}
			 M_i \ := \
		\begin{cases}
			\. S & \text{ if } \ x \in  \Cr_1\.,\\
			\. W_{\inc(x)+1} \. T & \text{ if }  \ x \in \Cr_2 \ \text{ and } \ x \neq \beta_{\bnb}\.,\\
			\. W_{\inc(x)+1} & \text{ if } \ x = \beta_{\bnb}\..
		\end{cases}
	\end{equation}
Note that \. $M_1,\ldots M_{\n}$ \. are nonnegative, nonzero matrices.
Also note that the products of these infinite matrices are well defined (as every entry below the first subdiagonal are equal to 0).
Finally, note that the product \. $M_i \vb $ \. is well defined for every vector
\ts $\vb := \bigl(\vbr(1), \vbr(2),\ldots\bigr)$ \ts with bounded support.
\end{definition}

\smallskip

\subsection{Product formula}
We now turn to the main result of this section.

\smallskip

\begin{lemma}\label{l:characteristic matrix}
	For every poset $P$ of width two, we have:
	\[ \bN_{P} \ = \   M_{1} \,  M_{2} \ \cdots \ M_{\dndt}\.,  \]
	where $\dndt:=\Lc^{-1}(\beta_{\bnb})$.
\end{lemma}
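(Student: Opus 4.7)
The plan is to induct on $\dndt = \Lc(\beta_{\bnb})$, using the bridge identity
\[
\bN_P \ = \ M_1 \cdot \bN_{P'},
\]
where $P' = P|_{X \sm \{\xsf_1\}}$. The base case $\dndt = 1$ forces $\bnb=1$ and $\xsf_1 = \beta_1 = \beta_{\bnb}$; here $\lessr_P(\beta_1)=0$, so $N_P(i,j)$ equals $1$ when $i=j$ and $1 \le i \le \inc_P(\beta_1)+1$ and $0$ otherwise, matching $M_1 = W_{\inc(\beta_1)+1}$ on the nose. For the inductive step I would first verify the bridge identity by matching Lemma~\ref{l:recursion} to matrix products: in the case $\xsf_1 = \alpha_1$, $(S \cdot \bN_{P'})(i,j) = N_{P'}(i-1,j)$ for $i \ge 2$ and vanishes at $i=1$, which is exactly \eqref{eq:recursion NP 1}; in the case $\xsf_1 = \beta_1$, $(T \cdot \bN_{P'})(i,j) = \sum_{k \ge i} N_{P'}(k,j)$, and left-multiplication by $W_{\inc(\xsf_1)+1}$ truncates this to the range $i \le \inc_P(\xsf_1)+1$, reproducing \eqref{eq:recursion NP 2}.

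Next, I would check that $M_i(P') = M_{i+1}(P)$ for $1 \le i \le \dndt - 1$. This reduces to three observations: (i) the restriction of $\Lc$ to $X'$ is the minimal linear extension $\Lc'$ of $P'$, since the tie-breaking rule ``prefer $\Cr_2$ over incomparable $\Cr_1$'' is inherited by the restriction, hence $(\Lc')^{-1}(i) = \Lc^{-1}(i+1)$; (ii) each remaining element keeps its chain membership upon passage to $P'$; and (iii) $\inc_{P'}(\beta_j) = \inc_P(\beta_j)$ for every $j \ge 2$, because $\beta_1$ is comparable to $\beta_j$ and (in the case $\xsf_1 = \alpha_1$) one has $\alpha_1 \prec \beta_1 \preceq \beta_j$, so in either subcase the removed element was not incomparable to $\beta_j$. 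Applying the induction hypothesis to $P'$ then yields $\bN_{P'} = M_1(P')\cdots M_{\dndt-1}(P') = M_2(P)\cdots M_{\dndt}(P)$, which combined with the bridge identity closes the induction.

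The main technical obstacle is the position--and--$\lessr$ bookkeeping that underlies the bridge identity: removing $\xsf_1$ shifts every other position down by one, while the removed element lies strictly below $\beta_{\bnb}$ in both subcases (since $\xsf_1 \in \{\alpha_1,\beta_1\}$ and $\dndt \ge 2$), so $\lessr(\beta_{\bnb})$ also drops by exactly one. The normalization $L(\beta_{\bnb}) = j + \lessr_P(\beta_{\bnb})$ must therefore translate precisely to $L'(\beta_{\bnb}) = j + \lessr_{P'}(\beta_{\bnb})$ under the restriction $L \mapsto L'$; this is what makes the recursion line up with $\bN_{P'}$ rather than some shifted variant. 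Once this shift is accounted for, the remaining content of the proof is routine matrix arithmetic with the infinite matrices $S$, $T$, $W_k$, whose entries below the first subdiagonal vanish so every finite product---and in particular the iterated left-multiplication---is well defined.
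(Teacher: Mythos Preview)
Your proposal is correct and follows essentially the same induction as the paper: induct on $\dndt$, handle the base case $\dndt=1$ directly, and for the step combine the recursion Lemma~\ref{l:recursion} (your ``bridge identity'' $\bN_P = M_1\,\bN_{P'}$) with the shift $M_i(P') = M_{i+1}(P)$. Your write-up is in fact more explicit than the paper's on two points the paper asserts without justification --- the identification $\Lc|_{X'} = \Lc'$ and the invariance of $\inc(\beta_j)$ --- and you correctly isolate the $\lessr_P(\beta_{\bnb})$ shift that makes the second index $j$ stay fixed across the recursion. One small imprecision: in observation~(iii) you state the invariance of $\inc_{P'}(\beta_j)$ only for $j\ge 2$, but in the subcase $\xsf_1=\alpha_1$ the element $\beta_1$ remains in $P'$ and you need $j=1$ as well; your own parenthetical argument ($\alpha_1\prec\beta_1\preceq\beta_j$) already covers it, so just widen the stated range.
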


\smallskip

\begin{proof}
	We prove the lemma by induction on the value of $\dndt$.
	Let the base case be when  $\dndt$ is equal to $1$.
	In this case, we have  $\beta_1=\beta_{\bnb}$, and $P$ has exactly $\inc(\beta_{\bnb})+1$ linear extensions, namely the linear extensions $L_i$ ($i \in \{1,\ldots, \inc(\beta_{\bnb})+1\}$)
	for which $\beta_{\bnb}$ is the $i$-th smallest element of the linear extension.
	It then follows that, for all $i,j \geq 1$,
	\[ N_P(i,j) \ = \
	\begin{cases}
		\. 1 & \text{ if } \ i=j \  \text{ and } \ i \leq \inc(\beta_{\bnb})+1,\\
		\. 0 & \text{ otherwise.}
		\end{cases}  \]
	This implies that \. $\bN_P=W_{\inc(\beta_{\bnb})+1}$\.,
	which proves the base case.
	
	Now let $\xsf_1$ be the special element in the recursion outlined above,
	and
	let $P'$ be the induced subposet on  \ts $X':= X- \{\xsf_1\}$\ts\..	
%	Now let  $\xsf_1 \ := \Lc^{-1}(1)$, let  $P'$ be the induced subposet on  \ts $X':= X- \{x_1\}$\ts\., and let $\Lc'$ be the characteristic linear extension of $P'$.
	Note that the characteristic matrices \. $M_1',\ldots, M_{\dndt-1}'$ \.  of $P'$ satisfy
	\[  M_i' \ = \  M_{i+1} \qquad \text{ for } i \in \{1,\ldots, \dndt-1\}\..  \]
	Also note that, by the induction assumption, the matrix $\bN_{P'}$ for $P'$ satisfies
	\[ \bN_{P'} \quad = \quad   M_1' \, \cdots \, M'_{\dndt-1}\..\]
	Thus it suffices to show that
	\begin{equation}\label{eq:characteristic induction}
		\bN_P  \quad = \quad M_1 \bN_{P'}\..
	\end{equation}
	
We split the proof of \eqref{eq:characteristic induction} into two cases.
For the first case, suppose that \. $\xsf_1=\alpha_1$\..
	It then follows from \eqref{eq:recursion NP 1} that 	
	\.  $\bN_P= S \bN_{P'}$\..
	On the other hand, we have  \. $M_1=S$ \. by definition.
	Combining these two observations   proves  \eqref{eq:characteristic induction}
in the first case.
	
For the second case, suppose that \. $\xsf_1=\beta_1$\..
	It then follows from \eqref{eq:recursion NP 2} that 	
	 \. $\bN_P=W_{\inc_P(x_1)+1}T\bN_{P'}$\..
	On the other hand, we have \. $M_1=W_{\inc_P(x_1)+1}T$ \. by definition.
	Combining these two observations   proves \eqref{eq:characteristic induction}
in the second case.  This proves the induction step.
%The proof of the lemma is now complete.	
\end{proof}

\bigskip

\section{Cross--product relations}\label{s:cross product relation}

In this section we define an additional algebraic structures called cross--product relations,
which will be useful in checking if every $2\times 2$ minor of the matrix $\bF_P$
as in~\eqref{eq:CPconjecture-gen} is nonpositive.

\medskip

\subsection{Admissible vectors}
Let \ts $\vb= (\vbr(1), \vbr(2),\ldots) \in \nn^\pp$ \ts be a sequence of
nonnegative integers.  We say that $\vb$ is an \defn{admissible vector},
if
\[
\vbr(i) >0  \ \text{ and } \  \vbr(k) >0 \ \quad \text{ implies } \ \quad \vbr(j) >0\ts,
\]
for all \ts $ i \leq j \leq k$.
%
%We denote by $\Adm$ the set of admissible vectors.
%
The \defn{support} \ts $\supp(\vb)$ \ts is  the set \. $\{\. i\in\pp~:~\vbr(i) >0 \. \}$.
%A vector $\vb\in \nn^\pp$ is \emph{admissible} if, for all $1 \leq i \leq j \leq k$,
%\[ i\in \supp(\vb) \ \text{ and } \  k \in \supp(\vb) \quad \text{ implies } \quad j\in \supp(\vb)\., \]
%i.e., the support of $\vb$ is a closed interval in $\pp$.
For a nonzero admissible vector~$\vb$,
we denote by $\up(\vb)$ the smallest integer in the support of $\vb$,
and by $\down(\vb)$ the largest integer in the support of $\vb$.

%the set $\pp -\supp(v)$ is partitioned into two closed intervals,
%\begin{align*}
%	\up(\vb) \quad & := \quad \{\. i \in \pp \. \mid \. i \notin \supp(\vb),  \text{ and }  \.  i < j \. \text{ for all } j \in \supp(\vb) \. \},\\
%	\down(\vb) \quad & := \quad \{\. i \in \pp \. \mid \. i \notin \supp(\vb),  \text{ and }  \.  i > j \. \text{ for all } j \in \supp(\vb) \. \}\.,
%\end{align*}
%the closed interval in $\pp$ containing positive integers less than, and greater than, elements in $\supp(\vb)$, respectively.
%For the zero vector \. $\zero := (0,0,\ldots)$\., we adopt the convention that
%\. $\up(\zero) = \pp$  \. and $\. \down(\zero) = \varnothing$.

\smallskip

\begin{definition}[{\rm\defn{cross--product relation}}]
\label{def:CP-relation}
For all admissible vectors \ts $\vb$ \ts and \ts $\wb$, we write \. $\vb \. \precCP \. \wb$ \. if,
for  all \ts $1 \leq i \leq j$, we have:
	\begin{equation}\label{eqcp determinant}
	   \vbr(i) \wbr(j)  \ - \ \vbr(j) \wbr(i)  \quad = \quad \det
\begin{bmatrix}
	\vbr(i) & \wbr(i)\\
	\vbr(j) & \wbr(j)
\end{bmatrix}  \quad \geq \quad 0\..
	\end{equation}
\end{definition}

\smallskip
%Note that we have  \. $\vb \precCP \zero$ \. for all admissible $\vb \in \nn^\pp$.
Note that  \. $\precCP$ \. is not  a transitive relation, since
 we have \. $\vb \. \precCP \. \zero$ \. and \. $\zero \. \precCP \. \wb$ \. for all admissible vectors $\vb,\wb$, while
\. $\vb \precCP \wb$ \. does not  always hold.
However, the relation $\precCP$ will be a transitive relation when restricted to nonzero admissible vectors,
as shown in the next lemma.

%Also note that, when $\vb,\wb$ are nonzero admissible vectors,
%the condition \eqref{eqcp  updown} is a consequence of \eqref{eqcp determinant}.

\smallskip

\begin{lemma}\label{l:CP is transitive}
	For all nonzero admissible vectors \ts $\vb$ \ts and \ts $\wb$, we have:
	\begin{equation}\label{eqcp  updown}
		\text{$\vb \precCP \wb$ \quad implies \quad $\up(\vb) \leq  \up(\wb)$ \ \  and  \ \ $\down(\vb) \leq   \down(\wb).$}
	\end{equation}
	Furthermore, for all nonzero admissible vectors \. $\vb,\wb,\ub$\.,
	\begin{equation}\label{eqcp is transitive}
			 \vb \. \precCP \. \wb \ \text{ and } \ \wb \. \precCP \. \ub \qquad \text{ implies } \qquad  \vb \. \precCP \. \ub\..
	\end{equation}
\end{lemma}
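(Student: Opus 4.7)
The plan is to first establish (i) by exhibiting explicit witness pairs $(i,j)$ that violate \eqref{eqcp determinant} whenever the support inequalities fail, and then to deduce (ii) by a case analysis driven by the zero pattern of $\wb$, using (i) to dispose of the degenerate subcases where a direct ``multiply and chain'' argument breaks down.

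For (i), I would start by recording that admissibility forces $\supp(\vb)$ and $\supp(\wb)$ to be contiguous integer intervals, so $\up$ and $\down$ really do delimit the supports. Suppose for contradiction that $\up(\vb) > \up(\wb)$, and take $i = \up(\wb)$, $j = \up(\vb)$. Then $i < j$, $\vbr(i) = 0$, $\wbr(i) > 0$, and $\vbr(j) > 0$, so the determinant in \eqref{eqcp determinant} equals $-\vbr(j)\wbr(i) < 0$, contradicting $\vb \precCP \wb$. The symmetric choice $i = \down(\wb)$, $j = \down(\vb)$ rules out $\down(\vb) > \down(\wb)$; here $\wbr(j) = 0$, $\vbr(j) > 0$, $\wbr(i) > 0$, and one splits according to whether $\down(\wb) \geq \up(\vb)$ (so that $\vbr(i) > 0$) or not (so that $\vbr(i) = 0$), concluding in both subcases that the determinant equals $-\vbr(j)\wbr(i) < 0$.

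For (ii), fix $i \leq j$ and aim for $\vbr(i)\rub(j) \geq \vbr(j)\rub(i)$. In the main subcase $\wbr(j) > 0$ I would multiply $\vbr(i)\wbr(j) \geq \vbr(j)\wbr(i)$ by $\rub(j)$ and $\wbr(i)\rub(j) \geq \wbr(j)\rub(i)$ by $\vbr(j)$, concatenate to obtain $\vbr(i)\wbr(j)\rub(j) \geq \vbr(j)\wbr(j)\rub(i)$, and divide by $\wbr(j)$. The residual subcase $\wbr(j) = 0$ is settled by applying (i) to both $\vb \precCP \wb$ and $\wb \precCP \ub$, yielding $\up(\vb) \leq \up(\wb) \leq \up(\ub)$ and $\down(\vb) \leq \down(\wb) \leq \down(\ub)$. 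Admissibility of $\wb$ then forces either $j < \up(\wb)$, whence $i \leq j < \up(\ub)$ gives $\rub(i) = 0$, or $j > \down(\wb)$, whence $j > \down(\vb)$ gives $\vbr(j) = 0$; either way the right-hand side of the desired inequality vanishes.

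I expect the main obstacle to be exactly this zero-handling in (ii). The relation $\precCP$ is genuinely not transitive on arbitrary admissible vectors (the zero vector is an obstruction, as already observed before the lemma), so the chaining argument must be patched at every index where $\wb$ vanishes, and the correct patch turns out to be the support containment supplied by (i). Thus (i) is not just an auxiliary fact but precisely the input that makes transitivity go through on \emph{nonzero} admissible vectors.
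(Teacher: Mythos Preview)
Your proof is correct and follows essentially the same approach as the paper. For~\eqref{eqcp updown} you argue by explicit witness pairs where the paper instead shows that every $i<\up(\vb)$ forces $\wbr(i)=0$; these are equivalent. For~\eqref{eqcp is transitive} the paper first reduces to the case $\vbr(j)>0$ and $\rub(i)>0$, then uses~\eqref{eqcp updown} to conclude $\wbr(j)>0$ and $\rub(j)>0$ and chains the ratios $\vbr(i)/\vbr(j)\geq\wbr(i)/\wbr(j)\geq\rub(i)/\rub(j)$, whereas you case-split directly on $\wbr(j)$ and clear denominators by multiplying through; your organization avoids separately checking $\rub(j)>0$, but the underlying mechanism---multiplicative chaining in the main case, support containment from~\eqref{eqcp updown} to kill the degenerate case---is identical.
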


\smallskip

\begin{proof}
	We first prove \eqref{eqcp  updown}.
	Fix \. $j \in \supp(\vb)$ \. (note that $j$ exists since $\vb$ is a nonzero vector),
	and let $i$ be an integer strictly smaller than \. $\up(\vb)$.
	Note that $i \leq j$ and $\vbr(i)=0$ by definition.
	Then, we have
	\[ 0 \quad = \quad \vbr(i) \wbr(j)  \quad \geq_{\eqref{eqcp determinant}} \quad    \vbr(j) \wbr(i)\.. \]
	Since $\vbr(j) >0$, it then follows from the equation above that
	$\wbr(i)=0$.
	Since the choice of $i \in \up(\vb)$ is arbitrary,
	it follows that \. $\up(\vb) \leq  \up(\wb)$\..
	The proof that \. $\down(\vb) \leq  \down(\wb)$ \. follows from an analogous argument.
	This concludes the proof of \eqref{eqcp  updown}.
	
	We now prove \eqref{eqcp is transitive}.
%	Let $\vb,\wb,\ub \in \nn^\pp$ be admissible vectors that satisfy the condition in the lemma.
%	It is straightforward to check that \. $\up(\vb) \subseteq \up(\ub)$ \. and  \. $\down(\vb) \supseteq \down(\ub)$\..
%	It remains to be checked that $\vb$ and $\ub$ satisfy \eqref{eqcp determinant}.
	Let $i, j$ be positive integers satisfying $1 \leq i \leq j$.
	It suffices to show that
	\begin{equation}\label{eqcp vu}
		\vbr(i) \, \rub(j) \ - \  \vbr(j) \, \rub(i) \ \geq \ 0\..
	\end{equation}
	We will assume without loss of generality that \ts $\vbr(j)>0$\ts.
	Indeed, if \. $\vbr(j) =0$\.,
	then   \. $\vbr(j) \, \rub (i) =0$\., and \eqref{eqcp determinant} follows immediately.
	By an analogous  reasoning, we will also assume that \. $\rub(i) >0$\..
	
	Since  \. $\vbr(j), \rub (i) >0$ \. and \. $\down(\vb) \leq \down(\wb) \leq  \down(\ub)$ \. (by \eqref{eqcp  updown}),
	it then follows that
	$\wbr(j) >0$  and
	$\rub(j)>0$.
	We can then apply \eqref{eqcp determinant} consecutively to \. $\vb \precCP \wb$ \. and \. $\wb \precCP \ub$ \. to get
	\[ \frac{\vbr(i)}{\vbr(j)} \quad \geq \quad  \frac{\wbr(i)}{\wbr(j)}  \quad \geq \quad  \frac{\rub(i)}{\rub(j)}\..  \]
	 This proves \eqref{eqcp vu}, and the proof is complete.
\end{proof}

\smallskip

\subsection{Multiplication properties}
We now collect several properties of the matrices $S$ and $T$
% (see Section~\ref{s:characteristic matrix})
in relations to the cross--product relation.
%that will be used in the proof of Theorem~\ref{t:minor}.
Let $U= I -W_1$, which differs from the identity matrix by $U(1,1)=0$.
We now have
\begin{align}\label{eqST--TS}
	TS \ = \
		\begin{bmatrix}
		1 & 1 & 1 & \\
		1 & 1 & 1 &  \ddots\\
		0 & 1 & 1  & \ddots \\
		&  \ddots  &  \ddots & \ddots
	\end{bmatrix}\., \qquad ST \ = \  UTS  \ = \
	\begin{bmatrix}
	0 & 0 & 0 & \cdots \\
	1 & 1 & 1 &  \cdots\\
	0 & 1 & 1  & \ddots \\
	&  \ddots  &  \ddots & \ddots
\end{bmatrix}\..\
\end{align}
We also have, for all $k \geq 0$,
\begin{equation}\label{eqSW--WS}
	S\. W_{k}  \ = \ W_{k+1}S \.,
\end{equation}
and combining \eqref{eqST--TS} and \eqref{eqSW--WS} gives us
\begin{equation}\label{eqSWT--WTS}
	S\. W_{k}\. T  \ = \  W_{k+1} \. U \. T \. S\..
\end{equation}
%It is straightforward to check that \. $\precc_P$ \. is a partial order on $\nn^\pp$

\smallskip

It can be directly verified from the definition that
all \ts $2 \times 2$ \ts minors of matrices \ts $S$, $T$,
$W_k$ \ts and \ts $U$, are nonnegative.

\smallskip

\begin{lemma}\label{l:CPtransferrable}
	Let $M$ be a matrix such that
	\[
    M \ \in  \ \{ \. S,T,U\} \ \cup \ \{\. W_k \,~:~k \geq 1\. \}.
    \]
	Then, for all admissible vectors $\vb,\wb$, we have \. $M\vb$ \. and \. $M \wb$ \. are also admissible vectors.
	Furthermore,
	\begin{equation}\label{eq:CPCauchy-Binet}
		\vb \ \precCP \ \wb \qquad \text{ implies } \qquad
		M \vb \ \precCP \ M\wb\..
	\end{equation}
\end{lemma}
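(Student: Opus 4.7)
The plan is to split the lemma into two independent verifications: (i) each candidate matrix $M$ preserves admissibility, and (ii) each candidate matrix $M$ preserves the relation $\precCP$. Step (ii) will be reduced to a Cauchy--Binet-type identity analogous to~\eqref{eq:CB-formula}, together with the already-noted fact (from the paragraph just before the lemma) that all $2\times 2$ minors of $S$, $T$, $W_k$ and $U$ are nonnegative.

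For step (i), I would go case by case, using the explicit description of the support.  If $\vb$ is admissible, then $\supp(\vb)$ is an interval $\{\up(\vb),\ldots,\down(\vb)\}$ (or empty).  Then $(S\vb)(i)=\vb(i-1)$ shifts $\supp(\vb)$ up by one; $(T\vb)(i)=\sum_{k\ge i}\vb(k)$ turns it into the initial interval $\{1,\ldots,\down(\vb)\}$; $(W_k\vb)(i)=\vb(i)\cdot\one[i\le k]$ truncates it from above at $k$; and $(U\vb)(i)=\vb(i)\cdot\one[i\ge 2]$ removes the index $1$ if present.  In every case the resulting support is again an interval, so admissibility is preserved.  This is routine and I don't expect any difficulty.

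For step (ii), I would apply bilinearity of the determinant in columns.  Writing \ts$(\vb\mid\wb)$\ts for the $\pp\times 2$ matrix with columns $\vb,\wb$, the product $M\cdot(\vb\mid\wb)$ has columns $M\vb,M\wb$.  The Cauchy--Binet expansion for its $2\times 2$ minor indexed by rows $i\le j$ gives
\[
\det\begin{bmatrix}(M\vb)(i) & (M\wb)(i)\\ (M\vb)(j) & (M\wb)(j)\end{bmatrix}
\ = \sum_{1\le k<l}\det\begin{bmatrix}M(i,k) & M(i,l)\\ M(j,k) & M(j,l)\end{bmatrix}\det\begin{bmatrix}\vbr(k) & \wbr(k)\\ \vbr(l) & \wbr(l)\end{bmatrix}.
\]
For $k<l$, the first factor is a $2\times 2$ minor of $M\in\{S,T,U,W_k\}$, which is nonnegative by the remark preceding the lemma, and the second factor equals $\vbr(k)\wbr(l)-\vbr(l)\wbr(k)\ge 0$ by the defining inequality~\eqref{eqcp determinant} of $\vb\precCP\wb$.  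Hence the entire sum is nonnegative, yielding~\eqref{eq:CPCauchy-Binet}.

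The only mild obstacle is making sure the Cauchy--Binet expansion is legitimate even though $M$ is infinite-dimensional: this is fine because for admissible $\vb,\wb$ both vectors have finite support, so every occurring sum is finite and the algebraic manipulation is valid entrywise.  Once these two steps are in place, the lemma follows immediately, and it is this transferability property that will let us propagate the relation $\precCP$ through products $M_1 M_2\cdots M_{\dndt}$ of characteristic matrices in the subsequent proof of the generalized~CPC.
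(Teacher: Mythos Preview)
Your proposal is correct and matches the paper's own proof essentially verbatim: the paper dismisses step~(i) as ``straightforward by direct computation'' (you spell out the support intervals, which is fine), and for step~(ii) it applies exactly the same Cauchy--Binet expansion you wrote, invoking the nonnegativity of the $2\times 2$ minors of $M$ and the assumption $\vb\precCP\wb$.  Your remark about finite support handling the infinite-dimensional issue is a welcome addition but not discussed in the paper.
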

\smallskip
\begin{proof}
	It is straightforward to check by a direct computation that \. $M \vb$ \. and \. $M \wb$ \. are  admissible vectors.
	Now note that, by the \emph{Cauchy--Binet formula}~\eqref{eq:CB-formula} in this case, we have:
	 \begin{align*}
	 	\det   \begin{bmatrix}
	 		[M\vb](i) & [M\wb](i)\\
	 		[M\vb](j) & [M\wb](j)
	 	\end{bmatrix} \quad = \quad  \sum_{1 \leq k<\ell<\infty} \ \det \begin{bmatrix}
	 		M(i,k) & M(i,\ell) \\
	 		M(j,k) & M(j,\ell)
	 	\end{bmatrix} \ \det \begin{bmatrix}
	 		\vbr(k) & \wbr(k)\\
	 		\vbr(\ell) & \wbr(\ell)
	 	\end{bmatrix},
	 \end{align*}
for all \ts $1\leq i \leq j$.  Since \ts $\vb \. \precCP \. \wb$ \ts and every \ts $2 \times 2$ \ts
minor of $M$ is nonnegative, it follows that the right side of the equation above is nonnegative.
	 This completes the proof.
\end{proof}

\smallskip

\begin{lemma}\label{lemmonotone basic}
	For every admissible vector $\vb$ and every $k\geq 1$,
	we have \. $W_k\vb \ \precCP \ W_{k+1}U \vb $\ts.
\end{lemma}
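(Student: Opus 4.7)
My plan is to write out $W_k\vb$ and $W_{k+1}U\vb$ entrywise, observe that both are admissible, and then verify \eqref{eqcp determinant} by a short case analysis on the pair $(i,j)$.

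First I would unfold the two vectors explicitly:
\[ W_k \vb \ = \ (\vbr(1), \vbr(2), \ldots, \vbr(k), 0, 0, \ldots), \qquad W_{k+1} U \vb \ = \ (0, \vbr(2), \vbr(3), \ldots, \vbr(k+1), 0, 0, \ldots), \]
using that $U = I - W_1$ simply zeros the first coordinate. Thus the two vectors differ only at coordinates $1$ and $k+1$: on the middle range $2 \leq \ell \leq k$ they both equal $\vbr(\ell)$, while at $\ell=1$ only $W_k\vb$ is nonzero and at $\ell=k+1$ only $W_{k+1}U\vb$ is nonzero. Admissibility of each vector is immediate, since its support is the intersection of $\supp(\vb)$ with a contiguous interval.

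With these formulas in hand, for $1\leq i\leq j$ I would set
\[ D(i,j) \ := \ [W_k\vb](i) \cdot [W_{k+1}U\vb](j) \ - \ [W_k\vb](j) \cdot [W_{k+1}U\vb](i), \]
and split into cases by the location of $i$ and $j$ among $\{1\}$, $[2,k]$, $\{k+1\}$, and $>k+1$. The cases $j>k+1$ or $i>k$ are automatic, because both relevant entries of $W_k\vb$ or of $W_{k+1}U\vb$ then vanish (using $j\geq i$). The ``middle'' case $2\leq i\leq j\leq k$ is immediate since the two vectors agree with $\vb$ there, making $D(i,j)=0$. The case $i=1$ reduces to $[W_{k+1}U\vb](1)=0$ together with nonnegativity of the remaining product. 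The only genuinely nontrivial case is the transition $2\leq i\leq k$, $j=k+1$, where $[W_k\vb](j)=0$ and $D(i,j)$ collapses to $\vbr(i)\,\vbr(k+1)\geq 0$.

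I do not expect any real obstacle here; the lemma is essentially a direct verification once the two vectors are written coordinate by coordinate. The conceptual content is the observation that $W_{k+1}U\vb$ arises from $W_k\vb$ by shifting the window of nonzero entries one step to the right, which is exactly the shape that forces all $2\times 2$ minors indexed by $i\leq j$ to be nonnegative.
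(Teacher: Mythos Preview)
Your proof is correct and is exactly the ``straightforward computation'' that the paper alludes to without writing out; the paper's own proof is the single sentence ``This follows from a straightforward computation.'' Your coordinate formulas for $W_k\vb$ and $W_{k+1}U\vb$ are right, admissibility is as you say, and your case analysis covers all pairs $(i,j)$ with $1\le i\le j$.
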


\smallskip

\begin{proof}
	This follows from a straightforward computation.
\end{proof}

\smallskip

\begin{lemma}\label{l:Milogconcave}
	For every  admissible vector \ts $\vb\in \nn^\pp$ \ts and every \ts $1\le i \le n$, we have:
	\begin{equation*}
		M_iS \vb \ \precCP \  SM_i \vb\ts,
	\end{equation*}
	where $M_i$ is the characteristic matrix defined in \eqref{eq:Mi}.
\end{lemma}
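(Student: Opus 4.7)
The plan is to prove the inequality by case analysis on the three possible forms of $M_i$ given in \eqref{eq:Mi}, using the commutation identities \eqref{eqSW--WS} and \eqref{eqSWT--WTS} together with Lemma~\ref{l:CPtransferrable} and Lemma~\ref{lemmonotone basic}. The main idea is that in each case one of the two sides $M_i S \vb$ and $S M_i \vb$ can be rewritten, via the commutation identities, into a form to which a previously established cross--product relation applies.

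First I would dispense with the trivial case $M_i = S$, where $M_iS\vb = S^2\vb = SM_i\vb$ and the cross--product relation holds by reflexivity. Next, for $M_i = W_k$ (the case $x = \beta_{\bnb}$ with $k = \inc(x)+1$), I would use \eqref{eqSW--WS} in the form $W_k S = S W_{k-1}$ to rewrite $M_i S\vb = SW_{k-1}\vb$, while $SM_i\vb = SW_k\vb$. By Lemma~\ref{l:CPtransferrable} applied to $M = S$, it is enough to prove $W_{k-1}\vb \precCP W_k\vb$. This is immediate by a direct computation of the $2\times 2$ minors: the two vectors agree except at position $k$, where $W_{k-1}\vb$ vanishes and $W_k\vb$ equals $\vbr(k)\ge 0$, and every determinant in \eqref{eqcp determinant} is then either $0$ or $\vbr(i)\vbr(k) \ge 0$.

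The main case is $M_i = W_k T$ (when $x \in \Cr_2$ with $x \neq \beta_{\bnb}$). Here I would introduce the auxiliary vector $\ub := TS\vb$, which is admissible by two applications of Lemma~\ref{l:CPtransferrable}. Using the identity \eqref{eqSWT--WTS}, I rewrite
\[
S\. M_i\. \vb \ = \ S\. W_k\. T\. \vb \ = \ W_{k+1}\. U\. T\. S\. \vb \ = \ W_{k+1}\. U\. \ub,
\]
while $M_i S\vb = W_k T S\vb = W_k \ub$. The desired inequality $W_k\ub \precCP W_{k+1}U\ub$ is then exactly Lemma~\ref{lemmonotone basic} applied to the admissible vector $\ub$.

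The only substantive obstacle is aligning the rewriting in the third case so that the precise form of Lemma~\ref{lemmonotone basic} can be invoked; once the identity \eqref{eqSWT--WTS} is applied in the correct direction this reduces the statement exactly to a case already handled. All other steps are routine verifications of admissibility (via Lemma~\ref{l:CPtransferrable}) and of a single small determinant computation in Case~2.
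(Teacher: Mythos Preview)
Your proposal is correct and follows essentially the same route as the paper: the same three-case split on the form of $M_i$, the same use of $M_iS = SM_i$ when $M_i=S$, the identity \eqref{eqSWT--WTS} together with Lemma~\ref{lemmonotone basic} when $M_i = W_kT$, and the identity \eqref{eqSW--WS} reducing to a direct check when $M_i = W_k$. The only cosmetic difference is that in the $M_i = W_k$ case you reduce via Lemma~\ref{l:CPtransferrable} to $W_{k-1}\vb \precCP W_k\vb$ before computing, whereas the paper computes $SW_{k-1}\vb \precCP SW_k\vb$ directly; both are immediate.
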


\smallskip

\begin{proof}
	We split the proof into three cases.
	For the first case, suppose that \ts $M_i=S$.
	Then
	\. $M_iS=S^2=SM_i\.,$
	and the lemma immediately follows.
	For  the second case, suppose that
	\. $M_i= W_{k}T$ \. for some \. $k\geq 1$.
	We then have
	\[ M_i S \vb \  = \   W_kTS \vb \  \precCP_{\text{Lem}~\ref{lemmonotone basic}}  \  W_{k+1}UTS \vb \  =_{\eqref{eqSWT--WTS}} \  S W_k T\vb \  =  \  SM_i \vb. \]
	For the third case, suppose that $M_i=W_k$ for some $k \geq 1$.
	We then have
		\[ M_i S \vb \  = \   W_kS \vb \  =_{\eqref{eqSW--WS}}  \  SW_{k-1} \vb \quad \precCP \  S W_k \vb \  =  \quad SM_i \vb\ts, \]
		where the inequality \. $ SW_{k-1} \vb \, \precCP \, S W_k \vb $ \. follows from a direct computation.
	This completes the proof.
\end{proof}

\smallskip

\begin{lemma}
	For every nonzero  admissible vector $\vb$, we have:
	\begin{equation}\label{eq:monotone S}
		\bN_PS \ts \vb \  \precCP \   S \ts\bN_P \ts \vb\ts.
	\end{equation}
\end{lemma}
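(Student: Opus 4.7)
The plan is to proceed by induction on $\dndt = \Lc^{-1}(\beta_{\bnb})$, using the factorization $\bN_P = M_1 \bN_{P'}$ with $P' = P \setminus \{\xsf_1\}$ established in the proof of Lemma~\ref{l:characteristic matrix}. The base case $\dndt = 1$ has $\bN_P = M_1$, so the desired inequality is precisely Lemma~\ref{l:Milogconcave}.

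For the inductive step, the strategy is a two-step chain. First, invoke the inductive hypothesis applied to $P'$ to obtain $\bN_{P'} S \vb \precCP S \bN_{P'} \vb$. Since $M_1 \in \{S,\, W_k T,\, W_k\}$ is a product of matrices from the list $\{S, T, W_k\}$ treated in Lemma~\ref{l:CPtransferrable}, iterated application of that lemma promotes the bound to
\[
M_1 \bN_{P'} S \vb \; \precCP \; M_1 S \bN_{P'} \vb.
\]
Next, apply Lemma~\ref{l:Milogconcave} with the admissible vector $\bN_{P'} \vb$ in place of $\vb$ to obtain
\[
M_1 S \bN_{P'} \vb \; \precCP \; S M_1 \bN_{P'} \vb \; = \; S \bN_P \vb.
\]
Chaining these two displays via transitivity of $\precCP$ (Lemma~\ref{l:CP is transitive}) then yields the desired $\bN_P S \vb \precCP S \bN_P \vb$.

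The main obstacle, and the only genuinely subtle point, is that transitivity requires all three vectors in the chain to be nonzero admissible. I would dispose of the degenerate cases first: if $S \bN_P \vb = \zero$ then $\bN_P \vb = \zero$, and every relevant entry of both sides of the target inequality vanishes, so each defining $2\times 2$ determinant is trivially zero; likewise if $\bN_P S \vb = \zero$. In the remaining case, $\bN_P \vb \neq \zero$, whence $\bN_{P'} \vb \neq \zero$. Since $\dndt > 1$ in the inductive step, we have $\xsf_1 \neq \beta_{\bnb}$, so $M_1 \in \{S, W_k T\}$; a short direct computation using $(TS \ub)_1 = \sum_{i \geq 1} \rub(i) > 0$ for every nonzero admissible $\ub$ shows that the intermediate vector $M_1 S \bN_{P'} \vb$ is nonzero. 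Admissibility of all intermediate vectors is automatic from Lemma~\ref{l:CPtransferrable}, so transitivity applies and the induction closes. Apart from this bookkeeping around zero vectors, the argument is a direct telescoping through the three supporting lemmas.
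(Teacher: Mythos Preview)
Your proof is correct and is essentially the paper's argument repackaged as induction: the paper explicitly defines \ $\vb_i := M_1 \cdots M_i \, S \, M_{i+1} \cdots M_{\dndt}\, \vb$ \ and chains \ $\vb_{i+1} \precCP \vb_i$ \ for each~$i$ using exactly the same combination of Lemma~\ref{l:CPtransferrable} and Lemma~\ref{l:Milogconcave}, then invokes transitivity (Lemma~\ref{l:CP is transitive}) once at the end. The zero-vector bookkeeping differs only cosmetically---the paper checks whether $M_{\dndt}\vb$ or $M_{\dndt} S\vb$ vanishes and then observes that $S$ and each $M_i$ with $i<\dndt$ preserve nonzero admissible vectors, whereas you check the two endpoint vectors and verify the middle one directly---but both treatments are valid.
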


\smallskip

\begin{proof}
	For  all \. $i \in \{0,\ldots, \dndt\}$\.,
	we denote by $\vb_i$ the vector given by
	\begin{align}\label{eqvb definition}
	 \vb_i \quad & := \quad M_1 \,\cdots\, M_{i} \ S \ M_{i+1} \,\cdots\, M_{\dndt} \. \vb\..	\end{align}
	Note that each $\vb_i$ is an admissible vector by Lemma~\ref{l:CPtransferrable}.
	It  suffices to show that \. $\vb_{\dndt} \. \precCP \. \vb_0$\..
	
	Now note that, if either \. $M_{\dndt}\ts \vb$ \. or \. $M_{\dndt} \ts S\ts \vb$ \. is equal to the zero vector,
	then either \. $\vb_{0}$ \. or \. $\vb_{\dndt}$ \.
	is equal to the zero vector, and the lemma follows immediately.
	We now assume that
	\. $M_{\dndt}\ts\vb$ \. and \. $M_{\dndt} \ts S\ts\vb$ \. are nonzero vectors.
	Since all matrices $M_i$, for $i< \dndt$, and matrix~$S$ map nonzero admissible
    vectors to nonzero admissible vectors,
	 it then follows from~\eqref{eqvb definition} that $\vb_0,\ldots, \vb_{\dndt}$ are nonzero admissible vectors.
	Now note that, for all $ \in \{0,\ldots, \dndt-1\}$, we have:
	\begin{alignat*}{2}
		\vb_{i+1} \quad  &= \ && M_1 \,\cdots\, M_{i}  \ M_{i+1} \ S \ M_{i+2} \,\cdots\, M_{\dndt} \vb \\
		&  \precCP_{\text{Lem}~\ref{l:CPtransferrable}, \ \text{Lem}~\ref{l:Milogconcave}}
\quad && M_1 \,\cdots\, M_{i} \ S  \ M_{i+1}  \ M_{i+2} \ \ldots M_{\dndt} \.   \vb \  = \  \vb_{i}\..
	\end{alignat*}
	By Lemma~\ref{l:CP is transitive}, this implies that \. $\vb_{\dndt} \. \precCP \. \vb_0$\.,
	which completes the proof, as desired.	
\end{proof}

\bigskip

\section{Log-concavity}\label{s:log}

In this section we collect various variations of poset log-concave inequalities
that will be used in the first proof of Theorem~\ref{t:main}.

\medskip

\subsection{Stanley type inequalities}\label{s:log-Stanley}
% The main results of this subsection are
% the following two lemmas.
%
% \smallskip
%
Fix \ts $1 \leq k \leq \ell \leq \bnb$.  For every \ts $1\le t \le n $,
let \. $\rbb_{t} = (\rbbt_1,\rbbt_2,\ldots)$ \. be the vector  given by
\begin{equation}\label{eq:definition r}
	\rbbt_{t}(i) \ := \ \bigl|\{\.L\in \Ec(P)~:~L(\beta_k)=i \ \  \text{and} \ \ L(\beta_\ell)=t\.\}\bigr|\ts.
\end{equation}

\medskip

\begin{lemma}\label{l:rvector is decreasing}
    In notation above, $\rbb_{t}$ \. is an admissible vector. Furthermore, we have:
	\begin{align} \label{eq:logconcave r1}
        \rbb_t \, &\precCP \,  S\rbb_{m}
        \quad \text{for all} \ \ t-1 \leq m \ \ \text{and} \ \ 1\le m \le n\ts.
		%		\label{eqlogconcave r2} \rbb_u \quad &\precCP \quad  S\rbb_{u-1}.
	\end{align}
\end{lemma}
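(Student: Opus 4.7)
The plan is to prove both parts using the algebraic machinery of characteristic matrices developed in Sections~\ref{s:characteristic matrix}--\ref{s:cross product relation}, augmented by a short combinatorial argument for admissibility.

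First, for admissibility of $\rbb_t$, I would argue directly from the definition. Suppose $L_1, L_2 \in \Ec(P)$ satisfy $L_1(\beta_\ell) = L_2(\beta_\ell) = t$ and $L_1(\beta_k) < L_2(\beta_k)$. One can interpolate between them by a sequence of adjacent transpositions of $\beta_k$ with a neighboring $\alpha$-element; each such swap is legal because in a width-two poset, any $\alpha$-element sitting adjacent to $\beta_k$ in a linear extension and strictly between positions $L_1(\beta_k)$ and $L_2(\beta_k)$ must be incomparable to $\beta_k$ (otherwise the relative order of $\beta_k$ and this $\alpha$ would be forced, hence identical in $L_1$ and $L_2$, contradicting the difference $L_1(\beta_k) \ne L_2(\beta_k)$). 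Such a swap does not change the position of $\beta_\ell$, so every intermediate value of $L(\beta_k) \in [L_1(\beta_k), L_2(\beta_k)]$ is attained by some extension with $L(\beta_\ell) = t$.

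For the main inequality $\rbb_t \precCP S\rbb_m$, the plan is to realize $\rbb_t$ as a product of characteristic matrices applied to an explicit basis vector, and then reduce the claim to the monotonicity inequality~\eqref{eq:monotone S} via Lemma~\ref{l:CPtransferrable} and Lemma~\ref{l:CP is transitive}. Concretely, I would decompose $P$ along $\beta_\ell$: each linear extension counted by $\rbbt_t(i)$ factors into a prefix of length $t$ (a linear extension of a restricted subposet of $P$ ending at $\beta_\ell$) and a suffix of length $n - t$, and the position of $\beta_k$ within the prefix is encoded by the row index of an appropriate partial product of the matrices $M_1, \ldots, M_{\dndt}$. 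The analogous decomposition for $\rbb_m$ differs only in where one cuts along the $\beta$-chain, and this cut difference is what the shift matrix $S$ is designed to capture.

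The step I expect to be the main obstacle is matching $\rbb_t$ and $S\rbb_m$ precisely, since they correspond to different positions of $\beta_\ell$ and therefore to matrix-product expressions of slightly different lengths. One would need to package the difference as a right-multiplication by $S$ and then chain applications of~\eqref{eq:monotone S} with Lemma~\ref{l:CPtransferrable}, using the hypothesis $m \ge t - 1$ to control exactly how many shift steps are needed and in what order they interleave with the $M_i$'s. Ensuring that every intermediate vector is admissible (and nonzero where required by Lemma~\ref{l:CP is transitive}) is where the bulk of the technical bookkeeping will lie.
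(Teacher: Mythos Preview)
Your plan is in the right spirit---express $\rbb_t$ via characteristic-matrix products and then commute an $S$ through using~\eqref{eq:monotone S}, Lemma~\ref{l:CPtransferrable}, and Lemma~\ref{l:CP is transitive}---but the decomposition you describe is too coarse, and a key ingredient is missing.

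The paper does \emph{not} split $P$ only at $\beta_\ell$; it splits at \emph{both} $\beta_k$ and $\beta_\ell$ into three induced subposets $P_1,P_2,P_3$ (elements not above $\beta_k$; elements between $\beta_k$ and $\beta_\ell$ in the order-theoretic sense; elements not below $\beta_\ell$). The resulting factorization is
\[
\rbbt_t(i) \ = \ \bbr_{P_1}(i)\cdot N_{P_2}(i-c_1,\,t-c_2)\cdot \abr_{P_3}(t-c_2),
\]
which in matrix form reads \ $\rbb_t = \bB_{P_1}\,S^{c_1}\,\bN_{P_2}\,S\,\vb$ \ with $\vb$ a scaled standard basis vector indexed by $t-1-c_2$, and similarly \ $S\rbb_m = S\,\bB_{P_1}\,S^{c_1}\,\bN_{P_2}\,\wb$. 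The hypothesis $t-1\le m$ enters exactly once, to give $\vb\precCP\wb$ at the level of basis vectors. With a two-way split at $\beta_\ell$ alone, your ``prefix'' simultaneously encodes the positions of $\beta_k$ and $\beta_\ell$, and there is no clean way to read this off from partial products of $M_1,\ldots,M_{\dndt}$: the matrix $\bN_P$ only tracks the first and last $\beta$-elements, not an interior $\beta_k$.

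The ingredient you do not mention is the diagonal matrix $\bB_{P_1}$ coming from the $P_1$-factor. Commuting $S$ past it is \emph{not} covered by~\eqref{eq:monotone S} or Lemma~\ref{l:CPtransferrable}; it requires Lemma~\ref{lemlogconcavity 2}, which in turn rests on the log-concavity of $\bb_{P_1}$ (Lemma~\ref{l:logconcavity}). The actual chain in the paper is
\[
\rbb_t \ = \ \bB_{P_1}S^{c_1}\bN_{P_2}S\vb \ \precCP_{\eqref{eq:monotone S}} \ \bB_{P_1}S^{c_1+1}\bN_{P_2}\vb \ \precCP_{\eqref{eq:monotone ab}} \ S\bB_{P_1}S^{c_1}\bN_{P_2}\vb \ \precCP_{\eqref{eq:CPCauchy-Binet}} \ S\bB_{P_1}S^{c_1}\bN_{P_2}\wb \ = \ S\rbb_m,
\]
with the middle step being the one your outline lacks. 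Your combinatorial admissibility argument is fine and is an acceptable substitute for the paper's route (which reads admissibility off the matrix expression via Lemma~\ref{l:CPtransferrable}), but for the main inequality you need the three-way split and the $\bB_{P_1}$-commutation.
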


\medskip

For every $x \in X$, denote by \. $\qb = \qb_{x} := \bigl(\qbr(1),\qbr(2),\ldots)$ \. the vector given by
\begin{equation}\label{eq:definition q}
	\qbr(i) \, = \, \qbr_x(i) \  := \  \bigl| \{ \. L\in \Ec(P)~:~L(x) =  i\}\bigr|\ts.
\end{equation}

\medskip

\begin{lemma}\label{l:logconcavity q}
	In notation above, $\qb_x$ is an admissible vector that satisfies
	\begin{equation*}
		\qb_x \quad \precCP \quad S \qb_x.
	\end{equation*}
	
\end{lemma}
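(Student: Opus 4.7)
The lemma has two parts: the admissibility of $\qb_x$, and the cross--product relation $\qb_x \precCP S\qb_x$.

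For admissibility, I would argue that the support of $\qb_x$ is the interval $[\lessr_P(x)+1,\ n-b]$, where $b := |\{y \in X : y \succc_P x\}|$. For $x \in \Cr_2$, every element incomparable to $x$ lies in $\Cr_1$, and these incomparable elements form a contiguous subchain of $\Cr_1$; choosing how many of them to place before $x$ realizes every position in the interval, hence $\qb_x(i) > 0$ for $i$ in the support. The case $x \in \Cr_1$ is symmetric. Since the support is an interval, $\qb_x$ is admissible.

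For the cross--product relation, the plan is to reduce to the monotonicity inequality \eqref{eq:monotone S} established at the end of Section~\ref{s:cross product relation}, namely $\bN_P S\vb \precCP S\bN_P \vb$ for every nonzero admissible~$\vb$. Concretely, I would try to exhibit a width--two poset~$\tilde P$ together with a nonzero admissible vector $\vb_0$ satisfying $\vb_0 \precCP S\vb_0$, such that $\qb_x = \bN_{\tilde P}\vb_0$. Granting this, Lemma~\ref{l:CPtransferrable} applied iteratively to the factorization $\bN_{\tilde P} = M_1 \cdots M_{\dndt}$ transfers the relation $\vb_0 \precCP S\vb_0$ across the matrix product, yielding $\bN_{\tilde P}\vb_0 \precCP \bN_{\tilde P} S\vb_0$; chaining with $\bN_{\tilde P}S\vb_0 \precCP S\bN_{\tilde P}\vb_0 = S\qb_x$ from \eqref{eq:monotone S} and invoking the transitivity of $\precCP$ on nonzero admissible vectors (Lemma~\ref{l:CP is transitive}) gives $\qb_x \precCP S\qb_x$.

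The cleanest realization of this scheme is the case $x = \beta_1$: here I set $\tilde P = P$ and $\vb_0 = \one_n$, the indicator vector of $\{1,\ldots,n\}$. Summing over the second index gives $(\bN_P \one_n)(i) = \sum_{j=1}^{n} N_P(i,j) = \qb_{\beta_1}(i)$, and the relation $\one_n \precCP S\one_n$ is a direct $2\times 2$ minor check (the nontrivial entries of both $\one_n$ and $S\one_n$ are all $1$, and the boundary behaviour has the correct sign). The cases $x = \beta_{\bnb}$, $x = \alpha_1$, $x = \alpha_{\ana}$ are then handled by reversing the poset or swapping the roles of $\Cr_1$ and $\Cr_2$; one checks that although reflection reverses $\precCP$, the composition of reflection with the shift~$S$ preserves the target relation, so the base case transports cleanly.

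The main obstacle is the case of an interior element $x$ of a chain, say $x = \beta_\ell$ with $1 < \ell < \bnb$. Here the vector $\qb_x$ is not directly captured by $\bN_P$, since $\bN_P$ only tracks the positions of $\beta_1$ and $\beta_{\bnb}$. The plan is to modify $P$ into a width--two poset $\tilde P$ in which $x$ is an extremal element of one of the chains—for instance by restricting to the sub--poset of elements not strictly above $x$ and compensating for the truncation through the choice of $\vb_0$—so that $\qb_x^{P} = \bN_{\tilde P}\vb_0$ with $\vb_0 \precCP S\vb_0$. Engineering this reduction while preserving width~two and bookkeeping the index shifts (arising because removing elements changes the positions seen by the linear extension) is the technical crux; once it is carried out, the remainder of the argument is purely a transitivity chase through \eqref{eq:monotone S} and Lemma~\ref{l:CPtransferrable}, as above.
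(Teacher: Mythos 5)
Your admissibility argument is fine, and your extremal case $x=\beta_1$ is correct: it is literally the first half of Lemma~\ref{l:logconcavity}, since $\qb_{\beta_1}=\ab=\bN_P\one$. The gap is the interior case, which is the real content of the lemma and which you leave as an unexecuted plan. Moreover, the plan as formulated cannot work. For $x=\beta_k$ with $1<k<\bnb$, the set of elements preceding $x$ in any $L$ with $L(x)=i$ is a forced down-set $D_i$ (all of $\beta_1,\ldots,\beta_{k-1}$ together with $\alpha_1,\ldots,\alpha_{i-k}$), so \. $\qbr_x(i)=e(P|_{D_i})\cdot e\bigl(P|_{X\smallsetminus D_i\smallsetminus\{x\}}\bigr)$ \. (when nonzero), a product of two factors each depending on the output index~$i$. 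The first factor acts as a diagonal matrix sitting in front of everything, and hence cannot be absorbed into a single product $\bN_{\tilde P}\ts\vb_0$, where $\vb_0$ is contracted against the \emph{second} index of $\bN_{\tilde P}$. This is exactly why the paper proves the identity \. $\qb_x=\bB_{P_1}\ts S^{c_1}\ts\bN_{P_2}\ts\ab_{P_3}$ \. (the three-fold splitting from the proof of Lemma~\ref{l:rvector is decreasing} with $\ell=\bnb$, after adding a global maximum so that $k<\bnb$), and why its chain of relations needs, besides Lemma~\ref{l:CPtransferrable} and \eqref{eq:monotone S}, the commutation inequality \eqref{eq:monotone ab} of Lemma~\ref{lemlogconcavity 2} in order to move $S$ past the diagonal factor $\bB_{P_1}$; that inequality in turn rests on the second half of Lemma~\ref{l:logconcavity}, namely $\bb\precCP S\bb$, proved by a duality argument. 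None of these ingredients has a counterpart in your scheme, so the remaining work is not ``purely a transitivity chase'': without an analogue of \eqref{eq:monotone ab}, the truncation you propose cannot be ``compensated through the choice of $\vb_0$''.

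A smaller point: when chaining relations through $\precCP$, Lemma~\ref{l:CP is transitive} applies only to nonzero admissible vectors, so the degenerate case where the intermediate vectors vanish must be split off separately, as the paper does.
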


\medskip

\begin{cor}[{\rm \defn{Stanley inequality}, \cite[Thm~3.1]{Sta}}] \label{c:logconcavity-Stanley}
For every poset $P=(X,\prec)$ of width two and element $x\in X$, we have:
\begin{equation}\label{eq:stanley}
\qbr_x(i)^2  \, - \,  \qbr_x(i-1) \. \qbr_x(i+1) \ \ge \ 0\ts.
\end{equation}
\end{cor}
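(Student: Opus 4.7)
The plan is to derive this corollary as an immediate specialization of Lemma~\ref{l:logconcavity q}. Recall that the cross--product relation \. $\vb \precCP \wb$ \. from Definition~\ref{def:CP-relation} is equivalent to the nonnegativity of all $2\times 2$ minors
\[
\det \begin{bmatrix} \vbr(i) & \wbr(i) \\ \vbr(j) & \wbr(j) \end{bmatrix} \, \ge \, 0 \qquad \text{for all } 1 \le i \le j.
\]
Taking \. $\vb = \qb_x$ \. and \. $\wb = S\ts\qb_x$\., and observing that the shift matrix $S$ acts by \. $[S\ts\qb_x](k) = \qbr_x(k-1)$\., the relation \. $\qb_x \precCP S\ts\qb_x$ \. from Lemma~\ref{l:logconcavity q} yields
\[
\qbr_x(i)\,\qbr_x(j-1) \, - \, \qbr_x(j)\,\qbr_x(i-1) \ \ge \ 0 \qquad \text{for all } 1 \le i \le j.
\]

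Next, I would specialize to \. $j = i+1$\.. The inequality becomes
\[
\qbr_x(i)^2 \, - \, \qbr_x(i+1)\,\qbr_x(i-1) \ \ge \ 0,
\]
which is exactly the Stanley inequality~\eqref{eq:stanley}. No further argument is required, since boundary cases (where some of the terms vanish) are covered by the admissibility of $\qb_x$ guaranteed by Lemma~\ref{l:logconcavity q}.

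There is no real obstacle here: the corollary is a one-line specialization. All the substantive work has already been packaged into Lemma~\ref{l:logconcavity q}, whose proof in turn rests on the multiplication properties of characteristic matrices established in Section~\ref{s:cross product relation}. In this sense, the corollary serves mainly as a sanity check that our framework recovers the classical Stanley inequality as a degenerate case of the broader cross--product machinery being developed for Theorem~\ref{t:main}.
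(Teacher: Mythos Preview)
Your proof is correct and essentially identical to the paper's own argument: both invoke Lemma~\ref{l:logconcavity q} to get \. $\qb_x \precCP S\ts\qb_x$, unpack the cross--product relation using \. $[S\ts\qb_x](k)=\qbr_x(k-1)$, and specialize to consecutive indices \ts $i,\,i+1$. The paper merely presents the final step as a single $2\times 2$ determinant rather than first stating the general $i\le j$ inequality, but this is a cosmetic difference.
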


\smallskip

\begin{proof}
In notation above, by Lemma~\ref{l:logconcavity q}, we have:
	\begin{align}\label{eqlogconcave 2}
		\qbr_x(i)^2  \ - \  \qbr_x(i-1) \. \qbr_x(i+1) \quad = \quad  \det \begin{bmatrix}
			\qbr_x(i) & [S\qb](i) \\
			\qbr_x(i+1) & [S\qb](i+1) \\
		\end{bmatrix}
		\quad  \geq  \quad 0,
	\end{align}
for all $i \geq 1$.
\end{proof}

\smallskip

\begin{rem}\label{r:log-concavity-Stanley} {\rm
Lemma~\ref{c:logconcavity-Stanley} is a special case of Stanley's original log-concavity
for general posets.  Stanley's proof uses the (non-elementary) \emph{Alexandrov--Fenchel inequality}
for mixed volumes, the approach was generalized in~\cite{KS} to prove inequality~\eqref{eq:KS-ineq}.
Thus  our approach  provides the first elementary proof of \eqref{eqlogconcave 2}
for width two posets (cf.~$\S$\ref{ss:finrem-CPC}).}
\end{rem}

\smallskip

\subsection{Setting up the argument}
 We now build toward the proof of Lemma~\ref{l:rvector is decreasing} and Lemma~\ref{l:logconcavity q}.

\smallskip

Let \. $\ab = \ab_P \ts = \ts \bigl(\abr(1),\abr(2),\ldots\bigr)$ \.  and \. $\bb:= \bb_P\ts = \ts \bigl(\bbr(1),\bbr(2),\ldots\bigr)$ \. be the vectors given by
\begin{align*}
	\abr(i) \ & := \   \bigl|\{\. L\in \Ec(P)~:~L(\beta_1) =  i\.\}\bigr|\.,\\
    \bbr(i) \ & := \   \bigl|\{\. L\in \Ec(P)~:~L(\beta_{\bnb}) =  i+ \lessr_P(\beta_{\bnb})\.\}\bigr|\..
\end{align*}

\smallskip

\begin{lemma}\label{l:logconcavity}
	In notation above, \. $\ab$ \ts and \ts $\bb$ \. are admissible vectors that satisfy
	\begin{equation*}
	  \ab \ \precCP \ S \ab \quad \text{ and } \quad \bb \ \precCP \ S \bb.
	\end{equation*}
\end{lemma}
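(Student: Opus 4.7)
My plan is to first establish admissibility combinatorially, then prove $\ab \precCP S\,\ab$ by induction on $n=|X|$ using Lemma~\ref{l:recursion}, and finally derive $\bb \precCP S\,\bb$ by appealing to the dual poset. Admissibility of $\ab$ follows from a direct combinatorial observation: in a width-two poset, the set of possible values of $L(\beta_1)$ is the contiguous interval $[\lessr_P(\beta_1)+1,\,\lessr_P(\beta_1)+\inc_P(\beta_1)+1]$, since the only freedom is in how many of the elements of $\Cr_1$ incomparable to $\beta_1$ (which form a contiguous segment of the chain $\Cr_1$) are placed before $\beta_1$. The same argument gives admissibility of $\bb$.

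For the CP relation on $\ab$, the base case $n=1$ and the degenerate case $\bnb=1$ (where $\ab_P$ is just the indicator of a contiguous interval, for which the CP condition is immediate) are direct. For $n\geq 2$ with $\bnb\geq 2$, I split on $\xsf_1$. If $\xsf_1=\alpha_1$, then the smallest element of $\Cr_2$ in $P'$ is still $\beta_1$, and summing \eqref{eq:recursion NP 1} over $j$ gives $\ab_P=S\,\ab_{P'}$; Lemma~\ref{l:CPtransferrable} with $M=S$ transfers the inductive hypothesis $\ab_{P'}\precCP S\,\ab_{P'}$ directly to $\ab_P\precCP S\,\ab_P$. If $\xsf_1=\beta_1$ (with $\beta_1\neq\beta_{\bnb}$), then the smallest element of $\Cr_2$ in $P'$ is $\beta_2$, and summing \eqref{eq:recursion NP 2} over $j$ yields $\ab_P=W_k T\,\ab_{P'}$ with $k=\inc_P(\beta_1)+1$. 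Two applications of Lemma~\ref{l:CPtransferrable} (to $M=T$ and then $M=W_k$) give $\ab_P\precCP W_k T S\,\ab_{P'}$, and Lemma~\ref{l:Milogconcave} with $M_i=W_k T$ gives $W_k T S\,\ab_{P'}\precCP S\,\ab_P$. The intermediate $W_k T S\,\ab_{P'}$ is nonzero admissible (its first coordinate equals $[T S\,\ab_{P'}](1)=e(P')>0$), so Lemma~\ref{l:CP is transitive} yields $\ab_P\precCP S\,\ab_P$.

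For the CP relation on $\bb$, let $P^*=(X,\prec^*)$ be the dual poset with the reversed chain labeling so that $\beta_1^{P^*}=\beta_{\bnb}$. The $\ab$-statement, just proved for posets of all sizes, applies to $P^*$ and gives $\abr_{P^*}\precCP S\,\abr_{P^*}$. The involution $L^*(x)=n+1-L(x)$ on $\Ec(P)$ yields the identity
\[ \bb_P(k) \ =\ \abr_{P^*}(c-k), \qquad c \ :=\ n+1-\lessr_P(\beta_{\bnb}). \]
For $i\leq j$, setting $u:=c-j$ and $v:=c-i$, the desired inequality $\bb_P(i)\,\bb_P(j-1)\geq\bb_P(j)\,\bb_P(i-1)$ becomes $\abr_{P^*}(v)\,\abr_{P^*}(u+1)\geq\abr_{P^*}(u)\,\abr_{P^*}(v+1)$, which is exactly the instance of $\abr_{P^*}\precCP S\,\abr_{P^*}$ at the indices $u+1\leq v+1$. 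Admissibility of $\bb$ transfers analogously.

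The main technical obstacle is the case $\xsf_1=\beta_1$ of the induction on $\ab$: one must correctly identify $W_k T$ as the matrix relating $\ab_P$ to $\ab_{P'}$, taking care that the distinguished element of $\Cr_2$ shifts from $\beta_1$ in $P$ to $\beta_2$ in $P'$; verify nonvanishing of the intermediate vectors (via the first-coordinate computation above); and then sequence Lemmas~\ref{l:CPtransferrable}, \ref{l:Milogconcave}, and \ref{l:CP is transitive} in the right order with the right matrices. After that, the duality step for $\bb$ is a routine index manipulation.
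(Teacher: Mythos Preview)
Your proof is correct and takes essentially the same approach as the paper. The paper writes $\ab=\bN_P\one$ and invokes the already-proved relation $\bN_P S\vb\precCP S\bN_P\vb$ (equation~\eqref{eq:monotone S}) together with $\one\precCP S\one$, whereas your direct induction on $|X|$ via Lemma~\ref{l:recursion} and Lemma~\ref{l:Milogconcave} is precisely this argument unrolled one characteristic matrix at a time; the duality reduction for $\bb$ is the same in both.
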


\smallskip

\begin{proof}
	Let \ts $\one=(1,1,\ldots)$ \ts and observe that \ts $\one$ \ts is an admissible vector.
	Note that \. $\ab= \bN_P\one$ \. by definition.  Thus, Lemma~\ref{l:CPtransferrable}
    implies that $\ab$ is also admissible vector.
	
	For the first inequality, since  \. $\one \ts \precCP \ts  S\one$ \. from direct computation,
	it then follows that
%	It then follows   Lemma~\ref{l:CPtransferrable} and Lemma~\ref{lemmonotone basic}, we get
		\begin{equation*}
		\begin{split}
		\ab \ = \ 	\bN_P\one \quad  \precCP_{\eqref{eq:CPCauchy-Binet}} \quad \bN_P S \one  \quad
			 \precCP_{\eqref{eq:monotone S}}
			\  S\bN_P \one \ = \ S\ab.
		\end{split}
	\end{equation*}
Since every vector in the equation above is nonzero,
it then follows from Lemma~\ref{l:CP is transitive}
that \. $\ab \precCP S\ab$\., as desired.

For the second inequality, let \ts $P':=(X,\precc_{P'})$ \ts be the order dual of~$P$,
i.e., \. $x \precc_{P'} y$ \.  if and only if  \. $y \prec_P x$ \. for all \ts $x, y \in X$.
Let $\ab' := \ab_{P'}$.
It follows from the duality that for all \ts $i \geq 1$, we have:
\begin{align*}
 \bbr(i) \  = \ \big|\{\. L \in \Ec(P)~:~L(\beta_{\bnb})=i+c+1\. \} \big| \ = \ \big|\{\. L' \in \Ec(P')~:~L'(\beta_{\bnb})=n-i-c\. \} \big|
 \ = \  \ab'(\n-i-c),
\end{align*}
where \. $c:=\lessr_P(\beta_{\bnb})-1$\..
Since $\ab'$ is an admissible vector, it then follows that $\bb$ is also an admissible vector.
Now note that, for all $1 \leq i \leq j$, we have:
\begin{align*}
&\det   \begin{bmatrix}
	\bbr(i) & [S\bb](i)\\
	\bbr(j) & [S\bb](j)
\end{bmatrix}  \   = \ 	 \det   \begin{bmatrix}
		\bbr(i) & \bbr(i-1)\\
		\bbr(j) & \bbr(j-1)
	\end{bmatrix} \ = \
		\det   \begin{bmatrix}
		\abr'(\n-i-c) & \abr'(\n-i-c+1)\\
		\abr'(\n-j-c) & \abr'(\n-j-c+1)
	\end{bmatrix} \\
& \quad = \  \det   \begin{bmatrix}
	[S\ab'](\n-i-c+1) & \abr'(\n-i-c+1)\\
	[S\ab'](\n-j-c+1) & \abr'(\n-j-c+1)
\end{bmatrix}
\  = \   \det   \begin{bmatrix}
	\abr'(\n-j-c+1)  & [S\ab'](\n-j-c+1)  \\
\abr'(\n-i-c+1)	& [S\ab'](\n-i-c+1)
\end{bmatrix}.
\end{align*}
Note that the rows of the matrix in the right hand side are in the increasing order.
On the other hand, we also have  \. $\ab'\. \precCP \. S\ab'$ \. from the first part of the lemma,
which implies that the right side of the equation above is nonnegative.
Thus we conclude that
 that \. $\bb \. \precCP \. S\bb$, as desired.
\end{proof}

\smallskip

\smallskip

Let \. $\bA_P$ \. and \. $\bB_P$ \. be the diagonal \ts $\pp \times \pp$ \ts matrices  given by
\[ \bAr_P(i,j) \ := \
\begin{cases}
	\abr(i) & \text{ if } i=j\\
	0 &  \text{ otherwise}
\end{cases} \qquad \text{and} \qquad
    \bBr_P(i,j) \ := \
	\begin{cases}
		\bbr(j) & \text{ if } i=j\\
		0 &  \text{ otherwise}
	\end{cases}    \]

\smallskip

\begin{lemma}\label{lemlogconcavity 2}
%	Let $P$  be a poset of width two.
%	If $\vb$ is a vector in $\nn^\pp$ such that \. $\vb \. \precCP \. S\vb$\.,
	 For every admissible vector $\vb$, we have:
	 \begin{equation}\label{eq:monotone ab}
	  \bA_P S\vb \, \precCP \, S\bA_P\vb  \qquad \text{ and } \qquad  \bB_P S\vb \, \precCP \, S\bB_P\vb.
	 \end{equation}
\end{lemma}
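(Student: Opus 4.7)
The plan is to derive both cross--product relations in~\eqref{eq:monotone ab} from the previously established log-concavity properties \. $\ab \precCP S\ab$ \. and \. $\bb \precCP S\bb$ \. of Lemma~\ref{l:logconcavity}, via a direct entry-wise computation. Since $\bA_P$ and $\bB_P$ are diagonal, their action on an admissible vector $\vb$ is pointwise, which will cause the relevant $2\times 2$ determinant from Definition~\ref{def:CP-relation} to factor cleanly.

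First, I would unpack the entries explicitly. Writing $[\bA_P\vb](i) = \abr(i)\vbr(i)$ and $[S\vb](i) = \vbr(i-1)$ (with the convention $\vbr(0) := 0$), we get
\[
[\bA_P S\vb](i) \ = \ \abr(i)\.\vbr(i-1) \qquad \text{and} \qquad [S\bA_P\vb](i) \ = \ \abr(i-1)\.\vbr(i-1).
\]
Both resulting sequences are pointwise products of coordinates drawn from admissible vectors. Since the support of each admissible vector is an interval in~$\pp$, and the intersection of two intervals is again an interval, the resulting vectors $\bA_P S\vb$ and $S\bA_P\vb$ are themselves admissible.

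Next, for indices $1 \leq i \leq j$, I would compute the defining determinant directly:
\[
\det \begin{bmatrix} \abr(i)\.\vbr(i-1) & \abr(i-1)\.\vbr(i-1) \\ \abr(j)\.\vbr(j-1) & \abr(j-1)\.\vbr(j-1) \end{bmatrix}
\ = \ \vbr(i-1)\.\vbr(j-1) \. \bigl[\.\abr(i)\.\abr(j-1) \. - \. \abr(i-1)\.\abr(j)\.\bigr].
\]
The prefactor $\vbr(i-1)\.\vbr(j-1)$ is nonnegative. The bracketed expression is precisely the $(i,j)$-minor witnessing \. $\ab \precCP S\ab$ \. from Lemma~\ref{l:logconcavity}, hence nonnegative. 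The boundary case $i = 1$ is trivially zero since the entire first row of the matrix vanishes. This yields the first inequality in~\eqref{eq:monotone ab}.

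Finally, the inequality for $\bB_P$ follows by the identical computation, replacing $\ab$ with $\bb$ throughout and invoking the relation \. $\bb \precCP S\bb$ \. from the same Lemma~\ref{l:logconcavity}. There is no genuine obstacle here: the argument is a short calculation, and all of the substantive analytic content is already packaged in Lemma~\ref{l:logconcavity}, whose log-concavity statements feed directly into the bracketed factor above.
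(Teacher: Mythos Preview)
Your proof is correct and follows essentially the same approach as the paper: a direct entry-wise computation showing that the $2\times 2$ determinant factors as $\vbr(i-1)\.\vbr(j-1)$ times the minor witnessing $\ab \precCP S\ab$ from Lemma~\ref{l:logconcavity}. The paper's proof is slightly terser (it writes the inequality $[\bA_PS\vb](i)\cdot[S\bA_P\vb](j) \geq [S\bA_P\vb](i)\cdot[\bA_PS\vb](j)$ directly rather than via the determinant), but the content is identical.
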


\smallskip

\begin{proof}
	We will show only the proof of the first inequality as the other inequality is analogous.
    For all $1 \leq i \leq j$, we have:
	\begin{align*}
	 & [\bA_PS\vb](i) \, \cdot \,  [S\bA_P\vb](j) \ = \
    \abr(i) \. \abr(j-1) \. \vbr(i-1) \. \vbr(j-1) \\
	 & \hskip1.cm \geq_{\text{Lem~\ref{l:logconcavity}}}
	 \  \abr(i-1) \. \abr(j) \. \vbr(i-1) \. \vbr(j-1)
	 \  = \  [S\bA_P\vb](i) \, \cdot \, [\bA_P S\vb](j).
	\end{align*}
	This proves the claim.
\end{proof}

\medskip

% \com{SH}{Proof of Lemma~\ref{l:rvector is decreasing} has  rather serious typos in the value of $L_2(\beta_k)$, $L_2(\beta_\ell)$, $L_3(\beta_\ell)$ that I have now fixed in v29.}

\subsection{Proof of Lemma~\ref{l:rvector is decreasing}}
Let \. $1 \le i \le t \le n$, and let $L\in \Ec(P)$ be a fixed linear extension of~$P$,
such that \. $L(\beta_k)=i$ \.  and \. $L(\beta_\ell)=t$.
		We will decompose $L$ into three linear extensions $L_1, L_2, L_3$ (of smaller posets),
		where the linear extension $L_1$ will encode the total ordering of elements before $L(\beta_k)$,
		the linear extension $L_2$ will encode the total ordering of elements between $L(\beta_k)$ and $L(\beta_\ell)$,
		and the linear extension $L_3$ will encode the ordering of elements after $L(\beta_\ell)$.
		
	Let $P_1, P_2, P_3$ be the induced subposet of $P$ on the subsets of $X$ given by
	\begin{align*}
		X_1 \ &:=   \  X \.\sm  \.  \{ \. x \in X \.~:~\.  x \ \succc_P \  \beta_k   \. \},\\
		X_2 \ &:=   \  X \.\sm  \. \{ \.  x \in X \.~:~\.  \. x \ \prec_P \ \beta_k  \ \, \text{ or } \ \, x \ \succ_P \ \beta_\ell \. \},\\
		X_3 \ &:=   \  X \.\sm  \.  \{ \. x \in X \.~:~\.  x \ \precc_P \ \beta_\ell  \. \}.
	\end{align*}
	Note that $\beta_k$ is a maximal element of $X_1$,
	that $\beta_k$ is a minimal element of $X_2$ and $\beta_\ell$ is a maximal element of $X_2$,
	and that $\beta_\ell$ is a minimal element of $X_3$.
	Note also that $X_1$  contains all elements of $X$ that are smaller than $\beta_k$ w.r.t.\
    the linear extension $L$, that $X_2$ contains all elements all of $X$ that lie between $\beta_k$
    and $\beta_\ell$ w.r.t.~$L$, and that $X_3$ contains all elements of $X$ that are greater
    than  $\beta_\ell$ w.r.t.~$L$.

    Let $P_1$, $P_2$ and $P_3$ be the restrictions of $P$ to $X_1$, $X_2$ and $X_3$, respectively.
	Similarly, let \ts $L_1\in \Ec(P_1)$, \ts $L_2\in \Ec(P_2)$ and $L_3\in \Ec(P_3)$ \ts
    be the restrictions of $L$ to \ts $X_1$, $X_2$ and $X_3$, respectively.
	Note that the three linear extensions satisfy the following equations:
	\begin{equation}\label{eqtriplet Ls}
	\begin{split}
		& L_1(\beta_k) \, = \, i \,,  \quad L_2(\beta_k)  \, = \,  i- \lessr_P(\beta_k)\,, \quad  L_2(\beta_\ell) \, = \,  t-\lessr_P(\beta_k)\,, \quad L_3(\beta_\ell) \, = \, t-\lessr_P(\beta_\ell)\.,
	\end{split}
	\end{equation}
	because in $X_2$  elements strictly less than $\beta_k$ are removed, and in $X_3$ elements strictly less than $\beta_\ell$ are removed.
	On the other hand, given a triplet $(L_1,L_2,L_3)$ that satisfies \eqref{eqtriplet Ls}, we can recover the original linear extension $L$ by
	\begin{align*}
		L(x) \quad = \quad
		\begin{cases}
			L_1(x) & \text{ if } \ L_1(x) \leq i,\\
			L_2(x) + \lessr_P(\beta_k) & \text{ if } \ i -\lessr_P(\beta_k) \. \leq \. L_2(x)  \. \leq \. t- \lessr_P(\beta_k),\\
			L_3(x) + \lessr_P(\beta_\ell) & \text{ if } \ L_3(x)  \. \geq \. t- \lessr_P(\beta_\ell).
		\end{cases}
	\end{align*}
	It follows from \eqref{eqtriplet Ls} that $L$ is well-defined and is a linear extension of $X$.
	This shows that the given correspondence associating $L$ to $(L_1, L_2, L_3)$ is a bijection.	
	It then follows from the correspondence above that for all \ts $i \geq 1$,
	\begin{equation}\label{eqexpansion ru}
	\begin{split}
		\rbbt_t(i) \ & = \ \big|\{\. L \in \Ec(P) \.~:~\.    L(\beta_k)=i, \ L(\beta_\ell)=t\. \}\big| \\	
		 & = \ \big|\{\. L_1 \in \Ec(P_1) \.~:~\.    L_1(\beta_k)=i\. \} \big| \, \cdot \,  \big|\{\. L_2 \in \Ec(P_2) \.~:~\.     L_2(\beta_k)=i-c_1, \ L_2(\beta_\ell)=t-c_1\. \} \big|	 \\
		 & \hskip5.27cm  \cdot \, \big|\{\. L_3 \in \Ec(P_3) \.~:~\.    L_3(\beta_\ell)=t-c_2\. \} \big| \\
		&= \ \bbr_{P_1}(i) \
		\bN_{P_2}(i-c_1, t-c_2)  \ \abr_{P_3}(t-c_2)\.,
	\end{split}
	\end{equation}
	where \. $c_1:=\lessr_P(\beta_k)$ \. and \. $c_2:=\lessr_P(\beta_\ell)=c_1+\lessr_{P_2}(\beta_\ell)$.

	Let  $\eb_1, \eb_2,\ldots$ be the standard unit vectors for $\Rb^\pp$, and let $\vb$ and $\wb$ be two admissible vectors  given by
	\[
        \vb \ := \  \abr_{P_3}(t-1-c_2) \.\eb_{t-1-c_2} \quad \text{and} \quad  \wb \ := \  \abr_{P_3}(m-c_2) \.\eb_{m-c_2}\ts.
    \]
	Note that \. $ \vb \ts \precCP \ts \wb$ \. by the assumption that $t-1 \leq m$.
	Also note that, from \eqref{eqexpansion ru}, we have:
	\begin{equation*}
			 \rbb_t \ = \  \bB_{P_1}  S^{c_1}  \bN_{P_2} \. S \. \vb
            \quad \text{and} \quad S\rbb_{m} \ = \  S \. \bB_{P_1}   S^{c_1}  \bN_{P_2}  \wb.
	\end{equation*}
	It then follows from the equation above that $\rbb_t$ is an admissible vector.
	
	If either  \. $\bN_{P_2} S \vb$ \. or \. $\bN_{P_2}\wb$ \. is equal to the zero vector,
	then either \ts $\rbb_t$ \ts or \ts $\rbb_m$  is equal to the zero vector,
	and the lemma follows immediately.
	We now assume that   \. $\bN_{P_2} S \vb$ \. and \. $\bN_{P_2}\wb$ \ts are nonzero vectors.
	Then we have:
	\begin{align*}
	\begin{split}
		  \rbb_t \ = \  \bB_{P_1}  S^{c_1} \bN_{P_2} S \vb \
		& \precCP_{\eqref{eq:monotone S}} \   \bB_{P_1} S^{c_1+1} \bN_{P_2}\vb  \
        \precCP_{\eqref{eq:monotone ab}} \  S \.\bB_{P_1} S^{c_1} \bN_{P_2} \vb \\
		& \precCP_{\eqref{eq:CPCauchy-Binet}}    \   S \. \bB_{P_1}  S^{c_1} \bN_{P_2} \wb \ = \ S \rbb_m.
	\end{split}
	\end{align*}
	Note that  every vector in the equation above is nonnegative.
%	 (since $B_{P_1}$ and $S$ maps nonzero admissible vectors to nonzero admissible vectors).
	It then follows from Lemma~\ref{l:CP is transitive} that
	\. $\rbb_t \precCP \rbb_m$, as desired. \qed

\medskip

\subsection{Proof of Lemma~\ref{l:logconcavity q}}
By exchanging the label of $\Cr_1$ and $\Cr_2$ if necessary,
we can assume without loss of generality that $x\in\Cr_2$.
Let $k$ be the integer such that $x=\beta_k$.
By adding an extra maximum element to the poset if necessary, we can assume that $k< \bnb$.
Let $\ell := \bnb$.
	
Let $P_1, P_2, P_3$, and $c_1$ be as in the proof of Lemma~\ref{l:rvector is decreasing}.
It then follows from  the argument analogous to the proof of Lemma~\ref{l:rvector is decreasing},
that
\begin{equation}\label{eq:echo 1}
		\qb_x \ = \ \bB_{P_1}  S^{c_1} \bN_{P_2} \vb,
\end{equation}
where \ts $\vb=\ab_{P_3}$\ts.
%where \ts $\vb=\bigl(\vbr(1),\vbr(2),\ldots\bigr)$ \ts is the vector  given by
%	\[
%        \vbr(i) \ := \ \abr_{P_3}(i+1) \quad \text{ for all } \ \  i  \geq 1.
%    \]
	Since $\vb=\ab_{P_3}$ is an admissible vector by Lemma~\ref{l:logconcavity},
	it then follows from \eqref{eq:echo 1} and Lemma~\ref{l:CPtransferrable} that
	$\qb_x$ is an admissible vector.
	
We can always assume that\ts $\vb$ \ts is a nonzero vector.  Indeed, if \ts
$\vb$ is a zero vector, then both \ts $\qb$ \ts and \ts $S\qb$ \ts are equal
to the zero vector, and the lemma follows immediately.

Now note that
	\begin{equation}\label{eq:echo 2}
	\begin{split}
	\qb \quad  = \quad 	 \bB_{P_1}   S^{c_1} \bN_{P_2} \.\vb  \ & \precCP \ \bB_{P_1}  S^{c_1}  \bN_{P_2} S\. \vb  \
\precCP_{\eqref{eq:monotone S}} \ \bB_{P_1} S^{c_1+1} \bN_{P_2} \. \vb  \\
		  &  \precCP_{\eqref{eq:monotone ab}} \quad  \ S \.\bB_{P_1} S^{c_1} \bN_{P_2}  \vb
		 \  = \   S \qb.
	\end{split}
	\end{equation}
	Also note that  every vector in the equation above are nonzero vectors by assumption.	
	It then follows from Lemma~\ref{l:CP is transitive} that \. $\qb \. \precCP  \. S\qb$, as desired. \qed

\bigskip

\section{Algebraic proof  of Theorem~\ref{t:main-gen}}
\label{s:proof of CPC}

\subsection{Matrix formulation}
Let \ts $z_1,z_2,z_3\in X$ \ts be the fixed elements in the Cross--product Conjecture~\ref{conj:CP},
and let $|X|=n$.
Consider two \ts $\pp \times n$ \ts matrices \ts $\bG=\bG_P$ \ts and \ts $\bH= \bH_P$, with entries
\begin{equation}\label{eq:definition G}
\begin{split}
 G_P(i,t) \ := \ \bigl| \{ \. L\in \Ec(P)~:~L(z_1)=t-i, \, L(z_2)=t \. \}\bigr|, \\
 H_P(i,t) \ := \ \bigr|\{ \. L\in \Ec(P)~:~L(z_3)=t+i, \, L(z_2)=t \. \}\bigr|.
\end{split}
\end{equation}
These matrices are related to the matrix $\bF_P$ in Theorem~\ref{t:main} in the following way.
Let $i, j \geq 1$, $1\le t \le n$, and let \ts $L\in \Ec(P)$ \ts such that $L(z_1)=t-i$,
$L(z_2)=t$, and $L(z_3)=t+j$.  Note that $L\in \Fc(i,j)$.
We will split $L$ into two linear extensions of smaller posets,
with the former encoding the total ordering for elements before $L(z_2)$,
and the latter encoding the total ordering elements after $L(z_2)$.

Let $Q$ and $R$ be the induced subposets of $P$ on the sets
\begin{align*}
	X_1 \ := \ X - \{\. x \in X \. \mid \. x \succc_P z_2 \. \} \, \quad \text{and} \quad X_2  \ := \  X - \{\. x \in X \. \mid \. x \. \precc_P  \. z_2  \. \}\..
\end{align*}
Let  $L_1$ and $L_2$ be the restrictions of $L$ onto subsets $X_1$ and~$X_2$,
respectively.  We write \ts $(L_1,L_2)=\eta(L)$.  Note that $L_1$ and $L_2$ satisfy
\begin{alignat*}{2}
	L_1(z_2) \ &= \ t, \qquad &&L_1(z_1) \ = \  t-i,\\
	L_2(z_2) \ &= \ t-\lessr_P(z_2), \qquad && L_2(z_3) \ = \  t-\lessr_P(z_2)+j.
\end{alignat*}
On the other hand, given a pair \ts $(L_1, L_2)$ \ts that satisfies the
equation above, we can recover the original linear extension \ts $L$ \ts as
\[ L(x) \ = \
\begin{cases}
	L_1(x) & \text{ if } \  x \in X_1 \text{ and } L_1(x) \leq t,\\
	L_2(x)+\lessr_P(z_2) & \text{ otherwise.}
\end{cases}   \]
Hence the correspondence \ts $\eta: L \to (L_1,L_2)$ \ts as above is a bijection.

\smallskip

Let  \. $c:=\lessr_P(z_2)$\..
It then follows from the correspondence~$\eta$ above, that
\begin{align*}
	\aF_P(i,j)  \ & =   \  \sum_{t=1}^{\n} \ \bigl|\{\. L \in \Ec(P) \,~:~\, L(z_1) =  t-i,  \, L(z_2) =  t, \, L(z_3) =  t+j \.  \}\bigr| \\
	& = \ \sum_{t=1}^{\n} \ \bigl|\{\. L_1 \in \Ec(Q) \,~:~\,  L_1(z_1) =  t-i, \, L_1(z_2) = t \.    \}\bigr| \ \,\times \\
& \hskip2.5cm \times \
\bigl|\{\. L_2 \in \Ec(R) \,~:~\,  L_2(z_2) =  t-c, \, L_2(z_3) = t-c+j \.    \}\bigr| \\
	&=  \ \sum_{t=1}^{\n} \   G_{Q}(i,t) \,\. H_{R}(j,t-c)\.,
\end{align*}
for all \ts $i,j \geq 1$.  This is equivalent to
\begin{equation}\label{eq:formula FGH}
 \bF_P \ = \ \bigl(\bG_{Q}\bigr) \.\ts S^c \.\ts \bigl(\bH_{R}\bigr)^\top.
\end{equation}
Use the definition of \ts $S$ \ts to expand~\eqref{eq:formula FGH} as a sum,
and then apply Cauchy--Binet formula~\eqref{eq:CB-formula} to it.
We conclude:
{\small
\begin{equation}\label{eq:formula FGH Cauchy-Binet}
\begin{split}
	 & \det \begin{bmatrix}
		\aF_P(i,j) & \aF_P(i,\ell)\\
		\aF_P(k,j) & \aF_P(k,\ell)
	\end{bmatrix} \ = \  \sum_{1 \leq t \leq m \leq n} \.
\det \begin{bmatrix}
	\aG_{Q}(i,t) & \aG_{Q}(i,m)\\
	\ts\aG_{Q}(k,t) & \ts\aG_{Q}(k,m)
\end{bmatrix}  \ \det \begin{bmatrix}
	\aH_{R}(j,t-c) & \aH_{R}(\ell,t-c)\\
	\.\aH_{R}(j,m-c) & \.\aH_{R}(\ell,m-c)
\end{bmatrix}\ts,
\end{split}
\end{equation}}
for all \ts $1 \leq i \leq k$ \ts and \ts $1 \leq j \leq \ell$.
This reduces Theorem~\ref{t:main-gen} to checking signs of
\ts $2 \times 2$ \ts  minors of matrices \ts $\bG_{Q}$ \ts and \ts $\bH_{R}$
\ts separately.

\smallskip

\subsection{Matrix \ts $\bG_P$ \ts minors}
We now show that all \ts $2\times 2$ \ts minors of \ts $\bG_P$ \ts are
nonnegative, for all~$P$.  We start with the following lemma covering
a special case of this claim.

\smallskip

\begin{lemma}\label{l:crossproduct G}
Let $P=(X,\prec)$ be a poset of width two, and let $\bG_P$ be a matrix
defined in~\eqref{eq:definition G}.  We have:
\[		
\det \. \begin{bmatrix}
		G(i,t-1) & G(i,t)\\
		G(j,t-1) & G(j,t)
	\end{bmatrix}
	\ \geq \  0\ts,
\]
for all \ts $1\leq i \leq j$ \ts and \ts $1\le t \le n$.
\end{lemma}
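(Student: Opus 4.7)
The statement of the lemma is equivalent to saying that the column vectors $\bg_t := (G(i,t))_{i\ge 1}$ satisfy the cross--product relation $\bg_{t-1}\precCP\bg_t$ for every $t\ge 2$, in the sense of Definition~\ref{def:CP-relation}. My plan is to derive this from Lemma~\ref{l:rvector is decreasing} applied to the pair $(z_1,z_2)$, after a reindexing; the residual mixed-chain case is then handled by generalizing the proof of Lemma~\ref{l:rvector is decreasing}.

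\emph{Step 1 (Reduction when $z_1,z_2$ lie in a common chain of the partition).} Set $m:=t-i$, the position of $z_1$ in $L$. Then
\[
G_P(i,t)\ = \ \bigl|\{L\in\Ec(P):L(z_1)=m,\ L(z_2)=t\}\bigr|,
\]
which is exactly $\rbb_t(m)$ from Section~\ref{s:log} after identifying $\beta_k\leftarrow z_1$, $\beta_\ell\leftarrow z_2$. Substituting $a=t-1-i$, $b=t-1-j$ (so $i\le j$ iff $a\ge b$), the desired inequality $\bg_{t-1}(i)\bg_t(j)-\bg_{t-1}(j)\bg_t(i)\ge 0$ becomes $\rbb_t(b+1)\rbb_{t-1}(a)-\rbb_t(a+1)\rbb_{t-1}(b)\ge 0$ for $b\le a$, which is precisely the case $m=t-1$ of Lemma~\ref{l:rvector is decreasing}, namely $\rbb_t\precCP S\rbb_{t-1}$. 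Together with the chain-swap trick of Lemma~\ref{l:logconcavity q}, this settles the cases $\{z_1,z_2\}\subseteq\Cr_2$ and $\{z_1,z_2\}\subseteq\Cr_1$.

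\emph{Step 2 (Mixed chain case).} There remain the configurations with $z_1\in\Cr_1$, $z_2\in\Cr_2$ (or vice versa) for which no $2$-chain partition places $z_1,z_2$ in a common chain. I generalize the proof of Lemma~\ref{l:rvector is decreasing} using the same three-part decomposition at $z_1,z_2$. With $X_1,X_2,X_3$ as in Lemma~\ref{l:rvector is decreasing} and $c_1:=\lessr_P(z_1)$, the restriction map $L\mapsto(L|_{X_1},L|_{X_2},L|_{X_3})$ is a bijection and yields
\[
G_P(i,t)\ =\ \qb_{P_1,z_1}(t-i)\,\cdot\,M_{P_2}(t-i-c_1,\ t-c_1)\,\cdot\,e(P_3),
\]
where $\qb_{P_1,z_1}(k):=|\{L_1\in\Ec(P_1):L_1(z_1)=k\}|$ and $M_{P_2}(a,b):=|\{L_2\in\Ec(P_2):L_2(z_1)=a,L_2(z_2)=b\}|$. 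In the subposet $P_2$, the element $z_1$ is a minimum and $z_2$ is a maximum. Packaging the above into a matrix identity and applying the stability of $\precCP$ under left multiplication by matrices with nonnegative $2\times2$ minors (Lemma~\ref{l:CPtransferrable}), together with the log-concavity of $\qb_{P_1,z_1}$ from Lemma~\ref{l:logconcavity q}, reduces the claim to an inner CP relation $\bm_{t-1}\precCP\bm_t$ for consecutive columns of the matrix $(M_{P_2}(a,b))_{a,b}$.

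\emph{Main obstacle.} The crux is establishing this inner CP relation, which is an analog of Lemma~\ref{l:rvector is decreasing} allowing $z_1,z_2$ to sit in different chains of the partition of $P_2$. This requires introducing cross-chain analogs of the vectors $\ab_P,\bb_P$ and of the characteristic matrix $\bN_P$ of Section~\ref{s:characteristic matrix}, and verifying that the intermediate results of Sections~\ref{s:cross product relation} and~\ref{s:log} (notably the transfer Lemma~\ref{lemlogconcavity 2}) carry over to this broader setting. The bookkeeping---in particular the correct identification of which of $\bA_P,\bB_P$ plays which role when $z_1$ and $z_2$ are from opposite chains---is the most delicate technical aspect of the proof, although conceptually it closely parallels the structure of Section~\ref{s:log}.
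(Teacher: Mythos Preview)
Your Step~1 is correct and matches the paper's treatment of the case where $z_1,z_2$ lie in the same chain: with $z_1=\beta_k$, $z_2=\beta_\ell$ one has $G(i,t)=\rbbt_t(t-i)$, and the minor in question becomes exactly the statement $\rbb_t\precCP S\rbb_{t-1}$ from Lemma~\ref{l:rvector is decreasing}.

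Your Step~2, however, is not a proof but a program, and you yourself flag the ``main obstacle'': you would need cross-chain analogues of $\ab_P,\bb_P,\bN_P$ and would need to re-verify all of Sections~\ref{s:cross product relation}--\ref{s:log} in that generality. None of this is carried out. The paper sidesteps this entire development with a much simpler idea that you are missing. Assume (after a chain swap) $z_1=\beta_k\in\Cr_2$ and $z_2=\alpha_h\in\Cr_1$. Since only the values $L(z_2)\in\{t-1,t\}$ matter, one may impose $\beta_{t-h-1}\prec z_2\prec\beta_{t-h+1}$; setting $\ell:=t-h$, the event $L(z_2)=t$ is then \emph{equivalent} to $L(\beta_\ell)<t$, and $L(z_2)=t-1$ to $L(\beta_\ell)\ge t$. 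This rewrites
\[
G(i,t)\ =\ \sum_{v<t}\rbbt_v(t-i),\qquad G(i,t-1)\ =\ \sum_{u\ge t}\rbbt_u(t-1-i),
\]
where $\rbb_u=\rbb_{k,\ell,u}$ now involves two elements of $\Cr_2$, so Lemma~\ref{l:rvector is decreasing} applies as stated. The $2\times2$ minor expands as $\sum_{u\ge t}\sum_{v<t}\det\!\begin{bmatrix}[S\rbb_u](t-i)&\rbbt_v(t-i)\\ [S\rbb_u](t-j)&\rbbt_v(t-j)\end{bmatrix}$, and each summand is nonnegative by $\rbb_v\precCP S\rbb_u$ for $v<u$. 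No new machinery is required.
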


\smallskip

\begin{proof}
Without loss of generality, assume that $z_1 \in \Cr_2$, since we can exchange the labels of
$\Cr_1$ and $\Cr_2$ otherwise.   We split the proof into two cases.
	
First, suppose that \ts $z_2 \in \Cr_2$.  Let $k,\ell$ be integers such that
	\[
    z_1 \, = \, \beta_k \qquad \text{ and } \qquad z_2 \, = \, \beta_\ell\,.
    \]
Let \. $\rbb_{t} :=\rbb_{k,\ell,t}$ \. be the vector defined in~\eqref{eq:definition r}.
It then follows from the definition that \. $G(i,t)= \rbbt_t(t-i)$, for all \ts $i \geq 1$ \ts
and \ts $1\le t\le n$.  It then follows that the given minor of $\bG_P$ is equal to
\begin{align*}
		\det \.   \begin{bmatrix}
			G(i,t-1) & G(i,t)\\
			G(j,t-1) & G(j,t)
                \end{bmatrix}  \ = \
		\det \.   \begin{bmatrix}
			\rbbt_{t-1}(t-1-i) & \rbbt_{t}(t-i)\\
			\rbbt_{t-1}(t-1-j) & \rbbt_t(t-j)
		          \end{bmatrix} \ = \
		\det \. \begin{bmatrix}
			[S\rbb_{t-1}](t-i) &  \rbbt_{t}(t-i)\\
			[S\rbb_{t-1}](t-j) & \rbbt_{t}(t-j)
		\end{bmatrix}.
	\end{align*}
Note that the rows of the matrix in the right hand side are in the decreasing order.
	On the other hand, we also have
	\. $\rbb_{t} \. \precCP \. S\rbb_{t-1}$ \. from~\eqref{eq:logconcave r1}.
	Combining these two observations,
	we conclude that the determinant above is nonnegative, as desired.

Second, suppose that \ts $z_2 \in \Cr_1$.
Let \ts $1\le k \le \bnb$ \ts and \ts $1\le h \le \ana$ \ts be such that
\[ z_1 \, = \, \beta_k \qquad \text{ and } \qquad z_2 \, = \, \alpha_h\,.
\]
 	Since the determinant in the lemma involves counting only linear extensions that satisfy \. $L(z_2) \in \{t-1,t\}$,
 	without loss of generality we can assume that
 	 \. $t-1 \. \leq \. L(z_2) \. \leq \.  t$\..
 	 This is equivalent to assuming that
$$
(\ast) \qquad
\beta_{t-h-1} \ \prec \ z_2 \  \prec \  \beta_{t-h+1}\..
$$
Let \. $\ell  := t-h$\..
Under the assumption $(\ast)$ above, it then follows that
 	\begin{alignat*}{3}
 		&  L(z_2)= t \quad  && \text{ is equivalent to } \quad   && L(\beta_\ell) < t, \quad \text{ and } \\
 		&  L(z_2)= t-1 \quad  && \text{ is equivalent to } \quad   && L(\beta_\ell)\geq t.
 	\end{alignat*}
 	Let \. $\rbb_{u} :=\rbb_{k,\ell,u}$ \. be the vector defined in \eqref{eq:definition r}, for $u \geq 1$.
 	It then follows that, under this scenario, for all $i\geq $1,
 	\begin{alignat*}{2}
 			G(i,t-1) \ &= \ \bigl| \{ \. L\in \Ec(P)~:~L(\beta_k)=t-1-i, \, L(\beta_\ell)<t \. \}\bigr| \ &&= \ \sum_{u =t}^{\infty} \, \rbbt_{u}(t-1-i)\ts,\\
 			 G(i,t) \ &= \ \bigl| \{ \. L\in \Ec(P)~:~L(\beta_k)=t-i, \, L(\beta_\ell)\geq t \. \}\bigr| \ &&= \ \sum_{v =1}^{t-1} \, \rbbt_{v}(t-i)\ts.
 	\end{alignat*} 	
% \com{SH}{I added one extra equality to the formula for $G(i,t-1)$ and $G(i,t)$ above, which makes things much clearer. The original formula is commented out.}

 	The minor of $\bG_P$ as in the lemma is then equal to
 		\begin{align*}
 		& \det \.
 		\begin{bmatrix}
 			G(i,t-1) & G(i,t)\\
 			G(j,t-1) & G(j,t)
 		\end{bmatrix}
 		\ = \
 		\det \.
 		\begin{bmatrix}
 			\displaystyle \sum_{u =t}^{\infty} \rbbt_{u}(t-1-i)  & \displaystyle \sum_{v =1}^{t-1} \rbbt_{v}(t-i)\\
 			\displaystyle \sum_{u =t}^{\infty} \rbbt_{u}(t-1-j) & \displaystyle \sum_{v =1}^{t-1} \rbbt_{v}(t-j)
 		\end{bmatrix}\\
 		&\qquad = \
 		\sum_{u=t}^\infty\, \sum_{v=1}^{t-1} \. \det \. \begin{bmatrix}
 			\rbbt_u(t-1-i) &  \rbbt_{v}(t-i)\\
 			\rbbt_u(t-1-j) & \rbbt_{v}(t-j)
 		\end{bmatrix}
 		\ = \
 		\sum_{u=t}^\infty \, \sum_{v=1}^{t-1} \. \det \.
 		\begin{bmatrix}
 		[S\rbb_u](t-i) &  \rbbt_{v}(t-i)\\
 		[S\rbb_u](t-j) & \rbbt_{v}(t-j)
 		\end{bmatrix}.
 	\end{align*}
Again, note that the rows of the matrices in the right hand side is in the decreasing order.
 	On the other hand, we also have
 	\. $\rbbt_{v} \. \precCP \. S\rbbt_{u}$, for all \ts $v < u$ \ts from~\eqref{eq:logconcave r1}.
 	Combining these two observations,
 	we conclude that the determinant above is nonnegative, as desired.
 	This completes the proof of the second case.
\end{proof}

\smallskip

To generalize the lemma to all \ts $2\times 2$ \ts minors, we need the following technical result.

\smallskip

\begin{lemma}\label{l:support g}
Let \. $\gb_t := \bigl(\gbr_t(1),\gbr_t(2), \ldots \bigr)$ \. be
the vector  given by \. $\gbr_t(i) \. := \. G(i,t)$, for all \ts $i\ge 1$
\ts and \ts $1\le t\le n$.
Then \. $\gb_t$ \. is an admissible vector, for all \ts $1\le t\le n$.
Furthermore, the set
\[
\{ \.  t \in [n] \. ~:~\. \gb_t \text{ is a nonzero admissible vector} \. \}
\]
	is a closed interval of integers.
\end{lemma}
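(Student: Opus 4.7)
The plan is to handle the two assertions of the lemma separately, leveraging the admissibility results of Section~\ref{s:log} together with the formulas already derived in the proof of Lemma~\ref{l:crossproduct G}.

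For the support claim, I would invoke the standing assumption \ts $z_1 \prec_P z_2$ \ts from~\eqref{eq:z1z2z3}, which forces \ts $L(z_1) < L(z_2)$ \ts for every \ts $L \in \Ec(P)$. Hence \ts $\gb_t$ \ts is a nonzero vector if and only if there exists some \ts $L \in \Ec(P)$ \ts with \ts $L(z_2) = t$, equivalently \ts $\qbr_{z_2}(t) > 0$. By Lemma~\ref{l:logconcavity q}, \ts $\qb_{z_2}$ \ts is an admissible vector, and the support of any admissible vector is by definition a closed interval of positive integers; intersecting with \ts $[n]$ \ts yields the second assertion (once admissibility of each nonzero $\gb_t$ is established).

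For admissibility of each individual \ts $\gb_t$, fix \ts $t \in [n]$ \ts and, by possibly exchanging the labels of \ts $\Cr_1$ \ts and \ts $\Cr_2$, assume \ts $z_1 = \beta_k \in \Cr_2$. Then I would split into the same two cases as in the proof of Lemma~\ref{l:crossproduct G}. If \ts $z_2 = \beta_\ell \in \Cr_2$, the identity \ts $\gbr_t(i) = \rbbt_t(t-i)$ \ts already recorded in that proof makes admissibility of \ts $\gb_t$ \ts (as a function of~$i$) equivalent, via reversal and shift, to admissibility of \ts $\rbb_t$, which is precisely the content of Lemma~\ref{l:rvector is decreasing}. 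If instead \ts $z_2 = \alpha_h \in \Cr_1$, then under the harmless reduction \ts $\beta_{t-h-1} \prec z_2 \prec \beta_{t-h+1}$ \ts used in Case~2 of Lemma~\ref{l:crossproduct G}, one obtains
\[
\gbr_t(i) \ = \ \sum_{v=1}^{t-1} \. \rbbt_v(t-i)\ts,
\]
with \ts $\rbb_v = \rbb_{k,\, t-h,\, v}$. Here admissibility does not pass directly from each \ts $\rbb_v$ \ts individually, since the summation index \ts $v$ \ts varies. The strategy is to combine admissibility of each \ts $\rbb_v$ \ts with the monotonicity \ts $\rbb_v \precCP S\. \rbb_{v'}$ \ts for \ts $v \le v'$ \ts (both from Lemma~\ref{l:rvector is decreasing}): given witnesses \ts $v_1, v_2 \in \{1, \ldots, t-1\}$ \ts with \ts $\rbbt_{v_1}(t-i), \rbbt_{v_2}(t-k) > 0$, a short case analysis on the order of \ts $v_1$ \ts and \ts $v_2$ \ts together with iterated application of the cross-product inequality~\eqref{eqcp determinant} produces some \ts $v_3 \in \{1, \ldots, t-1\}$ \ts with \ts $\rbbt_{v_3}(t-j) > 0$, giving \ts $\gbr_t(j) > 0$.

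The main obstacle is this second case: while Case~1 is immediate from the prior lemmas, Case~2 requires deducing one-dimensional admissibility of a \ts $v$-sum of \ts $\rbb$-vectors from the cross-product relations linking these vectors for different \ts $v$. The guiding combinatorial picture is that the 2D set of achievable pairs \ts $\bigl(L(\beta_k),\ts L(\beta_{t-h})\bigr)$ \ts is staircase-convex in \ts $\Zb^2$, but extracting this cleanly from the algebraic formalism of Section~\ref{s:cross product relation} is the delicate step.
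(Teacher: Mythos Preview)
Your argument for the second assertion (the set of $t$ with $\gb_t\neq 0$ is an interval) is exactly the paper's: both reduce to admissibility of $\qb_{z_2}$ via Lemma~\ref{l:logconcavity q} and the identity $\qbr_{z_2}(t)=\sum_{i\ge 1}\gbr_t(i)$.

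For the first assertion you work much harder than necessary. The paper does \emph{not} inherit the chain choice of Lemma~\ref{l:crossproduct G}; it instead swaps chains so that $z_2\in\Cr_2$, say $z_2=\beta_\ell$. Since $\gb_t$ only counts linear extensions with $L(z_2)=t$, one may add the relations $\alpha_{t-\ell}\prec z_2\prec\alpha_{t-\ell+1}$, which \emph{force} $L(z_2)=t$ in every linear extension of the modified width-two poset. Hence
\[
\gbr_t(i)\ =\ \bigl|\{L:L(z_1)=t-i,\ L(z_2)=t\}\bigr|\ =\ \qbr_{z_1}(t-i)
\]
in that modified poset, and admissibility of $\gb_t$ is immediate from Lemma~\ref{l:logconcavity q}. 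No sum over an auxiliary index $v$ ever appears, and no case split on the chain of $z_2$ is needed.

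Your Case~2 difficulty is self-inflicted: you imported the weaker reduction $\beta_{t-h-1}\prec z_2\prec\beta_{t-h+1}$ from the proof of Lemma~\ref{l:crossproduct G}, which only pins $L(z_2)\in\{t-1,t\}$ (there both values are genuinely needed). For the present lemma only $L(z_2)=t$ matters, so even staying in your setup you could impose the sharper constraint $\beta_{t-h}\prec z_2\prec\beta_{t-h+1}$ and obtain $\gbr_t(i)=\qbr_{z_1}(t-i)$ directly, eliminating the sum. Your sketched ``short case analysis'' on $v_1,v_2$ can in fact be completed---it amounts to showing that as $v$ runs over the (interval of) indices with $\rbb_v\neq 0$, the supports $\supp(\rbb_v)$ form a chain of pairwise overlapping intervals, which follows from the $\up/\down$ bounds in~\eqref{eqcp updown} applied to $\rbb_{v+1}\precCP S\rbb_v$ and $\rbb_v\precCP S\rbb_{v+1}$---but this is a detour around a one-line reduction.
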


\smallskip

\begin{proof}
	Again, without loss of generality assume that $z_2 \in \Cr_2$.
	Let $\ell$ be the integer such that \. $z_2 = \beta_\ell$\..
	Since \ts $\gb_t$ \ts counts only  linear extensions satisfying \ts $L(z_2)=t$,
	without loss of generality we can assume that
 	\[
        \beta_{t-h} \ \prec \ z_2 \  \prec \  \beta_{t-h+1}\..
    \]
 	It then follows that \. $\gbr_t(i) = \qbr_{z_1}(t-i)$ \. for all \ts $i \geq 1$,
 where $\qb$ is defined in~\eqref{eq:definition q}.
Since \ts $\qb_{z_1}$ \ts is an admissible vector from  Lemma~\ref{l:logconcavity q},
 	it then follows that \ts $\gb_t$ \ts is also an admissible vector.
This proves the first part.
 	
For the second part, note that
 \[
 \qbr_{z_2}(t)   \ = \ \sum_{i \geq 1} \, \gbr_t(i) \quad \text{for all \ \. $1\le t\le n$}\ts.
 \]
 	Hence $\gb_t$ is a nonzero vector if and only if
 	$\qb_{z_2}(t)$ is nonzero.
 	On the other, we have that $\qb_{z_2}$ is
 	an admissible vector by  Lemma~\ref{l:logconcavity q}.
 	The second claim now follows by combining these two observations.
 \end{proof}

\smallskip

\begin{lemma}\label{l:minor G}
Every \ts $2 \times 2$ \ts minor of \ts $\bG_P$ \ts is nonnegative.
\end{lemma}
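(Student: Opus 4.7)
The plan is to lift \Cref{l:crossproduct G}, which only handles the consecutive column case, to arbitrary column indices by chaining cross--product relations and invoking transitivity of $\precCP$ on nonzero admissible vectors (\Cref{l:CP is transitive}).

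First, I would recast \Cref{l:crossproduct G} in the language of \Cref{def:CP-relation}. With the vectors $\gb_t$ from \Cref{l:support g}, the determinant inequality of \Cref{l:crossproduct G} is exactly the assertion $\gb_{t-1} \precCP \gb_t$ for every $t \ge 2$. Similarly, the general minor nonnegativity
\[
\det \begin{bmatrix} G(i,t_1) & G(i,t_2) \\ G(j,t_1) & G(j,t_2) \end{bmatrix} \ \ge \ 0 \qquad \text{for all } 1 \le i \le j, \ 1 \le t_1 \le t_2 \le n,
\]
is just the statement $\gb_{t_1} \precCP \gb_{t_2}$.

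Next, I would dispose of the degenerate cases using \Cref{l:support g}. That lemma tells us each $\gb_t$ is an admissible vector and the set $I := \{t \in [n] : \gb_t \neq \zero\}$ is a closed interval of integers. If either $\gb_{t_1}$ or $\gb_{t_2}$ is the zero vector, then one column of the $2 \times 2$ submatrix of $\bG_P$ vanishes and the minor is trivially zero. Otherwise $t_1, t_2 \in I$, and by the interval property every $\gb_t$ with $t_1 \le t \le t_2$ is nonzero.

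Finally, I would chain and apply transitivity. Applying \Cref{l:crossproduct G} consecutively gives
\[
\gb_{t_1} \ \precCP \ \gb_{t_1+1} \ \precCP \ \cdots \ \precCP \ \gb_{t_2}\ts,
\]
a chain of cross--product relations between nonzero admissible vectors. Iterated application of \eqref{eqcp is transitive} in \Cref{l:CP is transitive} then yields $\gb_{t_1} \precCP \gb_{t_2}$, which is exactly the desired minor inequality.

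There is no real obstacle here: essentially all the work has already been absorbed into \Cref{l:crossproduct G} and \Cref{l:support g}. The only subtle point is that transitivity of $\precCP$ requires nonzero intermediate vectors, but this is guaranteed by the interval structure of the support provided by \Cref{l:support g}. The proof is thus a short, clean assembly of ingredients already developed in this section.
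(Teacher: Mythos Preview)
Your proposal is correct and follows essentially the same approach as the paper's proof: both reduce the general minor inequality to the relation \ts $\gb_{t_1}\precCP\gb_{t_2}$, dispose of the case where either vector vanishes, use the interval structure from \Cref{l:support g} to ensure all intermediate vectors are nonzero, and then chain \Cref{l:crossproduct G} together with the transitivity in \Cref{l:CP is transitive}.
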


\smallskip

\begin{proof}

	Note that it suffices to show that
	\. $\gb_t \. \precCP \. \gb_m$ \. for all \ts $1 \leq t \leq m \leq \n$.
	The claim is vacuously true if either $\gb_t$ or $\gb_m$ is equal to zero,
	so we assume that both $\gb_t$ and $\gb_m$ are nonzero vectors.
	It then follows from Lemma~\ref{l:support g}
	that \ts $\gb_{t}$, $\gb_{t+1}$, \ldots, $\gb_{m}$ \ts are nonzero admissible vectors.
	On the other hand, we have\. $\gb_{i} \precCP \gb_{i+1}$ \. for all \ts $t \le i \le m-1$ by Lemma~\ref{l:crossproduct G}.
	It then follows from Lemma~\ref{l:CP is transitive} that
	\. $\gb_t \. \precCP \. \gb_m$. This implies the result.
\end{proof}

\smallskip

\subsection{Matrix \ts $\bH_P$ \ts minors}
This case follows via reduction to
the previous case.

\medskip

\begin{lemma}\label{l:minor H}
Every \ts $2 \times 2$ \ts minor of \ts $\bH_P$ \ts is nonpositive.
\end{lemma}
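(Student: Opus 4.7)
The plan is to reduce this statement to Lemma~\ref{l:minor G} by passing to the dual poset. Let $P^{\ast} = (X, \prec^{\ast})$ be the order dual of $P$, defined by $x \prec^{\ast} y \Leftrightarrow y \prec_P x$. Since the width is preserved under duality, $P^{\ast}$ is again a poset of width two. There is a canonical bijection $\Ec(P) \to \Ec(P^{\ast})$ sending a linear extension $L$ to its reverse $L^{\ast}(x) := n+1-L(x)$.

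Next, I would relabel the three distinguished elements. Under $\prec^{\ast}$ we have $z_3 \prec^{\ast} z_2 \prec^{\ast} z_1$, so choosing the new triple $(z_1^{\ast}, z_2^{\ast}, z_3^{\ast}) := (z_3, z_2, z_1)$ places $P^{\ast}$ into the setup of~\eqref{eq:z1z2z3}. Under $L \leftrightarrow L^{\ast}$, the conditions $L(z_3) = t+i$ and $L(z_2) = t$ translate to $L^{\ast}(z_1^{\ast}) = (n+1-t)-i$ and $L^{\ast}(z_2^{\ast}) = n+1-t$. Comparing with the definition of $\bG$ in~\eqref{eq:definition G}, this gives the key identity
\[
H_P(i,t) \ = \ G_{P^{\ast}}(i,\. n+1-t) \qquad \text{for all} \ \ i \ge 1, \ 1 \le t \le n.
\]

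From here, the conclusion is immediate. Fix $1 \le i \le j$ and $1 \le t \le m \le n$, and set $t' := n+1-m \le n+1-t =: m'$. Then
\[
\det \begin{bmatrix} H_P(i,t) & H_P(i,m) \\ H_P(j,t) & H_P(j,m) \end{bmatrix} \ = \ \det \begin{bmatrix} G_{P^{\ast}}(i,m') & G_{P^{\ast}}(i,t') \\ G_{P^{\ast}}(j,m') & G_{P^{\ast}}(j,t') \end{bmatrix} \ = \ -\det \begin{bmatrix} G_{P^{\ast}}(i,t') & G_{P^{\ast}}(i,m') \\ G_{P^{\ast}}(j,t') & G_{P^{\ast}}(j,m') \end{bmatrix},
\]
where the last equality is a column swap. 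By Lemma~\ref{l:minor G} applied to $P^{\ast}$, the determinant on the right is nonnegative, hence the original determinant is nonpositive.

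The only mild obstacle is bookkeeping: one must verify that relabeling $(z_1, z_2, z_3) \mapsto (z_3, z_2, z_1)$ in $P^{\ast}$ is compatible with the convention~\eqref{eq:z1z2z3} and that the identity $H_P(i,t) = G_{P^{\ast}}(i, n+1-t)$ holds on the nose without any shift. Once these are in hand, the result follows. Combining Lemma~\ref{l:minor G} and the present lemma via~\eqref{eq:formula FGH Cauchy-Binet} then yields the desired inequality~\eqref{eq:CPconjecture-gen}, completing the algebraic proof of Theorem~\ref{t:main-gen}.
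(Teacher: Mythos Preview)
Your proof is correct and follows essentially the same approach as the paper: pass to the order dual $P^\ast$, swap the roles of $z_1$ and $z_3$, obtain the identity $H_P(i,t) = G_{P^\ast}(i,\, n+1-t)$, and then reduce to Lemma~\ref{l:minor G} via a column swap. You in fact spell out the bijection $L \mapsto L^\ast$ and the translation of conditions more explicitly than the paper does.
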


\smallskip

\begin{proof}
Let \ts $P^\ast:=(X,\precc^\ast)$ \ts be the \defn{order dual} of~$P$
obtained by reversing~$\prec_P$.
Let \ts $z_1^\ast \gets z_3$, \ts $z_2^\ast \gets z_2$ \ts and \ts $z_3^\ast \gets z_1$.
Similarly, let \ts $\bG^\ast = \bG_{P^\ast}$ \ts be the matrix in~\eqref{eq:definition G}
that corresponds to poset $P^\ast$ and elements $z_1^\ast, z_2^\ast, z_3^\ast$.
Therefore, \ts
$$\bH_P(i,t) = \  \bG(i,n-t+1) \quad \text{for all \ $i \geq 1$ \ and \ $1\le t \le n$. }
$$
Hence we have:
	\begin{align*}
		\det \.
		\begin{bmatrix}
			\aH(i,t)  &  \aH(i,m)\\
			\aH(j,t) & \aH(j,m)
		\end{bmatrix}
	\ \ =& \ \
			\det \.
	\begin{bmatrix}
		G^\ast(i,\n-t+1)  &  G^\ast(i,\n-m+1)\\
		G^\ast(j,\n-t+1) & G^\ast(j,\n-m+1)
	\end{bmatrix} \\
	 \ \ = & \ - \ \det \.
	\begin{bmatrix}
		G^\ast(i,\n-m+1)  &  G^\ast(i,\n-t+1)\\
		G^\ast(j,\n-m+1) & G^\ast(j,\n-t+1)
	\end{bmatrix},
	\end{align*}
for every $1\leq i \leq j$  and $1 \leq t \leq m\le n$.
In the second equality, we swap the first row and the second row
of the matrix, so that the rows and columns are indexed in the
increasing order. It then follows from Lemma~\ref{l:minor G}
that the determinant above is nonpositive, as desired.
\end{proof}

\smallskip

\subsection{Proof of Theorem~\ref{t:main-gen}}
	Let \ts $\bG_{Q}$ \ts and \ts $\bH_{R}$ \ts  be as in~\eqref{eq:formula FGH}.
	 Note that every \ts $2 \times 2$ \ts minor of \ts $\bG_{Q}$ is nonnegative
    by Lemma~\ref{l:minor G}, every \ts $2 \times 2$ \ts minor of \ts $S^c$ \ts
    is $0$ or~$1$, and every \ts $2 \times 2$ \ts minor of \ts $\bH_{R}$ is nonpositive
    by Lemma~\ref{l:minor H}.  By the Cauchy--Binet formula in~$\S$\ref{ss:back-CB},
    this implies that every \ts $2 \times 2$ \ts minor of \ts $\bF_P$ \ts
    is nonpositive, as desired.

    To make this argument even more explicit, the RHS of~\eqref{eq:formula FGH Cauchy-Binet}
    is a sum of products of nonnegative numbers with nonpositive numbers.
    This sum is thus a nonpositive number, which proves the result. \qed

\bigskip

\section{Lattice paths preliminaries}\label{s:lattice}

In this section we interpret the linear extensions of $P$ as monotonic lattice paths
and setup towards the proof of Theorem~\ref{t:main-big-q} given in the next section.

\smallskip

\subsection{Lattice path interpretation}\label{ss:lattice path interpretation}

Recall the notation for posets $P$ of width two given in~$\S$\ref{ss:back-width-2},
with two chains $\Cr_1$ and $\Cr_2$.  Denote by \ts $\zero=(0,0)$ \ts the origin and by
\ts $\eone = (1,0)$, \ts
$\etwo=(0,1)$ \ts two standard unit vectors in $\Zb^2$.

Informally, the lattice path is obtained from a linear extension $L$ by interpreting it as a sequence of North and East steps, where the step at position $k$ is North if and only if $L^{-1}(k) \in \Cr_2$.
Formally, let $L\in \Ec(P)$.  We associate to $L$ a \defn{North--East {\rm (NE)} lattice path} \.
$\phi(L):=(Z_t)_{1 \leq t \leq \n}$ \. in \ts $\Zb^2$ \ts
from \ts $\zero=(0,0)$ to $(\ana,\bnb)$.  The path \ts
$(Z_t)=\bigl(Z_t(1),Z_t(2)\bigr)$ \ts is
defined recursively as follows:
\begin{equation*}
	Z_0 \, = \, \zero, \qquad  Z_t \ := \
	\begin{cases}
		\ts Z_{t-1} \ts + \ts \eone & \text{ if } \ L^{-1}(t) \in \Cr_1\.,\\
		\ts Z_{t-1}\ts + \ts \etwo & \text{ if } \ L^{-1}(t) \in \Cr_2\..
	\end{cases}
\end{equation*}

We now characterize all the lattice paths that arise from this correspondence.

Denote by $\Cen(P)$ the set
\begin{align*}
 \Cenup(P) \ &:= \ \bigg\{ \left(h-\frac{1}{2}, k - \frac{1}{2}\right) \in \Rb^2 \ :  \  \alpha_h \. \precc_P \. \beta_k\,, \ 1\le h \le \ana, \ 1\le k \le \bnb \. \bigg \}\., \\
  \Cendown(P) \ &:= \ \bigg\{ \left(h-\frac{1}{2}, k - \frac{1}{2}\right) \in \Rb^2 \ :  \  \alpha_h \. \succc_P \. \beta_k\,, \ 1\le h \le \ana, \ 1\le k \le \bnb \. \bigg \}\..
\end{align*}
Let $\Forbup(P)$ and $\Forbdown(P)$ be  the set of unit squares in $[0, \ana] \times [0,\bnb]$
whose centers are in $\Cenup(P)$ and $\Cendown(P)$, respectively.
Note that the region $\Forbup(P)$ lies above the region $\Forbdown(P)$, and their interiors do not intersect.
Let $\Reg(P)$ be the (closed) region of $[0,\ana]\times [0,\bnb]$ that
is bounded from above by the region $\Forbup(P)$, and from below by the region $\Forbdown(P)$,
see Figure~\ref{f:regionP}.

It follows directly from the definition that $\Reg(P)$ is a connected row and
column convex region, with boundary defined by two lattice paths. Indeed,
the upper boundary is the lattice path corresponding to the minimal linear
extension $\Lc$ from $\S$\ref{ss:char-main-def}, and the lower boundary
is the lattice path corresponding to the minimal linear extension with
the labels of $\Cr_1$ and $\Cr_2$ exchanged.

\begin{figure}[hbt]
	\centering
	\begin{tabular}{c@{\hskip 1 in}c }
		\includegraphics[width=0.18\linewidth]{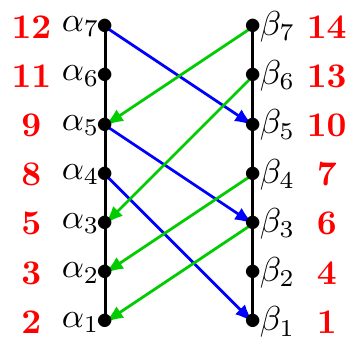}  &
		\includegraphics[width=0.2\linewidth]{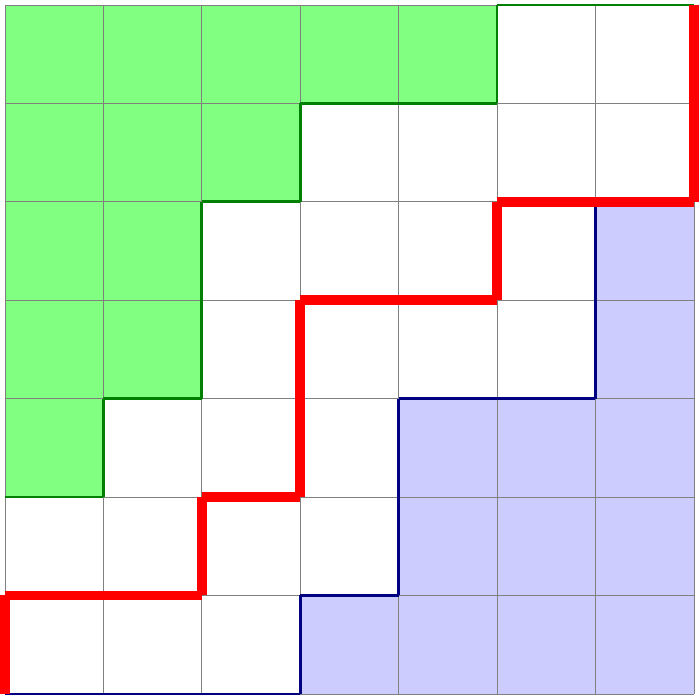}  \\
		(a) & (b)
	\end{tabular}
	\caption{(a) The Hasse diagram of a poset $P$, and a linear extension $L$ of $P$ (written in red).
	(b) The corresponding region $\Reg(P)$, with $\Forbup(P)$ in green and $\Forbdown(P)$ in blue, and the lattice path $\phi(L)$ in red.	
	}
	\label{f:regionP}
\end{figure}

\smallskip

\begin{lemma}\label{l:interpretation lattice path}
	%Let $P=(X,\prec)$ \ts be a finite poset of width two.
	The map~$\phi$ described above is a bijection between
    \ts $\Ec(P)$ \ts and NE lattice paths in \ts $\Reg(P)$ \ts
    from \ts $\zero$ \ts to \ts $(\ana,\bnb)$.
\end{lemma}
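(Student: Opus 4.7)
The plan is to establish the lemma by exhibiting an explicit two-sided inverse $\psi$ of $\phi$. That $\phi(L)$ is a NE lattice path from $\zero$ to $(\ana, \bnb)$ is immediate: since $L : X \to [n]$ is a bijection with $|\Cr_1| = \ana$ and $|\Cr_2| = \bnb$, the sequence $\phi(L)$ has exactly $\ana$ east steps and $\bnb$ north steps, and therefore terminates at $(\ana, \bnb)$.

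The main step is to verify that $\phi(L) \subseteq \Reg(P)$. The key observation is that any $L \in \Ec(P)$ places $\alpha_1, \ldots, \alpha_{\ana}$ in this order and $\beta_1, \ldots, \beta_{\bnb}$ in this order, since $\Cr_1$ and $\Cr_2$ are chains in $P$. Suppose $(h - 1/2, k - 1/2) \in \Cenup(P)$, so $\alpha_h \prec_P \beta_k$ and hence $L(\alpha_h) < L(\beta_k)$. Setting $t := L(\alpha_h)$, we have $Z_t = (h, j)$, where $j$ counts the $\beta$'s placed in the first $t$ steps; since $\beta_k$ has not yet been placed and the $\beta$'s appear in order, $j < k$. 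Thus the east step into $x = h$ occurs at height strictly less than $k$, which is precisely the condition that the path passes below the forbidden unit square at $(h-1/2, k-1/2)$. The argument for $\Cendown(P)$ is symmetric, swapping the roles of the $\alpha$'s and $\beta$'s.

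For the inverse, given a NE path $\gamma = (Z_t)$ in $\Reg(P)$, define $\psi(\gamma) = L$ by declaring $L(\alpha_i) = t$ whenever $Z_t - Z_{t-1} = \eone$ and $Z_t(1) = i$, and $L(\beta_j) = t$ whenever $Z_t - Z_{t-1} = \etwo$ and $Z_t(2) = j$. This is a bijection $X \to [n]$, and the chain orderings within $\Cr_1$ and $\Cr_2$ are preserved automatically since east steps occur in order of increasing $x$-coordinate and north steps in order of increasing $y$-coordinate. For a cross-chain relation $\alpha_h \prec_P \beta_k$, the fact that $\gamma$ avoids the forbidden square in $\Forbup(P)$ forces its east step into $x = h$ to occur at height $\le k - 1$, whence the north step reaching $y = k$ happens strictly later, giving $L(\alpha_h) < L(\beta_k)$; symmetrically for $\Cendown(P)$. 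Thus $L \in \Ec(P)$.

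Finally, the identities $\psi \circ \phi = \id$ and $\phi \circ \psi = \id$ follow directly from the definitions, since both maps simply translate between the sequence of step directions and the sequence of element placements. The only substantive content in the argument is the equivalence between a NE path avoiding a forbidden unit square and the corresponding cross-chain comparability being respected, which is established by the tracking of coordinates in the previous two paragraphs; I expect this to be the only place in the write-up that requires any real care.
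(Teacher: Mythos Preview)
Your proof is correct and follows essentially the same approach as the paper: both construct the explicit inverse $\psi$ (which the paper calls $\phi^{-1}$) by reading off the bijection $L$ from the step directions, and both reduce the containment $\phi(L)\subseteq\Reg(P)$ and the well-definedness of $\psi$ to the equivalence between avoiding a forbidden unit square and respecting the corresponding cross-chain relation. The only cosmetic difference is perspective---the paper fixes an edge of the path and shows it lies in $\Reg(P)$, while you fix a forbidden square and show the path avoids it---but these are dual formulations of the same computation.
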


\smallskip

\begin{proof}
We first show that, for each linear extension~$L$, the corresponding
lattice path $(Z_t)_{1 \leq t \leq \n}$ is contained in \ts $\Reg(P)$.
Let \ts $t \in [n]$ \ts and let \ts $(h,k):= Z_{t}$.
Without loss of generality, we assume that $L^{-1}(t) \in \Cr_1$.
This implies that $L(\alpha_h)=t$, which in turn implies that
	\[
        L(\beta_k) \ < \ L(\alpha_h) \ < \ L(\beta_{k+1})\ts.
    \]
Now note that \. $L(\beta_k) < L(\alpha_h)$ \. implies that \ts
$\beta_k\nsucc_P\alpha_h$, and hence \.
$\big(h-\frac{1}{2}, k-\frac{1}{2}\big)\notin\Cenup(P)$.  By the same reasoning,
	we have \.  $\big(h-\frac{1}{2}, k+\frac{1}{2}\big) \notin\Cendown(P)$.
	This implies that the edge \. $\big[(h-1,k)\., \. (h,k) \big]\in \Reg(P)$.
	Since the choice of $t$ is arbitrary, this implies that
	the lattice path
	$(Z_t)_{1 \leq t \leq \n}$ is contained in $\Reg(P)$.

	We now construct the inverse map $\phi^{-1}$.
	Given a  lattice path  $(Z_t)_{1 \leq t \leq \n}$,
	 we construct the corresponding linear extension $L$ as follows.
	 For each $t \in [n]$, let
	 \begin{align*}
	 	L(\alpha_h) \ := \  t  \quad \ \text{ if } \ \quad &  Z_t-Z_{t-1} = \eone \ \ \text{ and } \ \ h=Z_t(1), \\
	 	L(\beta_k) \ := \  t  \quad \ \text{ if } \ \quad &  Z_t-Z_{t-1} = \etwo \ \ \text{ and } \ \ k=Z_t(2).
	\end{align*}
	It follows from the similar reasoning as above that $L$ respects the poset relations $\precc_P$.
	This completes the proof.
	 \end{proof}
	
	\medskip
	
		Let \. $\A=(a_1,a_2) , \B=(b_1,b_2)$ \. be two integral vertices in $\Reg(P)$,
		and let
	 $\zeta$ be a NE lattice path in $\Reg(P)$ from $\A$ to $\B$.
	Define the \defn{weight} of $\zeta$ by
	\[ \wgt(\zeta) \ := \ \text{number of unit boxes in \ts $[0,\ana] \times [0,\bnb]$ \ts that lie below $\zeta$}.  \]
	
	Recall from \eqref{eq:definition weight L} the definition of the weight function for a linear extension.
	It is easy to see that\. $\wgt(\phi(L)) \ts = \ts \wgt(L) - \binom{\ana+1}{2}$ \. for every $L\in \Ec(P)$.
 	
	\medskip
	
	\subsection{Injective maps between pairs of lattice paths}
%We now present several lemmas that will be used in the proof of Theorem~\ref{t:qCP}.
%	
	Let $\A, \B \in \Reg(P)$.
	Denote by \. $\Kc(\A,\B)$  \. the set of NE lattice paths
    $\zeta\in \Reg(P)$ that starts at $\A$ and ends at $\B$. Similarly,
    denote by \. $\aK_q(\A,\B)$  \. the polynomial
	\begin{align*}
		\aK_q(\A,\B) \  := \  \sum_{\zeta \in \Kc(\A,\B)} \. q^{\wgt(\zeta)}\..
	\end{align*}
	
	\smallskip
	
\begin{lemma}\label{l:lattice path bijection 1}
Let \ts $\A,\B \in \Reg(P)$ \ts be on the same vertical line and with
$\A$ above~$\B$, i.e.,\ \ts $a_1=b_1$ \ts and \ts $a_2 \geq b_2$.
Let \ts $\C,\D \in \Reg(P)$ \ts be on a vertical line to the right
of the line $(\A\B)$, and with $\C$ above $\D$.
	\begin{enumerate}
	\item[\textnormal{(a)}]
	If \. $|\A \B| >|\C\D|$, i.e.,\ \ts $a_2-b_2 > c_2-d_2$, then
	\begin{equation*}
				\aKr_q(\A-{\normalfont \etwo},\C) \,\cdot \, \aKr_q(\B+{\normalfont \etwo},\D) \  \geqslant \  	\aKr_q(\A,\C) \,\cdot \, \aKr_q(\B,\D).
			\end{equation*}
			\item[\textnormal{(b)}]
			If \. $|\C \D|>|\A\B|$, then
			\begin{equation*}
				\aKr_q(\A,\C-{\normalfont\etwo}) \,\cdot \, \aKr_q(\B,\D+{\normalfont\etwo}) \ \geqslant \  	\aKr_q(\A,\C)\,\cdot \, \aKr_q(\B,\D).
			\end{equation*}
		\end{enumerate}
	\end{lemma}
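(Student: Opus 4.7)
The plan is to prove both parts of the lemma by constructing a weight-preserving injection from the pairs of paths counted on the right-hand side into the pairs counted on the left-hand side, in the spirit of the Lindstr\"om--Gessel--Viennot lemma but augmented with a vertical translation. I will describe the construction for part~(a); part~(b) follows from the mirror construction obtained by translating $\gamma_1$ downward by $c_2-d_2-1$ and swapping with $\gamma_2$ at their first meeting, playing the symmetric roles of the left and right endpoints.

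Given a pair $(\gamma_1,\gamma_2)\in\Kc(\A,\C)\times\Kc(\B,\D)$ inside $\Reg(P)$, set $s:=a_2-b_2-1\geq 0$ and define the shifted path $\tilde\gamma_2 := \gamma_2+(0,s)$, a NE lattice path from $\A-\etwo$ to $\D+(0,s)$. At the leftmost column $x=a_1$, the path $\tilde\gamma_2$ enters exactly one unit below $\gamma_1$, while at the rightmost column the hypothesis $a_2-b_2>c_2-d_2$ forces $\tilde\gamma_2$ to exit at height at least $c_2$, i.e., at or above $\gamma_1$. Tracking column by column the signed difference of entering heights of the two paths, I locate the first column $x^*$ at which $\gamma_1$ and $\tilde\gamma_2$ share a lattice point, and take $(x^*,y^*)$ to be the lowest such point. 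Performing the classical suffix-swap at $(x^*,y^*)$ produces $\eta_1$ (the prefix of $\tilde\gamma_2$ from $\A-\etwo$ to $(x^*,y^*)$ concatenated with the suffix of $\gamma_1$ to $\C$) and $\tilde\eta_2$ (from $\A$ to $\D+(0,s)$); then $\eta_2 := \tilde\eta_2-(0,s)$ is a NE lattice path from $\B+\etwo$ to $\D$. The map $(\gamma_1,\gamma_2)\mapsto(\eta_1,\eta_2)$ is the desired injection.

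Three properties must then be checked. For region validity, the minimality of $x^*$ forces a strict vertical gap of at least $1$ between $\gamma_1$ and $\tilde\gamma_2$ on all columns $x<x^*$; combined with $\tilde\gamma_2\geq\gamma_2$ pointwise, this pins the prefix of $\tilde\gamma_2$ strictly between $\gamma_2$ and $\gamma_1$, and since $\Reg(P)$ is row-column convex (by its description in Lemma~\ref{l:interpretation lattice path} as the region between two monotone staircase boundaries), the prefix of $\tilde\gamma_2$ lies in $\Reg(P)$; the symmetric sandwich places the downward-shifted prefix of $\gamma_1$ inside $\Reg(P)$, so both $\eta_1$ and $\eta_2$ belong to $\Reg(P)$. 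For weight preservation, a column-by-column computation using the identification of $\wgt$ with the area below the path gives
\[
  \wgt(\eta_1)+\wgt(\eta_2) \ = \ \sum_{x=a_1+1}^{x^*}\bigl((y_{2,x}+s)+(y_{1,x}-s)\bigr)+\sum_{x=x^*+1}^{c_1}(y_{1,x}+y_{2,x}) \ = \ \wgt(\gamma_1)+\wgt(\gamma_2),
\]
where $y_{k,x}$ is the entering height of $\gamma_k$ at column~$x$; the $+s$ contributed by the upward shift on the $\gamma_2$-prefix is exactly cancelled by the $-s$ contributed by the downward shift on the $\gamma_1$-prefix. For injectivity, the inverse is realized by translating $\eta_2$ upward by $s$ and locating the first column in which it meets $\eta_1$; the minimality of $x^*$ in the forward direction implies that this column is again $x^*$ with the same lowest shared point $y^*$, and swapping recovers $(\gamma_1,\gamma_2)$ uniquely.

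The main obstacle will be the region-validity step. A bare vertical translation of a subpath can easily leave a generic planar region, and one might worry that $\tilde\gamma_2$ or $\gamma_1-(0,s)$ escapes $\Reg(P)$ on some column of $[a_1,x^*]$. What saves the argument is the sandwich principle: on precisely those columns, both shifted subpaths are forced to lie strictly between the corresponding portions of $\gamma_2$ (below) and $\gamma_1$ (above), both of which lie in $\Reg(P)$; together with the row-column convexity of $\Reg(P)$, being sandwiched is enough to remain admissible. This convexity, which is special to posets of width two, is the structural input that makes the $q$-refinement of the cross-product inequality amenable to a bijective proof.
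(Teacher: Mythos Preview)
Your proposal is correct and follows essentially the same approach as the paper: translate the lower path upward by $s=a_2-b_2-1$, locate its first intersection with the upper path, swap suffixes, and translate the swapped prefix back down. Your treatment of the region-validity step via the sandwich argument and column convexity of $\Reg(P)$ is in fact more explicit than the paper's, which sketches the same idea more briefly (and with what appears to be a typo, writing ``below~$\zeta$'' where ``below~$\gamma$'' is meant).
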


\medskip

\nin
Informally, the lemma says that there are more pairs of paths closer to
the inside than towards the outside of the region.  We give a direct
combinatorial proof of the lemma by an explicit injection. The injection works by translating the path $\B \to \D$ upwards so that it starts at $\A - \etwo$ and ends at $\D'$. Its translation intersects the path $\A \to \C$ and by choosing the first intersection point we can swap the paths after the intersection, creating paths $\A \to \D'$ and $\A - \etwo \to \C$. Translating the first path back, we obtain paths $\B+\etwo \to \D$ and $\A-\etwo \to \C$. We show that these paths belong to $\Reg(P)$, and the map is an injection.

	\smallskip
	
\begin{proof}
We present only the proof of part (a), as the proof of part (b) is analogous.
It suffices to show that there exists a weight-preserving injection between two set of pairs of paths
\begin{equation*}
		\vk\, : \ \Kc(\A,\C) \. \times \. \Kc(\B,\D) \quad \to \quad  	\Kc(\A-\etwo,\C) \. \times \.  \Kc(\B+\etwo,\D).
\end{equation*}
%	We will without loss of generality assume that \. $\A_1 =\B_1 < \C_1 = \D_1$\., as otherwise the four sets above are all empty sets.
Let \ts $(\gamma,\zeta) \in \Kc(\A,\C)  \times  \Kc(\B,\D)$.
We construct a pair \ts $(\widehat{\gamma}, \widehat{\zeta}) = \vk(\gamma,\zeta)$ \ts as follows.\footnote{We suggest
the reader employ Figure~\ref{f:injection 1} as a running example.}

	\begin{figure}[hbt]
		\centering
		\includegraphics[width=0.8\linewidth]{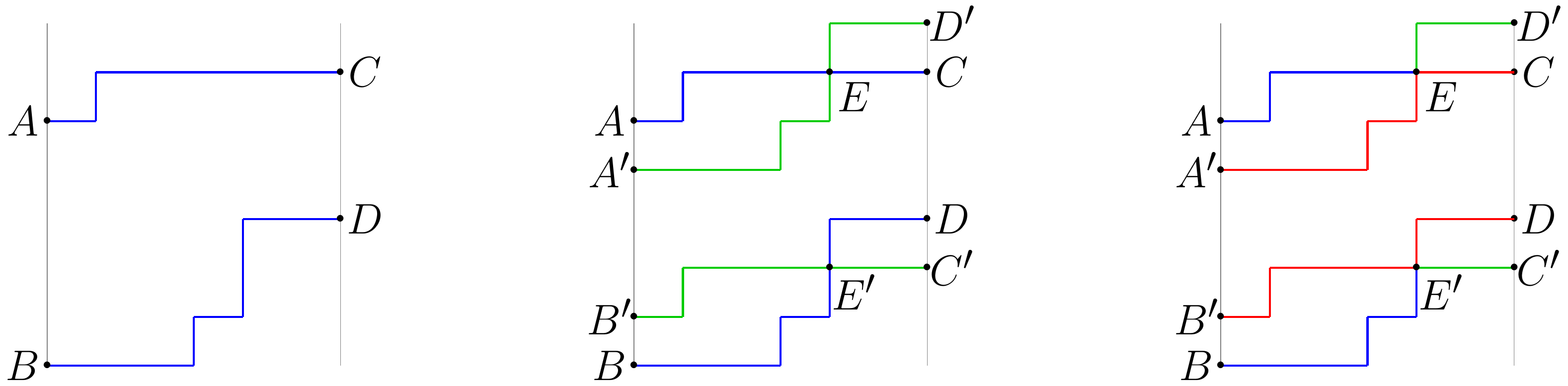}
		\caption{The lattice paths $\gamma$ and $\zeta$ (drawn in blue), the lattice paths $\gamma'$ and $\zeta'$ (drawn in green), and the lattice paths $\widehat{\gamma}$ and $\widehat{\zeta}$ (drawn in red).}
		\label{f:injection 1}
	\end{figure}

Let $\zeta'$ be the path obtained by translating  $\zeta$ by \. $(0, a_2-b_2-1)$\.,
so $\zeta'$ starts at \. $\A'=\A -\etwo$ \. and ends at \. $\D'=(d_1,d_2+a_2-b_2-1)$\..
Note that $\zeta'$ lies above~$\zeta$, that $\A'$ lies below~$\A$,
and that $\D'$ lies above $\C$, by the assumption that \. $a_2-b_2 > c_2-d_2$\..
Note also that $\zeta'$ does not necessarily belong in~$\Reg(P)$.
This implies that paths $\zeta$ and $\zeta'$ must intersect,
and let $\E$ be the first intersection point along these paths.

Let \. $\widehat{\gamma} := \zeta'(\A'\to \E) \circ \gamma(\E\to \C)$ \.
be the NE lattice path  $\A'\to \C$, such that $\widehat{\gamma}$
follows the path $\zeta': \A'\to \E$, then follows the path $\gamma:\E \to\C$.
Note that $\widehat{\gamma}\in \Reg(P)$ since both \. $\zeta'(\A' \to \E)$ \.
and \. $\gamma(\E\to \C)$ \. are contained in $\Reg(P)$.  Indeed, the former
is due to the minimality of $\E$, which implies that this portion of $\zeta'$
is below \. $\zeta\in \Reg(P)$.
	
Similarly, let $\gamma'$ be the path obtained by translating \ts $\gamma$ \ts
by \. $(0, -a_2+b_2+1)$.  Note that $\gamma'$ starts at $\B'=\B+\etwo$,
and that the first intersection point between $\gamma'$ and $\zeta$ is
\. $\E':=\E+(0,-a_2+b_2+1)$.
Let \. $\widehat{\zeta} := \gamma'(\B'\to \E') \circ \zeta(\E' \to \D)$ \.
be the NE lattice path $\B'\to \D$, such that \ts
$\widehat{\zeta}$ \ts follows the path $\gamma':\B'\to\E'$,
then follows the path $\zeta:\E'\to \D$.
Note that \. $\widehat{\zeta}\in\Reg(P)$, since  \.
$\gamma'(\B' \to \E'), \. \zeta(\E'\to \D) \in \Reg(P)$.
	
It follows from the construction above that \.
	$(\gamma',\zeta')\in \Kc(\A-\etwo,\C)  \times   \Kc(\B+\etwo,\D)$.
	This map is injective as $\gamma$ and $\zeta$ can be recovered uniquely by identifying the first intersection point $\E$.
	Furthermore, this is a weight-preserving map, since
\begin{equation}\label{eq:weight-pres}
\aligned
		 \wgt(\gamma)  \. + \.  \wgt(\zeta) \ & = \ \wgt\big(\gamma(\A \to \E) \big) \. + \.  \wgt\big(\gamma(\E \to \C) \big) \. + \. \wgt\big(\zeta(\B \to \E') \big) \. + \. \wgt\big(\zeta(\E' \to \D) \big)\\
		& = \ \wgt\big(\gamma'(\B' \to \E') \big)  \. + \.  (e_1-a_1) \times (a_2+b_2-1)  \. + \.
		\wgt\big(\gamma(\E \to \C) \big) \\
		& \hskip1.6cm + \, \wgt\big(\zeta'(\A' \to \E) \big)  \. - \.
		(e_1-a_1) \times (a_2+b_2-1)  \. + \. \wgt\big(\zeta(\E' \to \D) \big)\\
		&= \  \wgt\big(\zeta'(\A' \to \E) \big) \. + \.
		\wgt\big(\gamma(\E \to \C) \big)  \. + \.
		\wgt\big(\gamma'(\B' \to \E') \big)  \. + \.  \wgt\big(\zeta(\E' \to \D) \big)\\
		&= \ \wgt(\widehat{\gamma}) \. + \. \wgt(\widehat{\zeta})\..
\endaligned
\end{equation}	
	This completes the proof.
	\end{proof}

 \medskip

\begin{rem}\label{rem:weight-pres}
{\rm The equation~\eqref{eq:weight-pres} may seem remarkably coincidental,
but can be easily explained.  Note that when we switch paths at intersections,
the areas below paths can change but the sum of areas remain the same via
\ts $|U|+|V|=|U\cap V|+|U\cap V|$ \ts for all finite sets \ts $U,V$ \ts
of lattice squares.
}\end{rem}

\medskip

%\com{SH}{There is no need to repeat the opening line of Lemma~8.2 in Lemma 8.4, so I shorten them.}
\begin{lemma}\label{l: bijection 1 equality}
	Let \ts $\A,\B, \C, \D \in \Reg(P)$ \ts be as in Lemma~\ref{l:lattice path bijection 1}.
%Let \ts $\A,\B \in \Reg(P)$ \ts be on the same vertical line and with
%$\A$ above~$\B$, i.e.,\ \ts $a_1=b_1$ \ts and \ts $a_2 \geq b_2$.
%Let \ts $\C,\D \in \Reg(P)$ \ts be on a vertical line to the right
%of the line $(\A\B)$, and with $\C$ above $\D$.
We then have the following conditions for equalities in Lemma~\ref{l:lattice path bijection 1}:
	\begin{enumerate}
	\item[\textnormal{(a)}]
	If \. $|\A \B| >|\C\D|$, i.e.,\ \ts $a_2-b_2 > c_2-d_2$, then
	\begin{equation*}
				\aKr(\A-{\normalfont \etwo},\C) \,\cdot \, \aKr(\B+{\normalfont \etwo},\D) \  = \  	\aKr(\A,\C) \,\cdot \, \aKr(\B,\D)
			\end{equation*}
			if and only if both sides are zero, or
				\begin{equation*}
				\aKr(\A-{\normalfont \etwo},\C) \ = \ \aKr(\A,\C) \qquad \text{and} \qquad   \aKr(\B+{\normalfont \etwo},\D) \  = \  	 \aKr(\B,\D) \ = \ \aKr(\A,\D).
			\end{equation*}
			\item[\textnormal{(b)}]
			If \. $|\C \D|>|\A\B|$, then
			\begin{equation*}
				\aKr(\A,\C-{\normalfont\etwo}) \,\cdot \, \aKr(\B,\D+{\normalfont\etwo}) \ = \  	\aKr(\A,\C)\,\cdot \, \aKr(\B,\D)
			\end{equation*}
			if and only if both sides are zero, or
			\begin{equation*}
				\aKr(\A,\C-{\normalfont\etwo})   \ = \  	\aKr(\A,\C) \  = \ \aKr(\A,\D) \qquad \text{and} \qquad \aKr(\B,\D+{\normalfont\etwo})\ = \  \aKr(\B,\D)  .
			\end{equation*}			
			
		\end{enumerate}
In both cases, the equality of the number of paths implies the corresponding path collections coincide,
so the $q$-weights are also preserved.
\end{lemma}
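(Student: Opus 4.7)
My plan is to prove Lemma~\ref{l: bijection 1 equality} by a refined analysis of the explicit weight-preserving injection $\vk$ constructed in the proof of Lemma~\ref{l:lattice path bijection 1}. The key preliminary observation, immediate from the chain of equalities~\eqref{eq:weight-pres}, is that $\vk$ preserves the area statistic. Consequently, the $q$-polynomial inequality of Lemma~\ref{l:lattice path bijection 1} becomes an equality if and only if $\vk$ is a bijection, which happens if and only if the unweighted cardinality inequality becomes an equality. This single observation already establishes the equivalence between the cardinality equality and the $q$-weighted equality asserted in the last sentence of the lemma, since a weight-preserving injection between finite collections of nonnegatively weighted objects is a bijection iff the underlying cardinalities match.

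For the easy direction of the main biconditional (the listed conditions imply equality), I would simply multiply the stated pointwise identities; no further analysis of $\vk$ is required. The substantive content is in the converse. Assuming the product equality holds with both sides nonzero, $\vk:\Kc(\A,\C)\times\Kc(\B,\D) \to \Kc(\A-\etwo,\C)\times\Kc(\B+\etwo,\D)$ must be bijective. The inverse of $\vk$ translates $\widehat{\zeta}$ upward by $(0,a_2-b_2-1)$ to obtain a path $\widehat{\zeta}''$ from $\A$ to $\D'':=\D+(0,a_2-b_2-1)$, and then swaps $\widehat{\gamma}$ with $\widehat{\zeta}''$ at their first common lattice point. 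This inverse is well-defined precisely when every target pair satisfies $\widehat{\gamma}\cap\widehat{\zeta}''\neq\varnothing$ inside $\Reg(P)$. Hence bijectivity of $\vk$ is equivalent to the geometric condition that for every admissible choice, $\widehat{\gamma}$ and $\widehat{\zeta}''$ share a lattice point in $\Reg(P)$.

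The remaining work is to translate this geometric intersection condition into the listed pointwise identities by exhibiting explicit obstructing pairs whenever one of the identities fails. For instance, if $\aKr(\A-\etwo,\C)>\aKr(\A,\C)$, then there exists a path $\widehat{\gamma}\in\Kc(\A-\etwo,\C)$ whose initial step is rightward rather than upward to $\A$; paired with a suitable $\widehat{\zeta}$ whose translate $\widehat{\zeta}''$ remains strictly above $\widehat{\gamma}$, this yields a target pair with no intersection, contradicting bijectivity. The parallel arguments yield $\aKr(\B+\etwo,\D)=\aKr(\B,\D)$, while the additional identity $\aKr(\B,\D)=\aKr(\A,\D)$ captures the rigidity that every path from $\B$ to $\D$ must ascend all the way to $\A$ before turning right. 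The main technical obstacle will be organizing the case analysis to show that these three identities are jointly necessary and sufficient (with no spurious constraints) for the intersection property, and that the mirror argument handles part~(b) in the regime $|\C\D|>|\A\B|$. The final clause of the lemma, that equal cardinalities force the corresponding path collections to have identical $q$-weight multisets, is then a direct consequence of the weight-preservation of $\vk$.
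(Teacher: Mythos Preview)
Your overall strategy coincides with the paper's: both argue that the product equality forces the injection $\vk$ to be a bijection, and then analyze surjectivity. However, your outline has a genuine gap at the point where you characterize invertibility.

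You write that the inverse ``is well-defined precisely when every target pair satisfies $\widehat{\gamma}\cap\widehat{\zeta}''\neq\varnothing$ inside $\Reg(P)$.'' This is incomplete: even when $\widehat{\gamma}$ and the translate $\widehat{\zeta}''$ meet, the resulting preimage paths are obtained by swapping at the first intersection and then \emph{translating back}, and those translated segments need not lie in $\Reg(P)$. Your obstruction scheme, which looks only for non-intersecting pairs, can therefore fail. Concretely, if $\widehat{\gamma}$ begins with a right step $A'\to A'+\eone$ and $\widehat{\zeta}''$ begins with a right step $A\to A+\eone$, the two paths may already meet at $A+\eone$, so ``no intersection'' is not available; yet the pair may still fail to be in the image of $\vk$ because the putative preimage $\gamma=\widehat{\zeta}''(A\to E)\circ\widehat{\gamma}(E\to C)$ can leave $\Reg(P)$ along the translated portion.

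The paper exploits exactly this second failure mode. It fixes the \emph{lowest} path $\eta\in\Kc(A',C)$ and the \emph{highest} path $\xi\in\Kc(B',D)$, assumes $(\eta,\xi)$ has a preimage in $\Reg(P)$, and observes that the preimage path $\wh{\eta}$ then begins with the upward translate of the initial segment of $\xi$. Concatenating with the vertical segment $B'\to A$ and the tail of $\xi$ produces a path in $\Reg(P)$ from $B'$ to $D$ that is strictly higher than $\xi$, contradicting the extremal choice of $\xi$ unless the first intersection point $E$ already lies on the line $A'B'$. That forces $A'+\eone\notin\Reg(P)$, i.e., the entire vertical segment from $B$ up to $A$ lies on the lower boundary of $\Reg(P)$. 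All three identities, including $\aKr(\B,\D)=\aKr(\A,\D)$ (which your outline asserts but does not derive), then follow at once: every path in $\Reg(P)$ through any of $B,B',A'$ is forced to pass through~$A$.

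In short, the missing ingredient in your plan is the use of \emph{extremal} paths together with the ``preimage lies in $\Reg(P)$'' condition; mere non-intersection does not suffice, and it is this stronger argument that simultaneously yields the third identity you left unproven.
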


This lemma analyzes when equality in Lemma~\ref{l:lattice path bijection 1} occurs, which is equivalent to the lattice path involution $\vk$ being a bijection. We show that unless all these paths pass vertically through points $\A$ and~$\B$, see Figure~\ref{fig:equality}, there will always be an ``extreme'' pair of paths  not contained in the image of~$\vk$.
%\com{SH}{I reworded the last sentence to follow the grammatical rule of "unless+future tense".}

\begin{figure}[h!]
\begin{center}
\includegraphics[width=2in]{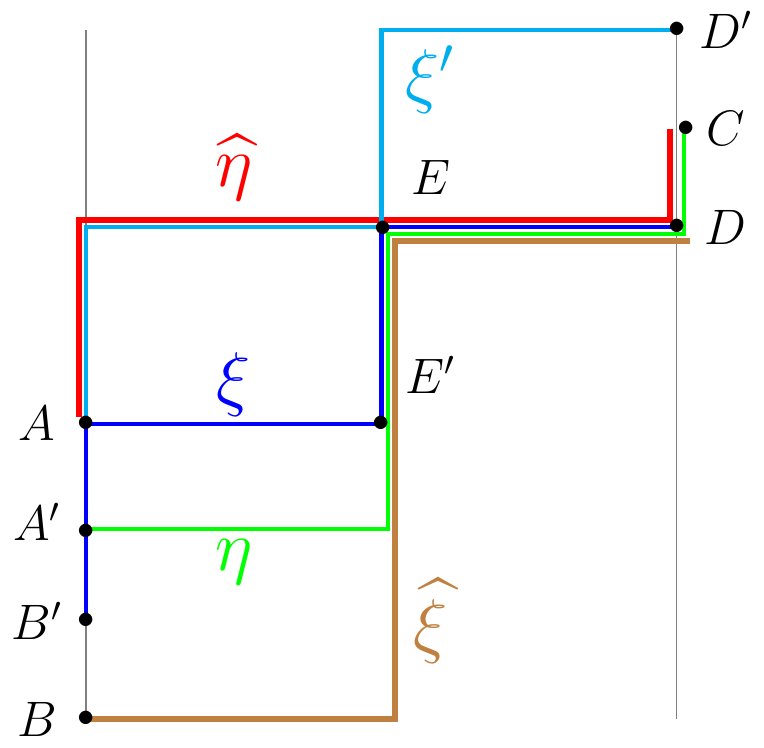}
\end{center}

\caption{The proof of the equality case, Lemma~\ref{l: bijection 1 equality}. The green path $\eta$ is the lowest in $\Reg(P)$ from $A' \to C$, and the blue path $\xi$ is the highest in the region from $B' \to D$. The cyan $\xi'=\xi+v$ is the vertical translation of $\xi$, which intersects $\eta$ at point $E$ (first such intersection). The inverse paths $\vk^{-1}(\eta,\xi) = (\wh{\eta}, \wh{\xi})$ are drawn on top in red and brown. } \label{fig:equality}
\end{figure}

\begin{proof}
We prove only part~(a), as part~(b) follows analogously.
Clearly, the ``if'' direction follows immediately.

For the ``only if'' direction, assume that the products are equal.
We will show that  \. $\aKr(\A-{\normalfont \etwo},\C) \, = \,\aKr(\A,\C)$ \. and \.
$\aKr(\B+{\normalfont \etwo},\D) \,  = \, \aKr(\B,\D)$.
From the proof of Lemma~\ref{l:lattice path bijection 1},
the equality implies that the injection $\vk$ is a bijection,
and hence surjective.

 Let $\. \eta: \A'=\A-\etwo \to \C$ \. be the {\em lowest} possible path within \ts $\Reg(P)$ \ts between these two points,
 and similarly \. $\xi:\B'=\B+\etwo \to \D$ \. be the {\em highest} possible path within \ts $\Reg(P)$ \ts between the given points, see Figure~\ref{fig:equality}.
 Let \. $\xi': \A \to \D'$ \. (which passes above the point $\C$) be the vertical translation of~$\xi$. Since $\vk$ is a bijection, we must have that $\eta$ and $\xi'$ intersect and their preimages belong to \ts $\Reg(P)$.

 First, if $d_2 \leq a'_2$, then the paths are $\xi = \B' \to (b_1',d_2) \to D$ and $\eta:A' \to (c_1,a_2')\to C$. Thus $\xi'$ lies strictly above $\eta$. Therefore, these paths do not intersect, and hence \ts $d_2>a_2'$.
 Since $\A' \in \Reg(P)$, $\A'$ is lower than $\D$ and $\xi$ is the highest path from $\B'$ to $\D$, so we must have that $\xi=\B'\to \A' \to \D$. Similarly, the lowest path must pass through $D$, so $\eta: \A' \to \D \to \C$. Furthermore, the path $\eta(\A'\to \D)$ is weakly below the path $\xi(\A' \to \D)$. Let $v=(0,a_2-b_2-1)$, the translation vector.
% \com{SH}{I add the conjunction "so" and "Further->Furthermore" to make clear the logical connection between sentences.}

Since there exists a preimage \ts $\vk^{-1}(\eta,\xi)$, this implies that paths \ts $\xi'$ and $\eta$ \ts intersect.
Let $\E$ be the first intersection of \ts $\xi'$ and~$\eta$. Since \ts $\eta(\A' \to \D)$ \ts is weakly below paths \ts
$\xi$ and $\xi +v$, the point $\E$ must belong to all three paths. Then
$$\vk^{-1}\bigl(\eta,\xi\bigr) \, = \, \bigl(\ts\wh{\eta},\ts\wh{\xi}\.\bigr)\ts,
$$
where \. $\wh{\eta} = (\xi(\B'\to \E')+v) \circ \eta(\E \to \C)$ \. is a path from \ts $\A$ to $\C$,
 and  \. $\wh{\xi} = (\eta(A'\to E)-v) \circ \xi(\E' \to \D)$ \. is a path from \ts
 $\B \to \D$.
 %\com{SH}{I change the structure of the sentence above to follow grammatical rule, no content change.}
 Now note that through our assumption of $\vk$ being a bijection, we must have that $\wh{\eta}$ and $\wh{\xi}$ are both in $\Reg(P)$. Note that $\wh{\eta}$ begins with the translation of $\xi$ by $v$, and the point \. $E \in \eta(\A' \to \D) \subset \Reg(P)$. Hence \. $(\B'\to \A)\circ \wh{\eta}(\A \to \E)\circ \xi(\E \to \D) \in \Reg(P)$ \. is a path which is higher than $\xi$ in $\Reg(P)$. This causes a contradiction except in the case when $\E$ is on the line through $\A'\B'$ (and has to be equal to $\A$). This means that the lowest path \. $\eta:\A' \to \C$ \. starts with a vertical step, i.e. $\A'+\eone \not \in \Reg(P)$, and hence the lower border of \ts $\Reg(P)$ \ts contains the segment $(\B,\A)$. This implies that,  every path in $\Reg(P)$ that passes through a point in \. $\{\B,\B',\A'\}$ \. must also pass through~$\A$.

 We conclude that \. $\aK(\A,\C)=\aK(\A',\C)$, and every path \ts $\A' \to \C$ in $\Reg(P)$ passes through $\A$.  Similarly,
 we have \. $\aK(\B,\D) = \aK(\B',D) = \aK(\A,\D)$.  Finally, for the $q$-analogues we also have
 \. $\aK_q(\A,\C)=\aK_q(\A',\C)$ \. and \. $\aK_q(\B,\D) = \aK_q(\B',\D)$, since the weights are
 preserved under~$\vk$.
 \end{proof}

\medskip
			
\begin{lemma}\label{l:lattice path bijection 2}
Let \, $\A,\B \in \Reg(P)$ \. be in the same horizontal line and with $\A$ to the left of~$\B$, i.e.\ \.
$a_2=b_2$ \ts and \ts $a_1 \leq b_1$.  Let $\C,\D \in \Reg(P)$ be in a vertical line that is above the line
\ts $(\A \B)$, i.e.\ \. $c_2,d_2 \geq a_2$, and with $\C$ below~$\D$.
\begin{enumerate}
	\item[\textnormal{(a)}]  If \. $|\A \B|>0$, i.e.\ \. $b_1-a_1 > 0$, then:
			\begin{equation*}
				\aKr_q(\A+{\normalfont \eone},\C)  \,\cdot \, \aKr_q(\B-{\normalfont \eone},\D) \ \geqslant \  	\aKr_q(\A,\C)  \,\cdot \, \aKr_q(\B,\D).
			\end{equation*}
			\item[\textnormal{(b)}] 		If \.  $|\C \D|>0$,
			then:
			\begin{equation*}
				\aKr_q(\A,\C+ {\normalfont \etwo})  \,\cdot \, \aKr_q(\B,\D-{\normalfont \etwo}) \ \geqslant \  	\aKr_q(\A,\C)  \,\cdot \, \aKr_q(\B,\D).
			\end{equation*}
		\end{enumerate}
	\end{lemma}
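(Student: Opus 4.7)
The plan is to mirror the proof of Lemma~\ref{l:lattice path bijection 1} with the translation direction rotated to match the new geometry: for part~(a), the starting points $\A,\B$ now sit on a common horizontal line rather than a vertical one, so the translations become horizontal; for part~(b) the endpoints $\C,\D$ are on a common vertical line, so we will instead translate endpoints vertically. As in Lemma~\ref{l:lattice path bijection 1}, the heart of the argument is a weight-preserving injection that swaps two NE paths at their first common point after one has been translated so as to share a basepoint with the other.

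Concretely, for part~(a) I will construct
\begin{equation*}
\vk \colon \Kc(\A,\C) \times \Kc(\B,\D) \ \to \ \Kc(\A+\eone,\C) \times \Kc(\B-\eone,\D)
\end{equation*}
as follows. Given $(\gamma,\zeta)$, let $\zeta'$ be the horizontal translate of $\zeta$ by $(a_1+1-b_1,\,0)$, so that $\zeta'$ starts at $\A+\eone$ and ends at $\D' := (c_1+a_1+1-b_1,\,d_2)$. The hypothesis $b_1>a_1$ together with $c_1 = d_1$ ensures that $\D'$ lies weakly to the left of $\C$ and strictly above it; combined with the fact that $\gamma$ starts weakly to the left of $\zeta'$ at the common $y$-coordinate, this forces the NE paths $\gamma$ and $\zeta'$ to cross. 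Taking $\E$ to be their first common point, I set $\wh{u} := \zeta'(\A+\eone \to \E)\,\circ\,\gamma(\E \to \C)$. Symmetrically, translating $\gamma$ horizontally by $(b_1-a_1-1,\,0)$ yields $\gamma': \B-\eone \to (c_1+b_1-a_1-1,\,c_2)$, whose first intersection with $\zeta$ is $\E' := \E+(b_1-a_1-1,\,0)$, and I set $\wh{v} := \gamma'(\B-\eone \to \E')\,\circ\,\zeta(\E' \to \D)$.

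The three verifications are the standard ones: (i) both $\wh{u}$ and $\wh{v}$ lie in $\Reg(P)$, since their terminal sub-paths are already in $\Reg(P)$, while the initial translated sub-paths stay in $\Reg(P)$ by the first-intersection minimality of $\E$ combined with the row-convexity of $\Reg(P)$; (ii) $\vk$ is injective, by recovering $\E$ from the pair $(\wh{u},\wh{v})$ as the first common point of $\wh{u}$ and the horizontal translate of $\wh{v}$ and then reversing the swap; (iii) the $q$-weight is preserved by the symmetric-difference identity of Remark~\ref{rem:weight-pres}, together with the observation that the horizontal shifts applied to $\zeta$ and $\gamma$ are exact negatives, so their area contributions cancel out. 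Part~(b) is proved by a completely analogous construction with $\etwo$ in place of $\eone$, translating $\gamma$ upward and $\zeta$ downward by $\etwo$ so that they terminate at $\C+\etwo$ and $\D-\etwo$ respectively; the hypothesis $|\C\D|>0$ plays exactly the role that $|\A\B|>0$ did in part~(a). The main technical obstacle, in both parts, is verification~(i): ensuring that the translated initial segment of the output path stays inside $\Reg(P)$, for which the first-intersection property and the appropriate convexity of $\Reg(P)$ are both essential.
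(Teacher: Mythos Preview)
Your argument for part~(a) is correct and essentially identical to the paper's: horizontally translate $\zeta$ to start at $\A+\eone$, swap with $\gamma$ at their first intersection $\E$, and invoke row-convexity together with the minimality of $\E$ to keep the translated initial segment inside $\Reg(P)$. One caution on your sketch of part~(b): shifting $\gamma$ and $\zeta$ by $\pm\etwo$ also moves their start points away from $\A,\B$, so the outputs would not lie in $\Kc(\A,\C+\etwo)\times\Kc(\B,\D-\etwo)$; a workable analogue instead translates $\zeta$ by $(0,c_2-d_2+1)$ so that it terminates at $\C+\etwo$, and swaps with $\gamma$ at their \emph{last} intersection (now using column-convexity for containment) --- but the paper, like you, leaves part~(b) as ``analogous.''
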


	\smallskip
	
	\begin{proof}
		We present only the proof of part (a), as the proof of part (b) is analogous.
		It suffices to show that there exists a weight-preserving injection between two set of pairs of paths
	\begin{equation*}
		\vk \, : \, \Kc(\A,\C) \. \times \. \Kc(\B,\D) \ \to \  	\Kc(\A+\eone,\C) \. \times \.  \Kc(\B-\eone,\D).
	\end{equation*}
		Let \. $(\gamma,\zeta)\in \Kc(\A,\C) \ts \times \ts \Kc(\B,\D)$.
We construct a pair \ts $(\widehat{\gamma}, \widehat{\zeta}) = \vk(\gamma,\zeta)$ \ts as follows.\footnote{We suggest
the reader employ Figure~\ref{figinjection 2} as a running example.}

\medskip

	\begin{figure}[hbt]
	\centering
	\includegraphics[width=1\linewidth]{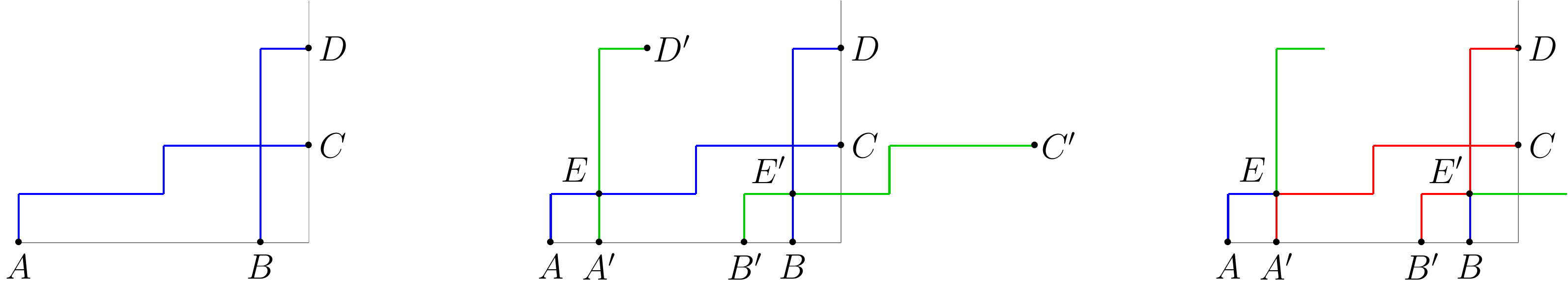}
	\caption{The lattice paths $\gamma$ and $\zeta$ (drawn in blue), the lattice paths $\gamma'$ and $\zeta'$ (drawn in green), and the lattice paths $\widehat{\gamma}$ and $\widehat{\zeta}$ (drawn in red).}
	\label{figinjection 2}
	\end{figure}
		
	Let $\zeta'$ be the path obtained by translating  $\zeta$ by \. $(a_1-b_1+1,0)$\.,
	so $\zeta'$ starts at \. $\A'=\A +(1,0)$ \.
	and ends at  \. $\D'=(d_1+a_1-b_1+1,d_2)$\..	
 	Note that $\zeta'$ lies to the left of $\zeta$,
 	that $\A'$ lies to right of $\A$,
 	and that $\D'$ lies to the left of $\D$,
 	by the assumption that \. $b_1-a_1 >0$\..
 	This implies that
 	the path $\zeta$ and $\zeta'$ must intersect,
 	and let $\E$ be the first intersection point along these paths.
 Let \. $\widehat{\gamma} := \zeta'(\A'\to \E) \circ \gamma(\E\to \C)$ \.
 be the NE lattice path from \ts $\A'\to \C$, such that \ts
 $\widehat{\gamma}$ \ts follows the path \. $\zeta': \A'\to \E$,
 then follows  the path \. $\gamma: \E\to \C$.
 	
 Let $\gamma'$ be the path obtained by translating  $\gamma$ by \. $(-a_2+b_2-1,0)$\..
 Note that $\gamma'$ starts at $\B'=\B-(1,0)$,
 and that the first intersection point between $\gamma'$ and $\zeta$ is
 \. $\E':=\E+(-a_2+b_2-1,0)$\..
 Let \. $\widehat{\zeta} := \gamma'(\B'\to \E') \. \zeta(\E', \D)$ \.
 be the NE lattice path from $\B'$ to $\D$, such that \ts
 $\widehat{\zeta}$ \ts follows the path \. $\gamma': \B'\to \E'$,
 then follows  the path \. $\zeta:\E'\to \D$.
 	
 It follows from the same argument as in the proof of Lemma~\ref{l:lattice path bijection 1},
 that~$\vk$ is an injective, weight-preserving map from
 \. $\Kc(\A,\C) \. \times \. \Kc(\B,\D)$ \. to \. $\Kc(\A+\eone,\C) \. \times \.  \Kc(\B-\eone,\D)$.
 This completes the proof.
	\end{proof}

\bigskip

\section{Lattice paths proof of Theorem~\ref{t:main-big-q}}\label{s:proof of theorem qCP}

\subsection{Setting up the injection}
We should mention that to simplify the notation, from this point on we will use
\ts $z_1 \gets x$, \ts $z_2 \gets y$, and \ts $z_3 \gets z$, and also $i \gets k$, $j \gets \ell$ in Theorem~\ref{t:main-big-q}.
%Let \ts $z_1, z_2, z_3\in X$ \ts be the fixed elements of $P$ as in
%Theorem~\ref{t:main-big-q}.
By relabeling $\Cr_1$ and $\Cr_2$  and substituting $q$ with $q^{-1}$ if
necessary, we will without loss of generality assume that \ts $z_2 \in \Cr_1$.

The idea is to consider the lattice paths in $\Reg(P)$ based on the position of the horizontal step above $z_2$, which also corresponds to the value of $L(z_2)$. The summands in $\aF_q(i,j)$ correspond to lattice paths, which can be grouped according to their horizontal steps above $z_2$, say $Y \to Y + \eone$. The $i$ and $j$ give the grid distance to $Y$ from this step to the horizontal (when all $z$s are in $\Cr_1$) step above $z_1$ and $z_3$ respectively, see Figure~\ref{f:GCP}. We can expand the difference
$$\aF_q(i,j)\ \aF_q(i+1,j+1) \ -\  \aF_q(i+1,j) \ \aF_q(i,j+1)$$
as sums of pairs of lattice paths passing through the same two points above $z_2$. Then $i,i+1$ and $j,j+1$ determine which paths pass closer to each other, and we can derive the inequality by multiple applications of Lemmas~\ref{l:lattice path bijection 1} and~\ref{l:lattice path bijection 2} depending on which chains $z_1,z_3$ belong to.

Let $i,j \geq 1$ and let $Y \in \Reg(P)$.
We denote by $\Gc(i,Y)$ the set of NE lattice paths \. $\zero \to Y$ in $\Reg(P)$ \.
that pass through 	\. $Y -I $ \. and \. $Y-I+ \de_1$\., where \.
$I = I(i,Y)$ \. and \. $\de_1$ \. are defined
as
\begin{align*}
		 I \ & := \
	\begin{cases}
		(y_1-k+1, i-y_1+k-1) \ \ & \text{ if \ \, $z_1\in\Cr_1$\ts, \, and \, $z_1 = \alpha_k$\.,}\\ 			
		(i-y_2+k-1, y_2-k+1) \ \ & \text{ if \ \, $z_1\in \Cr_2$\ts, \, and \, $z_1  = \beta_k$\..}
	\end{cases}  \\
\de_1 \  & := \
	\begin{cases}
		\eone & \ \, \text{ if } \ \, z_1 \in \Cr_1\.,\\
		\etwo & \ \, \text{ if } \ \, z_1 \in \Cr_2\..
	\end{cases}
	\end{align*}

Similarly, denote by $\Hc(j,Y)$ the set of NE lattice paths \. $Y+\eone \to (\ana,\bnb)$ \.
in $\Reg(P)$ that pass through 	\. $Y +J $ \. and \. $Y +J+ \de_3$\., where \. $J =J(j,Y)$
\. and \. $\de_3$ \. are defined as
\begin{align*}
				 J \ & := \
		\begin{cases}
			(m-y_1-1, j+y_1-m+1) \ \ & \text{ if \ \, $z_3\in \Cr_1$\ts, \, and \, $z_3 \. =: \. \alpha_m$\.,}\\ 			
			(j+y_2-m+1, m-y_2-1) \ \ & \text{ if \ \, $z_3\in \Cr_2$\ts, \, and \, $z_3 \. =: \. \beta_m$\..} \\
		\end{cases}  \\
	\de_3 \ & := \
	\begin{cases}
		\eone & \ \, \text{ if } \ \, z_3 \in \Cr_1\.,\\
		\etwo & \ \, \text{ if } \ \, z_3 \in \Cr_2\..
	\end{cases}
	\end{align*}
Finally, denote	
\[ 		
\aG_q(i,Y) \ := \ \sum_{\gamma \in \Gc(i,Y)} \. q^{\wgt(\gamma)} \qquad \text{and}  \qquad
\aH_q(j,Y) \ := \ \sum_{\gamma \in \Hc(j,Y)} \. q^{\wgt(\gamma)}\..
\]

Recall  the map $\phi$ defined in the previous section.
Each linear extension $L\in \Ec(P)$ such that
	\[
    L(z_2)=u, \quad  L(z_2)-L(z_1)=i \quad \text{and} \quad L(z_3)-L(z_2)=j,
    \]
corresponds to a NE lattice path \. $(0,0)\to (\ana,\bnb)$ in $\Reg(P)$ that passes through
	\[
    \Yu- I,  \quad  \Yu-I+ \de_1, \quad \Yu, \quad \Yu+\eone, \quad \Yu+J, \quad \Yu +J+\de_3\.,
    \]
where \. $\Yu \. := \. (\ell-1,u-\ell)$\., and $\ell$ is the integer such that \ts $z_2=\al_\ell$.
That is, such a linear extension corresponds to a lattice path where the first half is contained
in \. $\Gc\bigl(i,\Yu\bigr)$ \. and the second half is contained in \. $\Hc\bigl(j,\Yu\bigr)$\..
See Figure~\ref{figlattice path decomposition} for an example.
	
\begin{figure}[hbt]
\centering
\includegraphics[width=0.25\linewidth]{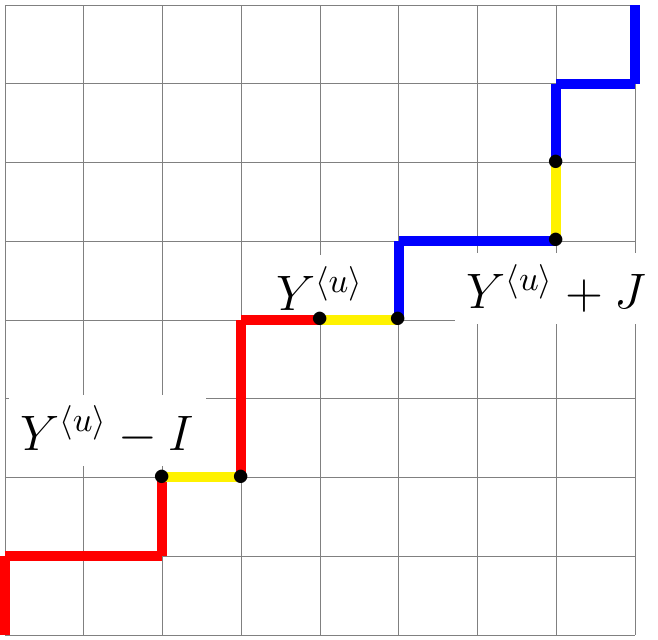}
\caption{A lattice path that corresponds to a linear extension
in $\Fc(i,j)$, with $i=j=4$  and $L(z_2)=u=8$.
Note that $z_1, z_2 \in \Cr_1$ and $z_3 \in \Cr_2$.
The first half of the lattice path $(0,0)\to (4,4)$ (in red)
is contained in \ts $\Gc\bigl(i,\Yu\bigr)$, and the second half of the path
$(5,4)\to (8,8)$ (in blue) is contained in \ts $\Hc\bigl(j,\Yu\bigr)$. }
\label{figlattice path decomposition}
\end{figure}

It now follows from the correspondence above that
	\begin{align*}
		\aF_q(i,j) \ = \ q^{\binom{\ana+1}{2}} \,  \sum_{u=1}^{\n} \, q^{u-\ell} \. \aG_q\bigl(i,\Yu\bigr) \, \aH_q\bigl(j,\Yu\bigr)\ts.
	\end{align*}
Applying the formula above  to the polynomials \. $\aF_q(i,j) \. \aF_q(i+1,j+1) $\. and \. $\aF_q(i+1,j) \. \aF_q(i,j+1)$\ts,
	we get
	\begin{equation*}
	\begin{split}
		& \aF_q(i,j) \, \aF_q(i+1,j+1)
		 \ = \ q^{\ana(\ana+1)} \,  \sum_{u=1}^{\n} \, \sum_{w=1}^{\n} \, q^{u+w-2\ell}  \. \aG_q\bigl(i,\Yu\bigr) \. \aH_q\bigl(j,\Yu\bigr) \ \aG_q\big(i+1,\Yw\big) \ \aH_q(j+1,\Yw),\\
		 & \aF_q(i+1,j) \. \aF_q(i,j+1)
		 \ = \ q^{\ana(\ana+1)}\, \sum_{u=1}^{\n}\. \sum_{w=1}^{\n} \, q^{u+w-2\ell}  \. \aG_q\big(i+1,\Yu\big) \. \aH_q(j,\Yu) \ \aG_q\big(i,\Yw\big) \ \aH_q\big(j+1,\Yw\big).
	\end{split}
	\end{equation*}
	Taking the difference between the two equation above, we get
\begin{equation}\label{eq:lattice path decomposition cross product}
\begin{split}
		& \aF_q(i,j) \. \aF_q(i+1,j+1) \. - \. \aF_q(i+1,j) \. \aF_q(i,j+1) \\
		& \hskip1.cm = \  q^{\ana(\ana+1)}\,	\sum_{u=1}^{\n} \. \sum_{w=1}^{\n} \, q^{u+w-2\ell}  \. \aH_q\big(j,\Yu\big) \   \aH_q\big(j+1,\Yw\big) \ \times \\
& \hskip3.cm \times \ \bigl[ \aG_q\big(i,\Yu\big) \ \aG_q\big(i+1,\Yw\big)  \ - \  \aG_q\big(i+1,\Yu\big) \ \aG_q\big(i,\Yw\big)  \bigr]\\
& \hskip1.cm = \ q^{\ana(\ana+1)} \, \sum_{1 \leq u <w \leq \n} \, q^{u+w-2\ell} \, \GCP_q\big(i,\Yu,\Yw\big) \, \HCP_q\big(j,\Yu,\Yw\big)\.,
\end{split}
\end{equation}
	where
	\begin{align*}
		\GCP_q(i,Y,V) \ &:= \   \aG_q(i,Y) \ \aG_q(i+1,V)  \ - \  \aG_q(i+1,Y) \ \aG_q(i,V),\\
		 \HCP_q(j,Y,V)
		 \  & := \   \aH_q(j,Y) \  \aH_q(j+1,V) \ - \
		 		\aH_q(j+1,Y) \   \aH_q(j,V).
	\end{align*}

\smallskip

Now observe that the theorem is reduced to the following result:

\smallskip

\begin{lemma}\label{l:GCP}
		Let $Y,V \in \Reg(P)$ be in the same vertical line and with $Y$ below $V$.
		Then
		\[ \GCP_q(i,Y, V) \ \geqslant \ 0 \qquad \text{ and } \qquad   \HCP_q(j,Y, V) \ \leqslant \ 0\..   \]
\end{lemma}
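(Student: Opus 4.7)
The plan is to reduce both inequalities to direct applications of Lemmas~\ref{l:lattice path bijection 1} and~\ref{l:lattice path bijection 2} via a three-piece decomposition. Each path $\gamma \in \Gc(i,Y)$ factors uniquely as $(\zero \to A_i(Y)) \circ (A_i(Y) \to B_i(Y)) \circ (B_i(Y) \to Y)$, where $A_i(Y) := Y - I(i,Y)$ and $B_i(Y) := A_i(Y) + \de_1$, because these two consecutive points force the middle step. Writing $c_i(Y)$ for the weight contributed by the forced step (the height of $A_i(Y)$ when $\de_1 = \eone$, and $0$ when $\de_1 = \etwo$), this gives
\[
\aG_q(i,Y) \ = \ q^{c_i(Y)} \. \aK_q(\zero, A_i(Y)) \. \aK_q(B_i(Y), Y).
\]
A direct computation using $v_1 = y_1$ shows $c_i(Y) + c_{i+1}(V) = c_{i+1}(Y) + c_i(V)$, so the $q^c$ factors cancel from the two terms defining $\GCP_q(i,Y,V)$.

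Using the identity $xy - x'y' = (x - x')\ts y + x'\ts (y - y')$ applied to nonnegative $q$-polynomials, the inequality $\GCP_q(i,Y,V) \geqslant 0$ then reduces to two separate coefficient-wise inequalities:
\begin{gather*}
\aK_q(\zero, A_i(Y)) \. \aK_q(\zero, A_{i+1}(V)) \ \geqslant \ \aK_q(\zero, A_{i+1}(Y)) \. \aK_q(\zero, A_i(V)), \\
\aK_q(B_i(Y), Y) \. \aK_q(B_{i+1}(V), V) \ \geqslant \ \aK_q(B_{i+1}(Y), Y) \. \aK_q(B_i(V), V).
\end{gather*}
When $z_1 \in \Cr_1$, all four $A$-points lie on the vertical line $x = k-1$ with $A_{i+1}(\cdot) = A_i(\cdot) - \etwo$, so the first inequality is Lemma~\ref{l:lattice path bijection 2}(b) applied with $\A = \B = \zero$, $\C = A_i(Y) - \etwo$, $\D = A_i(V)$, while the second is Lemma~\ref{l:lattice path bijection 1}(a) applied with $\A = B_i(V)$, $\B = B_i(Y) - \etwo$ on the line $x = k$ and $\C = V$, $\D = Y$ on the line $x = y_1$, where the hypothesis $|\A\B| = v_2 - y_2 + 1 > v_2 - y_2 = |\C\D|$ is satisfied. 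In the case $z_1 \in \Cr_2$, the analogous points lie on horizontal rather than vertical lines, and we invoke the symmetric versions of the same lemmas obtained by swapping $x \leftrightarrow y$ in their injection arguments; these reflected versions hold because $\Reg(P)$ is both row and column convex.

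The argument for $\HCP_q(j,Y,V) \leqslant 0$ is entirely parallel: decompose paths in $\Hc(j,Y)$ through $A'_j(Y) := Y + J(j,Y)$ and $B'_j(Y) := A'_j(Y) + \de_3$, check that the $q$-weights again cancel, and factor into two reduced inequalities on products of path-counting polynomials. The direction reverses from $\geqslant$ to $\leqslant$ because in the $\HCP$ definition the indices $j$ and $j{+}1$ are assigned to $Y$ and $V$ in the opposite order to the $\GCP$, placing the ``extreme'' endpoint configuration on the LHS instead of the RHS. The reduced inequalities then follow from Lemma~\ref{l:lattice path bijection 1}(b) and Lemma~\ref{l:lattice path bijection 2}(a) (and their $x \leftrightarrow y$ reflections when $z_3 \in \Cr_2$). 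The main obstacle is the combinatorial bookkeeping across the four subcases $(z_1 \in \Cr_1 \text{ or } \Cr_2) \times (z_3 \in \Cr_1 \text{ or } \Cr_2)$: for each one must identify the correct orientation of the passing points, verify the $q$-weight cancellation, and match the geometry to the correct lemma, but all four subcases fit the same template with only the choice of lemma and orientation varying.
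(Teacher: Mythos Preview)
Your approach is essentially the paper's own: both factor each $\aG_q(i,Y)$ through the forced step as $q^{c_i(Y)}\,\aK_q(\zero,A_i(Y))\,\aK_q(B_i(Y),Y)$, verify that the $q^{c}$ factors cancel in $\GCP_q$, and reduce $\GCP_q \geqslant 0$ to precisely the two product inequalities you write, each handled by a single application of Lemma~\ref{l:lattice path bijection 1} or~\ref{l:lattice path bijection 2}. The paper chains the two applications rather than using your identity $xy - x'y' = (x-x')y + x'(y-y')$, but this is equivalent bookkeeping. Your remark that the $z_1 \in \Cr_2$ subcase wants the $x\leftrightarrow y$ reflected versions is apt and arguably more careful than the paper's brisk citation of Lemma~\ref{l:lattice path bijection 2} there.

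One correction: your stated reason for the sign flip in $\HCP_q$ is wrong. In both $\GCP_q$ and $\HCP_q$, the smaller index is paired with $Y$ and the larger with $V$ in the first (positive) term --- the assignments are identical, not opposite. The sign flips for a geometric reason: increasing $i$ moves the forced point $Y - I(i,Y)$ \emph{toward the origin}, whereas increasing $j$ moves $Y + J(j,Y)$ \emph{toward $(\ana,\bnb)$}. Consequently, in $\aH_q(j,Y)\,\aH_q(j+1,V)$ the two forced points are \emph{farther apart} than in $\aH_q(j+1,Y)\,\aH_q(j,V)$, so the injections of Lemmas~\ref{l:lattice path bijection 1} and~\ref{l:lattice path bijection 2} bound the second term from below by the first, giving $\HCP_q \leqslant 0$. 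Your conclusion is correct; only the mechanism you cite needs to be replaced.
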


\medskip

\begin{proof}[{{Proof of Theorem~\ref{t:main-big-q}}}]
To obtain the theorem, apply the lemma to all the terms in~\eqref{eq:lattice path decomposition cross product}.
This gives
$$
\aF_q(i,j) \,\. \aF_q(i+1,j+1) \ \geqslant \ \aF_q(i+1,j) \,\. \aF_q(i,j+1)\ts,
$$
as desired.
\end{proof}

\medskip

\subsection{Proof of Lemma~\ref{l:GCP}}
We prove only the inequality \. $\GCP_q(i,Y, V) \. \geq \. 0$\.
as the proof of the other inequality is analogous.
		
	\begin{figure}[hbt]
	\centering
	\includegraphics[width=0.55\linewidth]{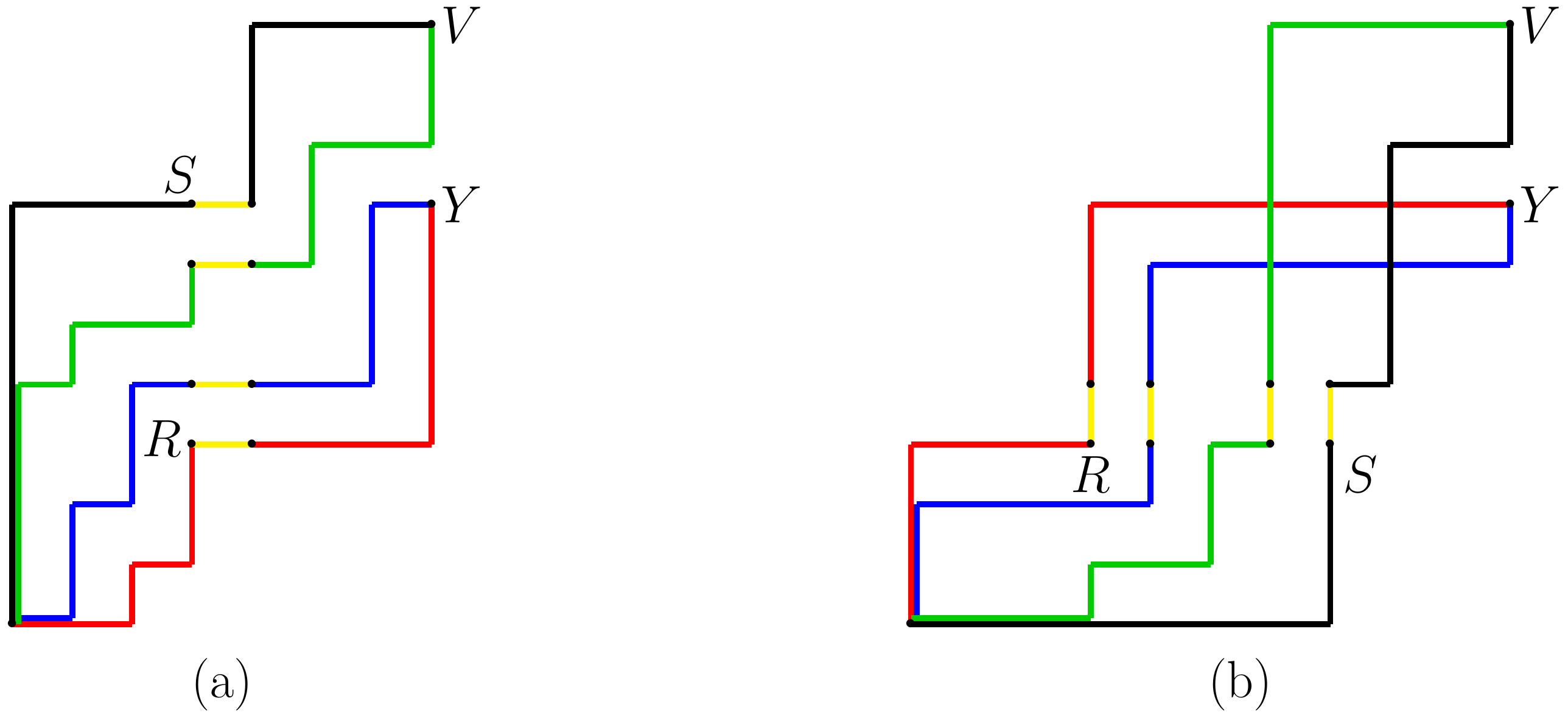}
	\caption{Two instances of lattice paths in $\Gc(i, Y)$ (in blue), $\Gc(i,V)$ (in black),  $\Gc(i+1,Y)$ (in red), and $\Gc(i+1,V)$ (in green).
	Note that $z_1 \in \Cr_1$ and $i=7$ in part (a), while $z_1 \in \Cr_2$ and $i=10$ in part (b).
	}
	\label{f:GCP}
	\end{figure}
		
\smallskip

We split the proof into two cases.
For the first case, suppose that $z_1 \in \Cr_1$.\footnote{We recommend
the reader to use Figure~\ref{f:GCP}~(a) as a running example.}
Let
		\[
            \Ga \ := \ Y \. - \. I(i+1,Y) \, =\, (\gar_1,\gar_2), \qquad \Ups \ := \ V \. -\. I(i,V) \,= \,(\ups_1,\ups_2)\ts.
        \]
        These are the points above $z_1$ where the lattice paths pass through.
In particular, the lattice paths in \. $\Gc(i,Y)$ \.
start at \ts $\zero$, ends at $Y$, and passes through \.
$\Ga+\etwo$ \. and \. $\Ga\ts +\ts\eone\ts +\ts\etwo$.
It then follows that
		\begin{align}\label{eqfoxtrot 1}
			\aG_q(i,Y) \ = \  \aK_q(\zero,\Ga+\etwo) \ q^{\gar_2+1} \ \aK_q(\Ga+\eone+\etwo,Y)\..
		\end{align}	
By an analogous reasoning, we have
				\begin{equation}\label{eq:foxtrot 2}
				\begin{split}
			\aG_q(i+1,Y) \ &= \  \aK_q(\zero,\Ga) \ q^{\gar_2} \, \aK_q(\Ga+\eone,Y)\ts,\\
			\aG_q(i,V) \ &= \  \aK_q(\zero,\Ups) \ q^{\ups_2} \, \aK_q(\Ups+\eone,V)\ts,\\
			\aG_q(i+1,V) \ &= \  \aK_q(\zero,\Ups-\etwo) \ q^{\ups_2-1} \, \aK_q(\Ups+\eone-\etwo,V)\ts.
			\end{split}
		\end{equation}
		It then follows from \eqref{eqfoxtrot 1} and \eqref{eq:foxtrot 2} that
\begin{align*}
\aG_q(i,Y) \, \aG_q(i+1,V) \  = \  q^{\gar_2+\ups_2} \,
\bigg( \aK_q(\zero,\Ups-\etwo) \, \aK_q(\zero,\Ga+\etwo) \bigg) \,
\bigg(  \aK_q(\Ups+\eone-\etwo,V) \, \aK_q(\Ga+\eone+\etwo,Y) \bigg).
\end{align*}	
We now apply Lemma~\ref{l:lattice path bijection 1}~(a) to
the last product term \. $ \aK_q(\Ups+\eone-\etwo,V) \. \aK_q(\Ga+\eone+\etwo,Y)$ \.
in the equation above, with \. $\A=\Ups+\eone$\ts, \. $\B=\Ga+\eone$\ts, \. $\C=V$ \. and \. $\D=Y$.  We get:
\begin{align*}
\aG_q(i,Y) \ \aG_q(i+1,V) \  \geqslant  \  q^{\gar_2+\ups_2} \ \bigg( \aK_q(\zero,\Ups-\etwo) \ \aK_q(\zero,\Ga+\etwo) \bigg) \, \bigg( \aK_q(\Ups+\eone,V) \ \aK_q(\Ga+\eone,Y)  \bigg).
\end{align*}	
We now apply Lemma~\ref{l:lattice path bijection 1}~(b) to first product term \.
$\aK_q(\zero,\Ups-\etwo) \. \aK_q(\zero,\Ga+\etwo)$ \. in the equation above.  We get:
with \. $\A=\B=\zero$\ts, \. $\C=\Ups$ \. and \. $\D=\Ga$,	
\begin{align*}
\aG_q(i,Y) \ \aG_q(i+1,V) \ \geqslant  \ q^{\gar_2+\ups_2} \ \bigg( \aK_q(\zero,\Ups) \ \aK_q(\zero,\Ga)  \bigg) \, \bigg(  \aK_q(\Ups+\eone,V) \ \aK_q(\Ga+\eone,Y)  \bigg).
\end{align*}	
		It then follows from \eqref{eq:foxtrot 2} that
\begin{align*}
\aG_q(i,Y) \ \aG_q(i+1,V) \  \geqslant \   \aG_q(i+1,Y) \ \aG_q(i,V)\ts.
\end{align*}
		This proves that \. $\GCP_q(i,Y, V) \. \geqslant \. 0$ \. for the first case.
		
		\smallskip
		
For the second case, suppose that $z_1 \in \Cr_2$.\footnote{We recommend
the reader to use Figure~\ref{f:GCP}~(b) as a running example.}
We write
		\[
                \Ga \ := \ Y\. -\. I(i,Y), \qquad \Ups \ := \ V \. -\. I(i+1,V)\..
        \]
		It then follows that
		\begin{equation}\label{eq:foxtrot 3}
		\begin{split}
			\aG_q(i,Y) \ &= \ \aK_q(\zero, \Ga+\eone) \ \aK_q(\Ga+\eone+\etwo,Y)\ts,\\
			\aG_q(i+1,Y) \ &= \ \aK_q(\zero, \Ga) \ \aK_q(\Ga+\etwo,Y)\ts,\\
			\aG_q(i,V) \ &= \ \aK_q(\zero, \Ups) \ \aK_q(\Ups+\etwo,V)\ts,\\
			\aG_q(i+1,V) \ &= \ \aK_q(\zero, \Ups-\eone) \ \aK_q(\Ups-\eone+\etwo,V)\ts.
		\end{split}
		\end{equation}
		It then follows from \eqref{eq:foxtrot 3} that
		\begin{align*}
			\aG_q(i,Y) \ \aG_q(i+1,V) \ = \  \bigg( \aK_q(\zero,\Ga+\eone) \
\aK_q(\zero,\Ups-\eone)  \bigg) \ \bigg(\aK_q(\Ga+\eone+\etwo,Y) \ \aK_q(\Ups-\eone+\etwo,V)\bigg)\..
		\end{align*}	
		We now apply Lemma~\ref{l:lattice path bijection 2}~(a) to
		the second product  term \. $\aK_q(\Ga+\eone+\etwo,Y) \. \aK_q(\Ups-\eone+\etwo,V)$ \. in the equation above,
		with \. $\A=\Ga+\etwo$\ts, \. $\B=\Ups+\etwo$\ts, \. $\C=Y$, \. $\D=V$,	
		\begin{align*}
		\aG_q(i,Y) \ \aG_q(i+1,V) \  \geqslant \  \bigg( \aK_q(\zero,\Ga+\eone)
        \ \aK_q(\zero,\Ups-\eone)  \bigg) \ \bigg(  \aK_q(\Ga+\etwo,Y) \ \aK_q(\Ups+\etwo,V)   \bigg)\..
		\end{align*}	
		We now apply Lemma~\ref{l:lattice path bijection 2}~(b) to
		the first product term \. $\aK_q(\zero,\Ga+\eone) \. \aK_q(\zero,\Ups-\eone) $ \. in the equation above,
		with \. $\A=\B=\zero$, \. $\C=\Ga$, \. $\D=\Ups$,	
\begin{align*}
\aG_q(i,Y) \ \aG_q(i+1,V) \  \geqslant \   \bigg( \aK_q(\zero,\Ga) \ \aK_q(\zero,\Ups)  \bigg) \ \bigg(  \aK_q(\Ga+\etwo,Y) \ \aK_q(\Ups+\etwo,V)   \bigg)\..
		\end{align*}	
				It then follows from \eqref{eq:foxtrot 3} that
		\begin{align*}
			\aG_q(i,Y) \ \aG_q(i+1,V) \  \geqslant \   \aG_q(i+1,Y) \ \aG_q(i,V)\ts.
		\end{align*}
This proves that \. $\GCP_q(i,Y, V) \. \geq \. 0$ \. for the second case,
and our proof is complete. \qed

\bigskip
	
\section{Proof of Theorem~\ref{t:cpc_equality}} \label{s:equality}
The cross-product equality is obtained by analyzing the proof in Section~\ref{s:proof of theorem qCP}
and applying Lemma~\ref{l: bijection 1 equality}.
We consider only the case when $z_1,z_2,z_3\in \Cr_1$, as the other cases are analogous.

\smallskip
Clearly, we have \. {\bf (a), (b), (c)}~$\Rightarrow$~\eqref{eq:cpc_equality}.
If \. {\bf (d)} holds, this implies that \ts $P = P_1 \cup P_2$, where \.
$P_1 := \{ x \, : \, x \preccurlyeq_P z_2 \}$ \. and \. $P_2 := \{y \, : \, y \succ_P z_2 \}$\., and every element in $P_1$ is  smaller than every element in $P_2$ by $\prec_P$.
%\com{SH}{I said the definition of series product directly here, as it is not used anywhere else.}
Then \. $\aF(i,j) = \aF'(i) \cdot \aF''(j)$, where $\aF'(i)$ is the number of linear extensions $L_1$ of $P_1$ \. s.t.\ \. $L_1(z_2) - L_1(z_1)=i$.
 Similarly, $\aF''(j)$ \ts is the number of linear extensions $L_2$ of $P_2$ \. s.t.\ \. $L_2(z_3)=j-1$. Then:
$$
\aF(i,j) \ \aF(i+1,j+1) \ = \ \aF'(i) \ \aF'(i+1) \ \aF''(j) \ \aF''(j+1) \ = \  \aF(i+1,j) \ \aF(i,j+1).
$$

We now prove \eqref{eq:cpc_equality}~$\Rightarrow$~\textbf{(a)},~\textbf{(b)},~\textbf{(c)},~or~\textbf{(d)}.
Suppose now that equation~\eqref{eq:cpc_equality} holds. Suppose that \. $\aF(i+1,j)\ \aF(i,j+1) >0$. Since \ts $\aF(i+1,j) >0$, then there is at least one linear extension $L$ of $P$, such that $L(z_2)-L(z_1)=i+1$ and $L(z_3)-L(z_2)=j$, and consider the one for which $L(z_2)=w$ is maximal. Hence \. $\aG(i+1, Y) \, \aH(j, Y) >0$ \. for \ts $Y = \Yw$, and \, $\aG(i+1, Y) \, \aH(j, Y) =0$ \. for \ts $Y$ higher than~$\Yw$.  Let $u$ be the minimal value for which \. $\aG(i,\Yu) \, \aH\big(j+1,\Yu\big) >0$.

Let us show that, if $L$ is a linear extension in one of the sets \. $\Fc(i,j)$, \. $\Fc(i+1,j)$,
\. $\Fc(i,j+1)$, or $\Fc(i+1,j+1)$,	then \. $u \leq L(z_2) \leq w$.
	Formally, we will prove that if \ts $t <u$, \ts then  \.
$\aG\bigl(i+1,\Yt\bigr) \, \aH\bigl(j,\Yt\bigr)=0$; the other cases are analogous.
	Suppose to the contrary, that \. $\aG\bigl(i+1,\Yt\bigr) \, \aH\bigl(j,\Yt\bigr) >0$.
	Then there is a path in \ts $\Reg(P)$ \ts that goes through \. $\zero$, \ts $A:=\Yu-I(i,\Yu)$, \ts $B:=A+\eone$ \ts and~$\ts \Yu$.
Similarly, there is a path in \ts $\Reg(P)$ \ts that goes through $\zero$, $C:=\Yt-I(i+1,\Yt)$, $D:=C+\eone$ \ts and~$\ts \Yt$.

Since $\Reg(P)$ is the region between two monotonous NE paths, it contains the segment \ts $\bigl(\Yt,\Yu\bigr)$ \ts and its $\eone$ translate, and similarly the segment $AC$ and its $\eone$ translate. Thus we can take $t=u-1$. Then the points $Y^{\<u-1\>} - I\bigl(i,Y^{\<u-1\>}\bigr)$, \ts $Y^{\<u-1\>} - I\bigl(i,Y^{\<u-1\>}\bigr) +\eone$, \ts $Y^{\<u-1\>}$, \ts$ Y^{\<u-1\>} + \eone$ are in $\Reg(P)$ and so $\aG\bigl( i, \Yt \bigr) >0$. Similarly, since $\Yu  + J\bigl(j,\Yu\bigr) = Y^{\< u-1\>}  +   J\bigl(j+1,Y^{\<u-1\>}\bigl) \in \Reg(P)$ we have that $\aH\bigl(j+1, \Yt \bigr) >0$. This contradicts the minimality of~$u$ and completes the proof of this claim.

\smallskip

There are now two cases.
Suppose first that $u<w$, then we have from the proof of Theorem~\ref{t:main-big-q}, and in particular equation~\eqref{eq:lattice path decomposition cross product} and Lemma~\ref{l:GCP}, that
$$
\aF(i,j+1) \ \aF(i+1,j) \ - \ \aF(i,j)\ \aF(i+1,j+1) \ \geq  \ - \, \GCP\big(i,\Yu,\Yw\big) \, \HCP\big(j,\Yu, \Yw\big)\.,
$$
since on the RHS, we have \. $\GCP(\cdot) \geq  0$ \. and \. $\HCP(\cdot) \leq 0$.
Since by \eqref{eq:cpc_equality} the LHS is equal to zero, we
must have \. $\GCP\big(i,\Yu,\Yw\big)=0$ \. or \. $\HCP\big(j, \Yu, \Yw\big)=0$.

We now show that $\GCP(i,\Yu,\Yw)=0$ leads to \textbf{(a)}.
Let $S$ be the point above $z_1$, such that the grid distance between $S$ and $\Yw$ is equal to~$i$.
In other words, define \. $S:=\Yw-I\big(i,\Yw\big)$.  Then every linear extension $L\in \Ec(P)$ for which \.
$L(z_2)-L(z_1)=i$ \. and \. $L(z_2)=w$, corresponds to a path which passes through the segment \ts
$\bigl(S,S+\eone\bigr)$.
Similarly, let $R$ be the point above $z_1$ at grid distance $(i+1)$ from $\Yu$, and let \. $R:=\Yu-I\big(i+1,\Yu\big)$.
See Figure~\ref{f:GCP}~(a), where $V=\Yw$ and~$Y=\Yu$.

Denote by \ts $M_1$ \ts the number of pairs \ts $(\zeta,\gamma)$ \ts
of paths \. $\zeta: \zero \to S \to (S+\eone) \to \Yw$ \. and \. $\gamma: \zero \to R \to (R+\eone) \to \Yu$ in $\Reg(P)$.
Similarly, denote by \ts $M_2$ \ts the number of pairs \ts $(\zeta',\gamma')$ \ts of paths \.
$\zeta': \zero \to (S-\etwo) \to (S-\etwo+\eone) \to \Yw$ \. and \. $\gamma': \zero \to (R+\etwo)\to (R+\etwo+\eone) \to \Yu$ in $\Reg(P)$.
Then \. $\GCP(i,\Yu,\Yw) =0$ \. is equivalent to \. $M_1 = M_2$.

% {\small \nin
% \com{IP}{This is completely rewritten first by SH and then by me.}}

 On the other hand, by Lemma~\ref{l:lattice path bijection 1}, we have:
$$
\aK\bigl(S+\eone, \Yw\bigr) \ \aK\bigl( R+\eone, \Yu\bigr) \ \leq \ \aK\bigr(S-\etwo +\eone, \Yw\bigr) \ \aK\big( R+\etwo + \eone, \Yu\big)
$$
and
$$
\aK(\zero, S)\ \aK(\zero, R) \ \leq \ \aK(\zero, S-\etwo) \ \aK(\zero, R+\etwo).
$$
Since $\GCP(i, \Yu,\Yw)=0$ both of these inequalities have to be equalities. We now apply Lemma~\ref{l: bijection 1 equality} for these two cases (paths starting at $\zero$, and paths ending at $\Yu$ and~$\Yw$) and its analysis on the possible paths in case of equality. It implies that, for every $T$
in the segment $SR$,
 all the paths \. $(T+\eone) \to \Yu$ \. in $\Reg(P)$ must pass through \ts $(S + \eone)$.  Similarly, all paths \. $\zero \to T$ \. in $\Reg(P)$ must pass through~$R$.
  Thus, for all $t$ satisfying $u \leq t \leq w$,
  we have  \. $\aG\bigl(i,\Yt\bigr) \. = \. \aG\bigl(i+1,\Yt\bigr)$.  In other words,
  the number of paths \. $\zero \to \Yt$ \. in \ts $\Reg(P)$ \ts passing through point \ts $T:=\Yt-I(i,\Yt)$ and $T+\eone$,
  is equal to the  number of paths passing through \. $T-\etwo =\Yt-I(i+1,\Yt)$ \. and \. $T-\etwo+\eone$.
% Since this also restricts the intermediate paths, this implies that \. $\GCP(i,Y,V)=0$ for all $Y,V$, the paths all pass through points $R$ and~$S$.

% {\small
% \com{IP}{SH, please fix this sentence.}}
% \reply{SH}{I have edited the last two three sentences, as they are all connected.}

%
This implies:
\begin{alignat*}{3}
		\aF(i,j) \ & = \  \sum_{u \leq t \leq w} \aG\bigl(i,\Yt\bigr) \, \aH\bigl(j,\Yt\bigr) \ &&= \ \sum_{u \leq t \leq w} \aG\bigl(i+1,\Yt\bigr) \, \aH\bigl(j,\Yt\bigr)  \ && = \ \aF(i+1,j),\\
		\aF(i,j+1) \ & = \  \sum_{u \leq t \leq w} \aG\bigl(i,\Yt\bigr) \, \aH\bigl(j+1,\Yt\bigr) \ &&= \ \sum_{u \leq t \leq w} \aG\bigl(i+1,\Yt\bigr) \, \aH\bigl(j+1,\Yt\bigr)  \ && = \ \aF(i+1,j+1),
\end{alignat*}
 which leads to case {\bf (a)}.
The case \. $\HCP\big(j,\Yu,\Yw\big)=0$ \. similarly leads to {\bf (b)}.

We now show that the case $u=w$ lead to case \textbf{(d)}.
Suppose to the contrary that there exists $t \neq u$ such that $L(z_2)=t$, for some \ts $L\in \Ec(P)$.
Suppose  $t < u$, the case $t>u$ follows analogously. Then $\Yt$ is in $\Reg(P)$.
By the geometry of \. $\Reg(P)$, the segment \. $\bigl(\Yt,\Yu\bigr) $ \. is also in $\Reg(P)$,
and so \. $Y^{\<u-1\>}$ \. is in  $\Reg(P)$.  Since \. $\aG\big(i+1,\Yu\big)>0$, we have points
$$
R \, = \, \Yu - I\big(i+1,\Yu\big) \, = \, Y^{\<u-1\>}-I\big(i,Y^{<u-1>}\big) \ \text{ and }  \ R+\eone
$$
both contained in \ts $\Reg(P)$.

Since the boundaries of \ts $\Reg(P)$ \ts are NE paths, there must be a path \. $\zero \to  R \to   (R+\eone) \to Y^{\<u-1\>}$ \. in \ts $\Reg(P)$.
Similarly, on the other side, there is a path \. $\bigl(Y^{\<u-1\>} +\etwo\bigr) \to \bigl(\Yu + J\bigr) \to \big(\Yu+J+\eone\big) \to Q$, where $J:=J\big(j,\Yu\big)$. Thus we have \. $\aG\big(i,Y^{\<u-1\>}\big) >0$ \. and \. $\aH\big(j+1,Y^{\<u-1\>}\big)>0$, contradicting the minimality of~$u$.  This completes the proof of the first part of the theorem.

\smallskip

For the second part, we clearly have \eqref{eq:cpc_equality-q} implies~\eqref{eq:cpc_equality} by setting \ts $q=1$.
In the opposite direction, the first part states that either of \ts {\bf (a)}--{\bf (d)} \ts holds.  In case
\ts {\bf (c)} \ts both sides are zero, and in case \ts {\bf (d)} \ts equality~\eqref{eq:cpc_equality-q}
follows immediately since both sides give a $q$-counting of the same family of quadruples of paths.  In case
\ts {\bf (a)}, we have \. $\aG_q\bigl(i,\Yt\bigr) = q \ts \aG_q\bigl(i+1,\Yt\bigr)$ for every~$t$, and
the above calculation gives
$$
\aF_q(i,j) \ = \ q\. \aF_q(i+1,j) \quad \text{and} \quad \aF_q(i,j+1) \ = \ q\. \aF_q(i+1,j+1)\ts.
$$
This gives~\eqref{eq:cpc_equality-q} as the $q$-terms cancel.  Finally, the case  \ts {\bf (b)} \ts
is analogous to \ts {\bf (a)}. This completes the proof of the second part of the theorem.   \qed

\bigskip

\section{Final remarks and open problems}\label{s:finrem}

\subsection{} \label{ss:finrem-hist}
The number $e(P)=|\Ec(P)|$ of linear extensions was shown to be $\SP$-complete for
general posets by Brightwell and Winkler~\cite{BW}.  Recently, it was shown to be
$\SP$-complete for \emph{dimension two posets}, \emph{height two posets}, and for
\emph{incidence posets}.  In the opposite directions, there are several classes of
posets where computing $e(P)$ can be done in polynomial time, see a historical
overview in~\cite{DP}.  Note that in contrast to many other $\SP$-complete problems,
the decision problem \. $e(P)>^{?}0$ \. is trivial, and that $e(P)$ has a polynomial time
$(1\pm \ve)$ approximation (ibid.) This make the problem most similar to the
\ts {\sf BINARY PERMANENT}, where the decision problem is classically in~{\sc P}.

\subsection{} \label{ss:finrem-1323}
The \ts $\frac{1}{3}-\frac{2}{3}$~Conjecture~\ref{conj:1323} \ts was posed
independently by Kislitsyn~\cite{Kis} and Fredman~\cite{Fre} in the context
of sorting.  The currently best general bounds are obtained in~\cite{BFT}, which
both used and extended the arguments in~\cite{KS}.  As mentioned in the introduction,
the author's main lemma is the proof of the Cross--Product Conjecture~\ref{conj:CP}
for special values \ts $k=\ell=1$.

Note that there seem to be evidence that the \ts $\frac{1}{3}-\frac{2}{3}$~conjecture \ts
is unattainable by means of general poset inequalities, see a discussion
in~\cite[p.~334]{BFT}.  In a different direction, much effort has been made
to resolve the conjecture in special cases, see e.g.\ \cite[$\S$1.3]{CPP1}
for a recent overview.

We should also mention that the constant \ts $\frac13$ \ts is tight for a $3$-element
poset, but is likely not tight for many classes of posets such as posets of larger
width and indecomposable posets. Notably, there is a robust recent literature
on getting better bounds for posets of width two, see e.g.~\cite{Chen,Sah}.

\subsection{} \label{ss:finrem-CPC}
When stating CPC in~\cite{BFT}, the authors were explicitly motivated by~\cite{KS},
but they did not seem to realize that CPC easily implies the Kahn--Saks
Theorem~\ref{t:KS}.  This implication is described in~$\S$\ref{ss:power-KS}.
Note that it increases the width of the poset, so our proof of CPC for width two posets
is by itself inapplicable.

The implication above suggests that in full generality, perhaps one should
look for a geometric proof of CPC rather than refine combinatorial arguments.
Indeed, as of now, there is no combinatorial proof of the Kahn--Saks
inequality~\eqref{eq:KS-ineq} in full generality. If anything, the passage
of time since the powerful  FKG and XYZ inequalities were discovered
(see e.g.~\cite[Ch.~6]{AS}), suggests that  inequalities
such as CPC are fundamentally harder in their nature (cf.~\cite{Pak}).

Perhaps, this can be explained by the FKG and XYZ inequalities being
in the family of \emph{correlation inequalities} (inequalities involving
only relations $x\prec y$), while the Kahn--Saks and cross--product
inequalities being in the family of \emph{coordinate-wise
inequalities} (inequalities involving the relations \ts $L(y)-L(x)=i$).
In other words, the latter involve finer statistics of linear extensions.

Finally, CPC is closely related to the \emph{Rayleigh property} which
plays an important role in the study of \emph{negative dependence} in
probability and combinatorics, see~\cite{BBL,BH}.  We also refer to~\cite{Huh}
for a recent broad survey of such quadratic inequalities
from algebraic and geometric points of view.

\subsection{} \label{ss:finrem-KS}
In a forthcoming paper~\cite{CPP2}, we derive the $q$-analogue of the Kahn--Saks
inequality~\eqref{eq:KS-ineq} for width two posets using the lattice paths
approach.  Formally, let
\begin{equation}\label{eq:def-weight-q-analogue-KS}
\aF_q(k) \ := \  \sum_{L}  \, q^{\wgt(L)}\.,
\end{equation}
where the summation is over all linear extensions \ts $L\in \Ec(P)$,
such that \ts $L(y)-L(x)=k$.

\smallskip

\begin{thm}[{\rm\defn{$q$-Kahn--Saks inequality}~\cite{CPP2}}]\label{t:KS-q}
Let \. $P=(X,\prec)$ \. be a finite poset of width two, let $(\Cr_1,\Cr_2)$
be a partition of~$P$ into two chains.  For all distinct elements \ts $x,y,z\in X$,
we have:
\begin{equation}\label{eq:KS-ineq-q}
\aFr_q(k)^2 \ \geqslant \ \aFr_q(k-1) \ \aFr_q(k+1) \quad \text{for all} \quad k\. > \. 1\ts,
\end{equation}
where \ts $\aFr_q(k)$ \ts is defined in~\eqref{eq:def-weight-q-analogue-KS}, and
the inequality between polynomials is coefficient-wise.\end{thm}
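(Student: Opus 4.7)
The plan is to mimic the lattice path strategy of Sections~\ref{s:lattice}--\ref{s:proof of theorem qCP}, adapted to the single-parameter setting. Via the bijection $\phi$ of Lemma~\ref{l:interpretation lattice path}, each linear extension becomes a NE lattice path in $\Reg(P)$. After relabeling $\Cr_1\leftrightarrow\Cr_2$ and/or substituting $q\leftrightarrow q^{-1}$ if needed, assume $y\in \Cr_1$ with $y=\alpha_\ell$. If $L(y)=u$, then $\phi(L)$ passes through the horizontal step $\bigl(Y^{\<u\>},Y^{\<u\>}+\eone\bigr)$ with $Y^{\<u\>}=(\ell-1,u-\ell)$, and the condition $L(y)-L(x)=k$ forces $\phi(L)$ to also pass through a neighbouring pair of lattice points at grid distance $k$ behind $Y^{\<u\>}$. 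As in Section~\ref{s:proof of theorem qCP}, define $\aG_q(k,Y)$ as the $q$-sum of NE lattice paths $\zero\to Y$ in $\Reg(P)$ passing through this $x$-pair, and $\aH_q(Y)$ as the $q$-sum of NE paths $Y+\eone\to(\ana,\bnb)$ in $\Reg(P)$. Then
\[
\aF_q(k) \ = \ q^{\binom{\ana+1}{2}} \sum_{u=1}^{\n} q^{u-\ell}\, \aG_q\bigl(k,Y^{\<u\>}\bigr)\,\aH_q\bigl(Y^{\<u\>}\bigr).
\]

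Expanding $\aF_q(k)^2 - \aF_q(k-1)\,\aF_q(k+1)$ and symmetrizing the double sum over pairs $(u,w)$, I would reduce the theorem to two $q$-log-concavity statements:
\begin{align*}
\mathrm{(A)} \quad & \aG_q(k,Y)^2 \ \geqslant \ \aG_q(k-1,Y)\,\aG_q(k+1,Y) \quad \text{for every } Y\in\Reg(P); \\
\mathrm{(B)} \quad & 2\,\aG_q\bigl(k,Y^{\<u\>}\bigr)\aG_q\bigl(k,Y^{\<w\>}\bigr) \ \geqslant \ \aG_q\bigl(k{-}1,Y^{\<u\>}\bigr)\aG_q\bigl(k{+}1,Y^{\<w\>}\bigr) \. + \. \aG_q\bigl(k{-}1,Y^{\<w\>}\bigr)\aG_q\bigl(k{+}1,Y^{\<u\>}\bigr)
\end{align*}
for $u<w$. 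Note that the weights $q^{u-\ell}q^{w-\ell}$ and the $\aH_q$ factors are symmetric under $u\leftrightarrow w$, which is what makes the symmetrization clean.

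Statement (A) should follow from a two-step application of Lemma~\ref{l:lattice path bijection 1}(b) (or Lemma~\ref{l:lattice path bijection 2}(b), according to whether $x\in\Cr_1$ or $x\in\Cr_2$): factoring $\aG_q(k\pm 1,Y)$ into an initial segment $\zero\to X_{k\pm1}$ and a final segment $X_{k\pm 1}+\eone\to Y$, the $x$-points $X_{k-1}$ and $X_{k+1}$ lie on a common vertical (or horizontal) line straddling $X_k$, and swapping at the first intersection transports both pairs onto the $X_k$-point. This is a direct analogue of the two applications of Lemma~\ref{l:lattice path bijection 1} used in the proof of Lemma~\ref{l:GCP}.

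The main obstacle is statement (B), the genuine cross-term. Here the two pivots $Y^{\<u\>}$ and $Y^{\<w\>}$ differ, so neither $\aG_q\bigl(k{-}1,Y^{\<u\>}\bigr)\aG_q\bigl(k{+}1,Y^{\<w\>}\bigr) \leqslant \aG_q\bigl(k,Y^{\<u\>}\bigr)\aG_q\bigl(k,Y^{\<w\>}\bigr)$ nor its $(u,w)$-swap need hold on its own; only the symmetrized sum is controllable. My plan is to exhibit a single weight-preserving injection from the disjoint union of the two path-quadruple families on the right into two copies of the family on the left, implemented by two first-intersection swaps of Lemma~\ref{l:lattice path bijection 1}: one swap exchanges the $\zero\to X_{k-1}^{(\bullet)}$ initial segments between the two paths, and a second swap exchanges their $X_{k+1}^{(\bullet)}+\eone\to Y^{\<\bullet\>}$ final segments; depending on the relative vertical order of the two $X$-points the ``correct'' swap is chosen, and the other pairing is sent into the second copy. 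Weight preservation in each swap is automatic from the identity $|U|+|V|=|U\cap V|+|U\cup V|$ for the sets of unit squares below the paths, exactly as in Remark~\ref{rem:weight-pres}, so no additional bookkeeping is needed for the $q$-statistic. The delicate point, and the core technical issue, will be verifying that the first intersection always exists in $\Reg(P)$ and that the swapped segments remain inside $\Reg(P)$; this is where the assumption of width two is essential, since $\Reg(P)$ is row- and column-convex.
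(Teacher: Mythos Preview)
The paper does not actually prove this theorem: it is quoted from the forthcoming paper~\cite{CPP2}, and the authors explicitly warn that ``the details are surprisingly intricate and go beyond the scope of this paper.'' So there is no in-paper proof to compare against. More importantly, your reduction to (A) and (B) does not go through, because (B) is \emph{false} in general.

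Take $P=C_3+C_3$ (two disjoint chains of length~$3$), so $\Reg(P)=[0,3]\times[0,3]$, and let $x=\alpha_1$, $y=\alpha_2$. Then $Y^{\<u\>}=(1,u-2)$ for $u\in\{2,3,4,5\}$, and since the $x$-column is $0$, one computes $\aG(k,Y^{\<u\>})=1$ if $1\le k\le u-1$ and $0$ otherwise. For $k=2$, $u=2$, $w=4$ the bracket in (B) is
\[
2\,\aG(2,Y^{\<2\>})\,\aG(2,Y^{\<4\>}) \; - \; \aG(1,Y^{\<2\>})\,\aG(3,Y^{\<4\>}) \; - \; \aG(1,Y^{\<4\>})\,\aG(3,Y^{\<2\>}) \ = \ 2\cdot 0\cdot 1 \, - \, 1\cdot 1 \, - \, 1\cdot 0 \ = \ -1,
\]
so (B) fails. (One can check directly that the full sum still comes out nonnegative: with $\aH(Y^{\<u\>})=6-u$ the diagonal $(A)$-contribution is $+9$, while the total off-diagonal $(B)$-contribution is $-3$, matching $\aF(2)^2-\aF(1)\aF(3)=36-30=6$.) Thus the negative $(u,w)$-terms are genuinely compensated by other terms in the double sum, not individually nonnegative. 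Your two-swap injection cannot succeed for such pairs because there is no injection from a nonempty set into an empty one. This is precisely the reason the straightforward adaptation of the $\GCP/\HCP$ argument from Section~\ref{s:proof of theorem qCP} breaks down: in the CPC setting the two varying parameters $i$ and $j$ land on opposite sides of the pivot $z_2$, so after the antisymmetrization step each $(u,w)$-summand factors as $\GCP\cdot\HCP$ with a fixed sign; here only $k$ varies and it lives entirely on the $\aG$-side, so the $\aH$-factors are symmetric and give no sign control. A correct argument must group the $(u,w)$-terms differently (or use a different pivot altogether), and that regrouping is where the ``surprisingly intricate'' details of~\cite{CPP2} enter.
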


\smallskip

Note that the inequality~\eqref{eq:KS-ineq-q} does not seem to follow from
our $q$-analogue~\eqref{eq:CP-ineq-q} of the cross--product inequality,
because of the width increase described in~$\S$\ref{ss:finrem-CPC}.
Nor does~\eqref{eq:KS-ineq-q}  seem to follow from
geometric techniques in~\cite{KS,Sta}. While the tools involved in the proof
of Theorem~\ref{t:KS-q} are somewhat similar to the tools in this paper, the
details are surprisingly intricate and goes beyond the scope of this paper.

\subsection{} \label{ss:finrem-total}
There are classical connections between \emph{log-concavity} and
\emph{total positivity}, see e.g.~\cite{Bre}. Our property of
nonnegative \ts $2\times 2$ \ts minors is similar but weaker
than the total nonnegativity.  There are two reasons for us using
this weaker property: practical and technical.  On the one hand,
the \ts $2\times 2$ \ts minors suffice for our purposes, while
signs of large size minors does not seem to follow from our
analysis of admissible vectors in Section~\ref{s:cross product relation}.

Initially we believed that our algebraic approach points towards total
nonnegativity of matrix \ts $\bF_P^{\ts \vee}$ \ts obtained from \ts
$\bF_P$ \ts by reversing the order of the second index.  A counterexample
to this natural conjecture was recently found by Jacob B.~Zhang by computer
experiments.\footnote{Personal
communication (May 5, 2021). }  Compare this with
Lemma~\ref{l:characteristic matrix}, which implies that the characteristic
matrix~$\bN_P$ is \emph{totally nonnegative} for all posets $P$ of
width two.

\subsection{} \label{ss:finrem-BFT}
The fundamental idea of splitting linear extensions $\Ec(P)$
into two parts is one common feature of the proof in~\cite{BFT} and
both our proofs (see Sections~\ref{s:proof of CPC} and~\ref{s:proof of theorem qCP}).
Curiously, in~\cite[p.~338]{BFT} the authors suggest that the case \ts $k=\ell=1$
\ts of the Generalized Cross--Product Conjecture~\ref{conj:CP-gen} can be obtained
by their methods.  We also believe this to be the case.  It would be interesting
to see if this approach can be utilized to derive other results, perhaps beyond
the cross--product inequality framework.

\subsection{} \label{ss:finrem-GenCPC}
A casual reader might conclude that Cross--Product Conjecture~\ref{conj:CP}
implies  Generalized Cross--Product Conjecture~\ref{conj:CP-gen} by the following
argument:  write \ts $\aF(k+i,m)/\aF(k,m)$ \ts as a telescoping product and apply
the CPC to the factors shows that the ratio is non-increasing in~$m$, giving the
GCPC.  This would be true if it was clear that all the factors are nonzero.
As it happens, determining when \ts $\aF(i,j)=0$ \ts is rather difficult; see
\cite[$\S$8]{CPP2} where the Kahn--Saks inequality case of \ts $\aF(i)=0$ \ts 
was resolved. We intend to pursue this direction in the forthcoming paper~\cite{CPP3}.

\subsection{} \label{ss:finrem-q}
The number \ts $e(P)=|\Ec(P)|$ \ts of linear extensions already has a notable
$q$-analogue generalizing \emph{major index} of permutations.  This was introduced
by Stanley, see e.g.~\cite[$\S$3.15]{EC} and~\cite{KimS} for a more recent references.
Note that this $q$-analogue depends only on the poset~$P$, even though the underlying
statistics depends on the fixed linear extension $L\in \Ec(P)$. On the other hand,
the $q$-analogue \ts $\aF_q(k,\ell)$ \ts defined in the introduction,
depends on the chain partition \ts $(\Cr_1,\Cr_2)$ \ts as a polynomial.

\subsection{} \label{ss:finrem-GYY}
The Graham--Yao--Yao (GYY) inequality~\eqref{eq:GYY} is less known than the other
poset inequalities in this paper, and can be viewed as an ultimate positive correlation
inequality for posets of width two.  Curiously, the original proof also
used lattice paths; the authors acknowledged Knuth for simplifying it.
A different proof using the powerful FKG inequality was given by
Shepp~\cite{She}.  It would be interesting to see if another Shepp's
inequality \cite[Thm~2]{She} can also be derived from the CPC.

\subsection{} \label{ss:finrem-equality}
The equality part for the Stanley inequality~\eqref{eq:stanley} was recently
characterized in~\cite[Thm~15.3]{SvH} for all posets, as an application a
difficult geometric argument.  In notation of Corollary~\ref{c:logconcavity-Stanley},
they show that \. $\qbr_x(i)^2  \. = \. \qbr_x(i-1) \. \qbr_x(i+1)$ \.
if and only if \. $\qbr_x(i-1) = \qbr_x(i)= \qbr_x(i+1)$.  In~\cite{CP},
the first two authors extend this result to weighted linear extensions, 
but the weights there are quite different from the weights in~\eqref{eq:def-weight}. 

In~\cite{CPP2}, we extend the above equality conditions of Stanley's inequality 
to the equality conditions of the Kahn--Saks inequality~\eqref{eq:KS-ineq}, 
but only in a special case.  In the notation of Theorem~\ref{t:KS}, we prove that the equality \. 
$\aF(k)^2 \. = \. \aF(k-1) \. \aF(k+1)$ \. 
implies \. $\aF(k-1) \. = \. \aF(k) \. = \. \aF(k+1)$ \. 
for posets~$P$ of width two, 
and when elements \. $x,y\in X$ \. belong to the same chain in a partition 
of $P$ into two chains \ts $(\Cr_1, \Cr_2)$.
As we mentioned above, this result
does not follow from our Cross--Product Equality Theorem~\ref{t:cpc_equality}.

On the other hand, Theorem~\ref{t:cpc_equality}
does not hold for all posets. Indeed, let \. $P=C_m+C_m+ C_1$
be the disjoint sum of three chains of size \ts $m$, \ts $m$ \ts and~$\ts 1$, respectively,  
where \ts $m\ge 3$.  Denote these chains by
$\Cr_1:=\{\al_1,\ldots, \al_m\}$, \.
$\Cr_2:=\{\beta_1,\ldots, \beta_m\}$, \. $\Cr_3:=\{\gamma\}$.
Let \. $x:=\al_1$, \. $y:=\gamma$, \. $z:=\beta_m$.
 Then we have:
 \[  \aF(i,j) \ = \ 2^{i+j-2} \qquad \text{ for all  } \ i,j \geq 1 \ \text{ and } \ i+j \leq m+1.  
 \]
Fix \ts $k,\ell \geq 1$ \ts such that \ts $k+\ell \le m-1$. The cross--product 
equality~\eqref{eq:cpc_equality} holds in this case:
\[  
\aF(k,\ell) \ \aF(k+1,\ell+1) \ = \  \aF(k+1,\ell) \ \aF(k,\ell+1) \ = \ 2^{2k+2\ell-2},  
\]
 but neither of the conditions \. {\small \bf (a)}, {\small \bf (b)}, {\small \bf (c)}, 
 {\small \bf (d)} \. in Theorem~\ref{t:cpc_equality} applies.

Let us also mention that by using the argument in $\S$\ref{ss:power-KS},
one can transform the example above into an equality case of Kahn--Saks inequality~\eqref{eq:KS-ineq}
for which \. $\aF(k)^2= \aF(k+1) \aF(k+1)$ \. but $\aF(k) \neq \aF(k+1) \neq \aF(k-1)$, 
see \cite[Ex.~1.5]{CPP2} for further details.

%
%It would be interesting to see if the technology in~\cite{SvH} can be used
%to generalize our results:
%
%\begin{conj}\label{conj:CPC-equality-all-posets}
%Theorem~\ref{t:cpc_equality} extends to all finite posets.
%\end{conj}

\subsection{} \label{ss:finrem-XYZ}
The remarkable $XYZ$ inequality~\eqref{eq:XYZ} was originally conjectured by
Rival and Sands (1981) and soon after proved by Shepp~\cite{She-XYZ} by a
delicate use of the FKG inequality.  To quote the original paper, the
$XYZ$ inequality is ``surprisingly difficult to prove in spite of much
effort by combinatorialists'' (ibid.)  Winkler shows in~\cite{Win1} that
in some sense all correlation inequalities of a certain type must follow
from the $XYZ$ inequality.

Curiously, when $x,y$ and~$z$ form an antichain, the $XYZ$
inequality~\eqref{eq:XYZ} is always strict. This was proved by
Fishburn~\cite{Fish} with an explicit lower bound on the ratio.
Applying this result to \eqref{eq:xyzdifference} for posets of width two,
we see that the sum in the right side of \eqref{eq:xyzdifference}
is always strictly positive.  This implies that one can always find \ts
$i,j<0$ \ts and \ts $k,\ell>0$, such that the
inequality~\eqref{eq:CP-ineq-gen-even-more} is strict.
For example, for  the poset \ts $P=C_m+C_m+C_1$ \ts as above,
strict inequality occurs for \. $i=j=-1$ and $k=\ell=2$, since $\aF(i,j)=0$ and $\aF(i,\ell), \aF(k,j)>0$. 
Note that this is not the only instance of strict inequalities in this example.

Finally, let us mention~\cite{BT} which shows the difficulty of the
equality problem in a small special case of a related problem.  
We also refer to a somewhat dated survey~\cite{Win2},
where Winkler emphasizes the importance of finding strict inequalities.

\vskip.7cm
	
\subsection*{Acknowledgements}
We are grateful to June Huh, Jeff Kahn and Nati Linial for interesting conversations.  
We thank Yair Shenfeld and Ramon van Handel for telling us about~\cite{SvH}, and for
helpful remarks on the subject. Additionally, we thank Ramon van Handel and Alan Yan
for the Example~1.5 in~\cite{CPP2}, which inspired our counterexample 
in~$\S$\ref{ss:finrem-equality}. 
Special thanks to Tom Trotter for suggesting we look into the cross--product
conjecture.  
We also thank Jacob~B.~Zhang for careful reading of the paper and for providing
the counterexample in \S\ref{ss:finrem-total}.
 The last two authors were partially supported by the NSF.

\vskip1.1cm

%\newpage

%\
%
% \newpage

\

\vskip.7cm

\end{document}